\theoremstyle{plain}
\newtheorem{thm}{Theorem}[section]
\newtheorem{lem}[thm]{Lemma}
\newtheorem{prop}[thm]{Proposition}
\newtheorem{cor}[thm]{Corollary}
\theoremstyle{definition}
\newtheorem{defi}[thm]{Definition}
\newtheorem{question}[thm]{Question}
\theoremstyle{remark}
\newtheorem{eg}[thm]{Example}
\newtheorem{rmk}[thm]{Remark}
\newtheorem{setting}[thm]{Setting}
\def\Z{{\mathbb Z}}
\def\F{{\mathbb F}}
\def\C{{\mathbb C}}
\def\A{{\mathbb A}}
\def\R{{\mathbb R}}
\def\Q{{\mathbb Q}}
\def\P{{\mathbb P}}
\def\E{\mathcal{E}}
\def\GG{\mathcal{G}}
\def\O{\mathcal{O}}
\def\OO{\mathcal{O}}
\def\LL{\mathcal{L}}
\def\M{\mathcal{M}}
\def\a{\alpha}
\def\g{\gamma}
\def\d{\delta}
\def\f{\phi}
\def\ep{\epsilon}
\def\l{\lambda}
\def\m{\mu}
\def\r{\rho}
\def\x{\xi}
\def\D{\Delta}
\def\G{\Gamma}
\def\S{\Sigma}
\def\.{\cdot}
\def\~{\widetilde}
\def\^{\widehat}
\def\o{\circ}
\def\ov{\overline}
\def\rat{\dashrightarrow}
\def\inj{\hookrightarrow}
\def\lrd{\lfloor}
\def\rrd{\rfloor}
\renewcommand{\and}{ \ \, \text{and} \ \, }
\newcommand{\fall}{ \quad \text{for all} \ \, }
\DeclareMathOperator{\codim} {codim}
\DeclareMathOperator{\im} {Im}
\DeclareMathOperator{\NE} {NE}
\DeclareMathOperator{\CNE} {\ov{\NE}}
\DeclareMathOperator{\Amp} {Amp}
\DeclareMathOperator{\Proj} {Proj}
\DeclareMathOperator{\PEff} {PEff}
\DeclareMathOperator{\Pic} {Pic}
\DeclareMathOperator{\Ex} {Ex}
\DeclareMathOperator{\ord} {ord}
\DeclareMathOperator{\Div} {Div}
\DeclareMathOperator{\Nef} {Nef}
\DeclareMathOperator{\Supp} {Supp}
\DeclareMathOperator{\Vol} {Vol}
\DeclareMathOperator{\Cl} {Cl}
\DeclareMathOperator{\SBs} {SBs}
\DeclareMathOperator{\Bs} {Bs}
\DeclareMathOperator{\GN}{\mathcal{GN}}
\DeclareMathOperator{\GNE}{\mathcal{GNE}}
\DeclareMathOperator{\GMov}{\mathcal{GM}ov}
\DeclareMathOperator{\GPEff}{\mathcal{GPE}ff}
\DeclareMathOperator{\GNef}{\mathcal{GN}ef}
\DeclareMathOperator{\Mov} {Mov}
\title{Deformations of canonical pairs and Fano varieties}
\author{Tommaso de Fernex}
\address{Department of Mathematics, University of Utah, 155 South 1400 East,
Salt Lake City, UT 48112-0090, USA}
\email{defernex@math.utah.edu}
\author{Christopher D. Hacon}
\address{Department of Mathematics, University of Utah, 155 South 1400 East,
Salt Lake City, UT 48112-0090, USA}
\email{hacon@math.utah.edu}
\thanks{The first author was partially supported by NSF
CAREER grant no: 0847059. The second author was partially
supported by NSF research grant no: 0757897
and an AMS Centennial fellowship.}
\subjclass[2000]{Primary: 14B07, 14J45; Secondary: 14J10, 14J17, 14M30}
\keywords{Deformations, singularities of pairs, extension theorems,
Fano varieties, Mori dream spaces, toric varieties}
\begin{document}

\begin{abstract}
This paper is devoted to the study of various aspects of deformations of log pairs,
especially in connection to questions related to the invariance of
singularities and log plurigenera.
In particular, using recent results from the minimal
model program, we obtain an extension theorem for adjoint divisors
in the spirit of Siu and Kawamata
and more recent works of Hacon and M$^{\rm c}$Kernan.
Our main motivation however comes from the study of deformations of Fano varieties.
Our first application regards the behavior of Mori chamber decompositions
in families of Fano varieties: we prove that, in the case of mild singularities,
such decomposition is rigid under deformation when the dimension is small.
We then turn to analyze deformation properties of toric Fano varieties,
and prove that every simplicial toric Fano variety with at most
terminal singularities is rigid under deformations
(and in particular is not smoothable, if singular).
\end{abstract}

\maketitle

\section{Introduction}

In \cite{Siu98,Siu02}, Siu proved that
if $f \colon X \to T$ is a smooth 1-parameter
family of projective manifolds, then the plurigenera
of the fibers $X_t = f^{-1}(t)$ are constant functions of $t \in T$.
This result and its proof have been extremely influential in the field.
First proven in the analytic setting, Siu's result was later understood
in the algebraic language (at least in the case of general type)
by Kawamata \cite{Kaw99a,Kaw2}.
Kawamata also pointed out that using Siu's techniques one can show that
canonical singularities are preserved under
small deformations. An analogous result on terminal singularities
was obtained in \cite{Nak04}. These singularities
arise naturally in the context of the minimal model program
(but one should be aware that there are other classes of singularities
of the minimal model program that are not preserved under small deformations).

All these results are consequences of certain extension properties
of pluricanonical forms from a divisor to the ambient variety.
For example, if $X_0 = f^{-1}(0)$ is a fiber of a morphism $f$, then
one must study the surjectivity of the restriction maps
$$
H^0(X,\O_X(mK_X)) \to H^0(X_0,\O_{X_0}(mK_{X_0})).
$$
Extension theorems of this type have opened the door to recent
progress in higher dimensional geometry
(see \cite{HM06,Tak07,HM07,BCHM,HM08}).

From the point of view of the minimal model program it is natural to look at
{\it pairs} $(X,D)$, where $X$ is a variety and
$D = \sum a_i D_i$ is an effective $\Q$-divisor. These pairs arise naturally
in a geometrically meaningful way in a variety of instances,
and the various notions of singularities immediately extended
to analogous notions for pairs.

The first part of this paper
is devoted to the study of several properties related to deformations of pairs.
We start with some basic properties
on the deformation invariance of their singularities
that generalize to pairs the aforementioned results of Kawamata and Nakayama
(cf. Proposition~\ref{c-ext}),
and apply these results to study small deformations of
log Fano and weak log Fano varieties (cf. Proposition~\ref{thm:deform-log-fanos}).

We then address the extension problem for line bundles of the form
$\O_X(m(K_X + D))$. Thinking of $D$ as a sort of {\it boundary} of $X$, one can
consider the sections of these line bundles as a kind of log pluricanonical forms
(of course at this level of generality this is just for the purpose of
intuition).
Remarkable extension results for these kinds of sections
have been recently obtained in \cite{HM07,BCHM,HM08}
and applied towards the minimal model program.
One should be aware that these extension results are not straightforward
generalizations: the presence
of a boundary $D$ may in fact affect substantially
the extendability of the sections (cf. Remark~\ref{rmk:F_2-ext}),
which explains the appearance in the mentioned extension results of
certain conditions on stable base loci.

Using the techniques of the minimal model program and the results
established in \cite{BCHM}, we obtain the following extension theorem,
in which we replace a technical condition on the stable base locus
of $K_X + D$ (appearing in \cite{HM07,HM08} and other works)
with a certain positivity condition. The reader will find in Theorem~\ref{t-extf}
a statement including also a version of the result
where instead of the positivity conditions, the usual
condition on the stable base locus of $K_X + D$ is imposed.

\begin{thm}\label{thm:intro:extension}
With the above notation, assume that $X_0$ is $\Q$-factorial and $(X_0,D|_{X_0})$
is a Kawamata log terminal pair with canonical singularities.
Assume that either $D|_{X_0}$ or $K_{X_0} + D|_{X_0}$ is big.
Suppose furthermore that the restriction map
on N\'eron--Severi spaces $N^1(X/T) \to N^1(X_0)$ is surjective,
and that there is a number $a > -1$ such that $D|_{X_0} - aK_{X_0}$ is ample.

Let $L$ be an integral Weil $\Q$-Cartier divisor whose support of $L$ does not contain
$X_0$ and such that $L|_{X_0} \equiv  k(K_X+D)|_{X_0}$ for some rational number $k>1$.
Then, after possibly shrinking $T$ near $0$, the restriction map
$$
H^0(X,\OO _X(L))\to H^0(X_0,\OO _{X_0}(L|_{X_0}))
$$
is surjective.
\end{thm}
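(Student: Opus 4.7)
The plan is to reduce the statement to the extension theorem for adjoint divisors in its stable base locus formulation (which is part of Theorem~\ref{t-extf}) and to use the positivity and $N^1$-surjectivity assumptions to arrange the stable base locus hypothesis. I would proceed in three steps.

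First, exploit the surjectivity of $N^1(X/T) \to N^1(X_0)$. The numerical equivalence $L|_{X_0} \equiv k(K_X + D)|_{X_0}$ on $X_0$ lifts, after shrinking $T$ near $0$, to an equivalence $L \equiv k(K_X + D) + N$ on $X$, where $N$ is a $\Q$-Cartier divisor whose restriction to $X_0$ is numerically trivial. By incorporating $N/k$ into a $\Q$-linearly equivalent modification of the boundary $D$, and shrinking $T$ further so that $N$ becomes numerically trivial relative to $T$, one reduces the problem to the case where $L \equiv k(K_X + D)$, so that the goal becomes the extension of pluri-log-canonical sections from $X_0$ to $X$.

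Second, apply Proposition~\ref{c-ext} to $(X_0, D|_{X_0})$ and to $X_0$: after possibly shrinking $T$, the pair $(X, D)$ is klt and $X$ has canonical singularities in a neighborhood of $X_0$. The positivity condition that $D|_{X_0} - aK_{X_0}$ be ample for some $a > -1$ provides a decomposition $D|_{X_0} \equiv aK_{X_0} + A_0$ with $A_0$ ample. Combined with the bigness of either $D|_{X_0}$ or $K_{X_0} + D|_{X_0}$ and the klt hypothesis, this allows one to produce, for each small $\epsilon > 0$, a boundary $\Delta_\epsilon \sim_\Q D$ such that $(X_0, \Delta_\epsilon|_{X_0})$ is klt and $X_0 \not\subset \SBs(K_X + \Delta_\epsilon)$. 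The key role of the condition $a > -1$ is precisely that it permits such a perturbation: it ensures that a positive multiple of $K_{X_0} + D|_{X_0}$ splits as an ample class plus a controlled $K_{X_0}$-term, so that by the finite generation results of \cite{BCHM} applied relatively over $T$, the log canonical ring $\bigoplus_m f_*\O_X(m(K_X + \Delta_\epsilon))$ is finitely generated and contains sections whose restrictions to $X_0$ are nonzero.

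Third, apply the stable-base-locus version of the extension theorem (Theorem~\ref{t-extf}) to the perturbed pair $(X, \Delta_\epsilon)$ to obtain the surjectivity of the restriction map for a sufficiently divisible multiple of $K_X + \Delta_\epsilon$. Passing to the limit $\epsilon \to 0$, and using Step~1 to translate back from $k(K_X + D)$ to the original $L$, yields the desired surjectivity of $H^0(X, \O_X(L)) \to H^0(X_0, \O_{X_0}(L|_{X_0}))$.

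The main obstacle I expect is Step~2, namely turning the numerical positivity $a > -1$ together with the bigness hypothesis into the geometric statement that $X_0 \not\subset \SBs(K_X + \Delta_\epsilon)$ after an arbitrarily small perturbation of the boundary. This requires the full strength of \cite{BCHM} in the relative setting to guarantee finite generation of the relative log canonical algebra, together with a careful analysis, via the deformation invariance of singularities in Proposition~\ref{c-ext}, of how the base loci of the generators intersect $X_0$.
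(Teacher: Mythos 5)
Your proposal does not match the paper's argument, and Step~2 has a genuine gap that makes the reduction from case~(b) to case~(a) of Theorem~\ref{t-extf} unavailable.

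The paper proves Theorem~\ref{thm:intro:extension} as case~(b) of Theorem~\ref{t-extf}, and the mechanism is a relative minimal model program, not a perturbation argument. Concretely: one runs a $(K_X+D)$-MMP over $T$ \emph{with scaling of $D - aK_X$}. Because each step is $(K_{X^i}+D^i)$-negative and $\bigl((K_{X^i}+D^i) + \lambda_i(D^i - aK_{X^i})\bigr)$-trivial, the identity
$$
\lambda_i(1+a)K_{X^i} = (1+\lambda_i)(K_{X^i}+D^i) - \bigl((K_{X^i}+D^i) + \lambda_i(D^i - aK_{X^i})\bigr)
$$
shows each step is in fact $K_{X^i}$-negative; this is exactly where $a>-1$ enters, and it is what makes the second alternative of Theorem~\ref{p-1}(c) applicable so that the MMP on $X$ restricts to an MMP on $X_0$. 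After termination, one reduces the extension problem to the minimal model $X'$, where the surjectivity follows from a short exact sequence plus a Kawamata--Viehweg-type vanishing (using Lemma~\ref{lem:nef-big-open}). None of this passes through the stable-base-locus case~(a).

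The concrete problem with your Step~2 is twofold. First, you propose replacing $D$ by $\Delta_\epsilon \sim_\Q D$; but the stable base locus depends only on the $\Q$-linear equivalence class, so $\SBs(K_X+\Delta_\epsilon) = \SBs(K_X+D)$ and no perturbation of this kind can shrink it. If you instead meant moving $\Delta_\epsilon$ to a general member so that $\Supp(\Delta_\epsilon|_{X_0})$ avoids the (unchanged) stable base locus, you would still need $(X_0,\Delta_\epsilon|_{X_0})$ to remain a klt pair with canonical singularities, which a general member need not satisfy when $X_0$ is singular or when the linear system has base points. Second, even if the perturbation were legitimate, the condition you aim for, $X_0 \not\subset \SBs(K_X+\Delta_\epsilon)$, is far weaker than the hypothesis of case~(a), which requires that $\SBs(K_X+D)$ contain no \emph{component} of $\Supp(D|_{X_0})$. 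The latter is precisely what controls the divisorial contractions of the restricted MMP (see the first alternative of Theorem~\ref{p-1}(c)), and it does not follow from the weaker non-containment of $X_0$. Your Step~1 is also not needed: the paper never lifts the numerical equivalence $L|_{X_0} \equiv k(K_X+D)|_{X_0}$ to $X$, and works throughout with $L$ as given, using only the numerical equivalence on the central fiber to verify the positivity hypotheses for the final vanishing on $X'$.
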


As we shall see, allowing to work with Weil divisors will be essential in
the application of this result to deformations of toric varieties.

Theorem~\ref{thm:intro:extension} is based upon another result, Theorem~\ref{p-1},
which essentially states that, under suitable conditions,
each step (a flip or a divisorial contraction) of
a relative minimal model program on a family $f \colon X \to T$
induces the same kind of step in the minimal model program of a fiber of $f$.

It turns out that there is a surprising connection between these results
and certain rigidity properties of Fano varieties under deformation.
According to Mori Theory, the Mori cone of effective curves of a Fano variety
encodes information on the geometry of the variety.
In fact, it follows by results in \cite{BCHM} that
a Fano variety with mild singularities is a {\it Mori dream space}
in the sense of \cite{HK}. In particular, the subcone of the
Ner\'on--Severi space of the variety generated by movable divisors
admits a finite polyhedral decomposition into {\it Mori chambers}.
One of these chambers is the nef cone.
This chamber decomposition retains information on the
birational geometry of the variety in terms of its small $\Q$-factorial
birational modifications (cf. \cite{HK}).

One would like to understand the behavior of Mori chamber decompositions
in families of Fano varieties with mild singularities. In the following, let
$$
f \colon X \to T
$$
be a flat projective deformation, parametrized
by a smooth pointed curve $T \ni 0$,
of a Fano variety $X_0 = f^{-1}(0)$ with $\Q$-factorial
terminal singularities.

When $f$ is a smooth family of Fano manifolds,
it follows from a result on nef values due to Wi\'sniewski \cite{Wis,Wis2}
that the nef cone is locally constant in the family.
Wi\'sniewski's result is of a topological
nature, relying on the Hard Lefschetz Theorem and
the fact that the family is topologically locally trivial.
On the other hand, it appears that one is forced to allow singularities in order
to study how the whole Mori chamber decomposition varies in the deformation.

We start by investigating to which extent Wi\'sniewski's theorem on the
deformation of nef values (cf.  \cite{Wis})
can be generalized to the context of canonical log pairs.
In this direction, we obtain an optimal result for families of pairs
of dimension at most three. By contrast,
there are interesting examples of birational
hyperk\"ahler manifolds of dimension four that force us to impose
additional conditions in order to ensure the constancy of nef values
in such general setting when the dimension is greater than three
(cf. Example~\ref{eg:Huybrechts}
and Remark~\ref{rmk:hyperkahler}).
We refer the reader to Theorem~\ref{t-nef} and Corollary~\ref{cor:nef-value}
for the precise statements of our results on nef values.

Before even addressing the question on the deformation of Mori chamber decompositions,
one needs to make sure that the Ner\'on--Severi spaces deform continuously.
Since the topology may vary in the family once singularities are allowed,
even the fact that the Picard number of the fibers is constant is
a priori not obvious. We deduce this fact from a property
established in \cite{KM92} (cf. Proposition~\ref{p-N_1}).

It was pointed out by Lazarsfeld
and Musta\c t\u a that the pseudo-effective cone of $X_0$
is locally constant under deformation.
In a similar fashion, we have the following result (cf. Theorem~\ref{prop:movable-cones}).

\begin{thm}
With the above notation, the cone of movable divisors of $X_0$,
which supports the Mori chamber decomposition, is locally constant.
\end{thm}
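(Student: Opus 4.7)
The strategy is to combine the Mori dream space structure of $\Q$-factorial terminal Fano varieties (a consequence of \cite{BCHM}) with the MMP lifting result Theorem~\ref{p-1}. Because being Fano with $\Q$-factorial terminal singularities is preserved under small deformation (Proposition~\ref{thm:deform-log-fanos}), every nearby fiber $X_t$ is also a Mori dream space. By Proposition~\ref{p-N_1} together with the surjectivity of $N^1(X/T) \to N^1(X_0)$, we canonically identify $N^1(X_t) \cong N^1(X_0)$. Under this identification, the plan is to prove $\ov{\Mov}(X_0) = \ov{\Mov}(X_t)$ for $t$ near $0$.

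The movable cone of a Mori dream space is a finite union of chambers, each the image of the nef cone of a small $\Q$-factorial modification (SQM) $\phi_i \colon X_0 \dashrightarrow Y_i^0$, and each such $Y_i^0$ arises as the output of a suitable $(K_{X_0}+\Delta_i)$-MMP on $X_0$. First I would fix such an SQM, pull back an ample class on $Y_i^0$ to a movable class $B_i^0$ on $X_0$, and lift $B_i^0$ to a $\Q$-divisor $\tilde B_i$ on $X$ using the surjective restriction of N\'eron--Severi spaces. Next I would run a relative MMP on $X/T$ directed by a suitable perturbation of $\tilde B_i$, exploiting the ampleness of $-K_{X_0}$ to cast this as an MMP for an adjoint divisor $K_X+\tilde\Delta_i$ whose existence is provided by \cite{BCHM}. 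By repeated application of Theorem~\ref{p-1}, each flip and divisorial contraction in this relative MMP restricts to the corresponding step of the MMP on the central fiber, so the output is a flat family $Y_i \to T$ with central fiber $Y_i^0$ and nearby fibers $Y_i^t$ that are SQMs of $X_t$.

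Applying the deformation invariance of nef cones (Corollary~\ref{cor:nef-value}) to each $Y_i \to T$ then yields that the chamber contributed by $Y_i^t$ is locally constant in $t$. Summing over the finitely many SQMs of $X_0$ gives the inclusion $\ov{\Mov}(X_0) \subseteq \ov{\Mov}(X_t)$. The reverse inclusion follows by the symmetric argument centered at $t$: each chamber of $\ov{\Mov}(X_t)$ comes from an SQM of $X_t$, which lifts via the same procedure over a neighborhood of $t$ to a family of SQMs whose fiber at $0$ produces the matching chamber of $\ov{\Mov}(X_0)$. Combined with the fact that the pseudo-effective cone is locally constant (the Lazarsfeld--Musta\c t\u a remark), this forces equality $\ov{\Mov}(X_0) = \ov{\Mov}(X_t)$.

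The main obstacle lies in the SQM-lifting step: one must arrange the relative MMP so that it terminates and so that at each intermediate stage the hypotheses of Theorem~\ref{p-1}, particularly the positivity and fiber non-containment conditions on the directing divisor, are verified. The ampleness of $-K_{X_0}$ is the crucial ingredient making this possible: it gives enough room to choose $\tilde B_i$ (and its perturbation to adjoint form) so that the resulting boundary restricts to a Kawamata log terminal pair with canonical singularities on the central fiber at every intermediate model, as required for Theorem~\ref{p-1} to apply inductively.
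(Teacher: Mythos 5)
Your plan, while built on the right tools (Theorem~\ref{p-1}, the Mori dream space structure from \cite{BCHM}, and the ample anticanonical giving freedom to form klt adjoint directing divisors), diverges from the paper's proof in a way that introduces a genuine gap.

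The first problem is the appeal to Corollary~\ref{cor:nef-value}. That corollary is \emph{not} unconditional: it holds only in relative dimension $\le 3$, or in dimension $4$ under the $1$-canonical hypothesis, or when the central fiber satisfies the volume criterion for ampleness. By routing the proof through the local constancy of each chamber $\Nef(Y_i^t)$ via Corollary~\ref{cor:nef-value}, you are essentially reproving Theorem~\ref{thm:mori-chamber-dec}, which carries exactly those dimension and singularity restrictions (and, by Totaro's examples, necessarily so). But Theorem~\ref{prop:movable-cones} is stated and proved with no dimension hypothesis: the movable cone is locally constant in every dimension. A proof that passes through Corollary~\ref{cor:nef-value} cannot yield the full statement. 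For the inclusion $\Mov^1(X_0) \subseteq \Mov^1(X_t)$ you never needed the full strength of chamber constancy anyway: Theorem~\ref{t-extf} already gives that every movable linear system on $X_0$ stays movable on nearby fibers, since the extended sections have no divisorial base component, and this works in any dimension.

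The second and more serious problem is the sentence ``the reverse inclusion follows by the symmetric argument centered at $t$.'' The roles of $0$ and $t$ are not symmetric: $X_0$ is the special fiber that may degenerate, and $X_t$ is general. Running your SQM-lifting procedure over a neighborhood of $t$ gives information over a Zariski-open set of parameters, which a priori excludes $0$; and there is no reason why the limit at $0$ of a family of SQMs of $X_t$ should be an SQM of $X_0$ rather than a model reached after a divisorial contraction. This containment $\Mov^1(X_t) \subseteq \Mov^1(X_0)$ is precisely the hard direction. The paper proves it by contradiction: take $L$ with $L|_{X_t}$ in the interior of $\Mov^1(X_t)$ for $t\ne 0$ but $L|_{X_0} \not\in \Mov^1(X_0)$; write $K_X+D\sim_\Q \lambda L$ with $(X,D)$ klt and canonical; run the $(K_X+D)$-MMP over $T$ scaled by $D-aK_X$ with $a\gg 0$. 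The positivity of $-K_X$ forces the hypothesis of Theorem~\ref{p-1}(c) at every stage, so flips on $X$ restrict to flips on $X_0$, and any divisorial contraction on $X$ would contract a divisor dominating $T$ and therefore induce a divisorial contraction on the general fiber, contradicting movability of $L|_{X_t}$. Hence the induced MMP on $X_0$ consists entirely of flips, and so cannot contract the divisorial part of the base locus of $L|_{X_0}$ --- a contradiction. This argument never invokes Corollary~\ref{cor:nef-value} and works in all dimensions. You should replace the symmetry claim with this MMP-by-contradiction step, and drop the dependence on Corollary~\ref{cor:nef-value} so that the proof has the full generality of the theorem.
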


Regarding the behavior of the Mori chamber decomposition itself, we obtain
the following (cf. Theorem~\ref{thm:mori-chamber-dec};
see Definition~\ref{def:1-can} for the definition of {\it $1$-canonical}).

\begin{thm}\label{thm:intro:Mori-chambers}
With the above notation, suppose that either
\begin{enumerate}
\item
$\dim X_0 \le 3$, or
\item
$\dim X_0 = 4$ and $X_0$ is 1-canonical (e.g., $K_{X_0}$ is Cartier).
\end{enumerate}
Then, locally near $0$, the Mori chamber decomposition of $X_0$ is preserved unaltered
by the deformation.
\end{thm}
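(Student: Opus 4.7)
My plan is to reduce the theorem to two complementary tasks: (a) set up a bijection between small $\Q$-factorial modifications (SQMs) of the central fibre $X_0$ and those of nearby fibres $X_t$, realised by deforming each SQM over $T$; and (b) show that under this bijection, corresponding nef cones coincide inside the common N\'eron--Severi space. Since the Mori chamber decomposition of a Mori dream space is the decomposition of the movable cone (which is locally constant in the family by Theorem~\ref{prop:movable-cones}) into the nef cones of its finitely many SQMs, (a) and (b) together yield the conclusion.

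First I would apply Proposition~\ref{thm:deform-log-fanos} to shrink $T$ so that every fibre $X_t$ is a $\Q$-factorial terminal Fano variety, and combine Proposition~\ref{p-N_1} with the surjectivity $N^1(X/T) \to N^1(X_0)$ to obtain canonical identifications $N^1(X_0) \cong N^1(X_t)$ for $t$ near $0$. Since $X_0$ is a Mori dream space by \cite{BCHM}, it has only finitely many SQMs $g_{0,i} \colon X_0 \dashrightarrow Y_{0,i}$. To extend $g_{0,i}$ to a family, I would pick an ample divisor $H_{0,i}$ on $Y_{0,i}$, lift it via the N\'eron--Severi surjection to a $\Q$-divisor $H_i$ on $X$, and run a relative minimal model program on $X/T$ with scaling by $H_i$ (starting from a suitably chosen ample perturbation so that, on the central fibre, one recovers the MMP on $X_0$ producing $Y_{0,i}$). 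The decisive input is Theorem~\ref{p-1}: each flip in the relative MMP restricts to the corresponding flip on the central fibre, so the procedure terminates at an SQM $g_i \colon X \dashrightarrow Y_i$ over $T$ whose restriction to $0$ is $g_{0,i}$. Applying the same construction to SQMs of a nearby fibre $X_t$ produces the reverse map and hence the desired bijection.

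For (b), each family $Y_i \to T$ is again a flat projective family whose central fibre is a $\Q$-factorial terminal weak Fano variety birational to $X_0$. Both the dimension bound in case~(1) and the $1$-canonical hypothesis in case~(2) are preserved under SQMs, so the hypotheses of the nef-value deformation theorem are verified on $Y_i \to T$. Applying Corollary~\ref{cor:nef-value} (resp.\ Theorem~\ref{t-nef}) to $Y_i \to T$ yields that the nef cone of $Y_{0,i}$ coincides with that of $Y_{t,i}$ under $N^1(Y_0) \cong N^1(Y_t)$, which, via the natural maps to $N^1(X_0) \cong N^1(X_t)$, translates into the equality of the corresponding chambers inside the movable cones of $X_0$ and $X_t$. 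Summing over the finitely many $i$ delivers the full statement.

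The main obstacle is the extension step (a): forcing the relative MMP on $X/T$ to mirror a prescribed MMP on the central fibre, with termination and the correct fibrewise behaviour. This is exactly what Theorem~\ref{p-1} is designed to supply, and it ultimately rests on the extension Theorem~\ref{thm:intro:extension}. Once (a) is in place, (b) is a direct invocation of the nef-value deformation statement, and it is here that the restriction to $\dim \le 3$, or $\dim = 4$ with $1$-canonical, is essential: outside this range, examples of birational hyperk\"ahler fourfolds such as Example~\ref{eg:Huybrechts} show that nef cones can genuinely jump in a deformation, and then the chamber decomposition would not be preserved.
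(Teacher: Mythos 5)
Your plan is essentially sound and rests on the same two pillars as the paper's proof, namely Theorem~\ref{p-1} (to make the relative MMP on $X/T$ mirror the MMP on $X_0$) and the nef-value invariance of Corollary~\ref{cor:nef-value}, and you correctly identify the hyperk\"ahler fourfold phenomenon as the obstruction in higher dimension. The route, however, is genuinely different in structure. The paper does not attempt a constructive bijection between SQMs of $X_0$ and of nearby fibres. Instead it argues by contradiction: supposing that $\S_0$, the chamber decomposition on the central fibre, has a wall not seen on nearby fibres, it chooses a divisor $\D$ lying on that wall of $X_0$ but in the interior of a chamber on $X_t$, sets $D = \frac 1m H + \D$ with $H$ anticanonical, runs a single $(K_X+D+\ep A)$-MMP over $T$ to a minimal model $Z$, and then observes that $K_Z + \phi_*(D - \ep A)$ restricts to an ample class on $Z_t$ for $t\ne 0$ but a non-nef class on $Z_0$, directly violating Corollary~\ref{cor:nef-value}. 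This is leaner: it avoids having to enumerate and simultaneously deform all SQMs, and it does not need to verify that the inverse assignment from SQMs of $X_t$ back to SQMs of $X_0$ is genuinely inverse to the forward one.

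Two points in your write-up deserve care. First, the central fibre $Y_{0,i}$ of the family $Y_i\to T$ that you produce is \emph{not} in general weak Fano (an SQM of a Fano need only have $-K$ big, not nef); fortunately Corollary~\ref{cor:nef-value} does not ask for this, only for the hypotheses of Setting~\ref{setting}, which hold because $Y_{0,i}$ is $\Q$-factorial terminal and the relative N\'eron--Severi identifications propagate through the flips via Theorem~\ref{p-1}. Second, in step (b) you pass from ``nef values are constant in $t$'' to ``$\Nef(Y_{0,i}) = \Nef(Y_{t,i})$''; this is correct but requires an argument that you only gesture at: openness of ampleness in families gives $\Nef(Y_{0,i}) \subseteq \Nef(Y_{t,i})$, and then to identify the boundary one writes any $L_0 \in \partial\Nef(Y_{0,i})$ as $K_{Y_{0,i}} + A$ for the ample class $A = L_0 - K_{Y_{0,i}}$ and invokes constancy of $\tau_A(K_{Y_{0,i}})=1$. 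Since $Y_{0,i}$ is terminal, the pair $(Y_{0,i},0)$ lies within the scope of Corollary~\ref{cor:nef-value}. With these small repairs your argument goes through; what the paper's contradiction gains is that it only ever needs to run one MMP and inspect one divisor, which keeps the bookkeeping around which conditions persist through the MMP (in particular, that the $1$-canonical hypothesis in case~(ii) is preserved, a point the paper itself only asserts and does not prove) to a minimum.
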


We remark that that the above theorems
fail in general if one just relaxes the condition on singularities
from terminal to canonical, or if $X_0$ is only assumed to be a weak Fano variety
or a log Fano variety (cf. Remark~\ref{eg-controexamples}).

We had originally conjectured in a earlier version of this paper that
the conclusions of Theorem~\ref{thm:intro:Mori-chambers} would hold in general,
without the restrictions on dimension and singularities imposed in~(a) or~(b).
Recently, however, Burt Totaro communicated to us that he has found
counter-examples to such conjecture (cf. \cite{Tot09}).
In particular, in view of Totaro's examples, the additional restrictions
considered in
Theorem~\ref{thm:intro:Mori-chambers} would appear to be optimal.

Our methods also show that, in the setting considered above,
the Mori chamber decomposition is locally invariant if the central fiber
is a toric variety. This is in fact just a hint of a much stronger
rigidity property (we remark, on the other hand, that the corresponding
assertions in Theorem~\ref{t-nef} and Corollary~\ref{cor:nef-value}
are not implied by such rigidity property).
More precisely, by analyzing the induced deformation on the total
coordinate ring of $X_0$, we prove
the following result (cf. Theorem~\ref{thm:rigidity-of-toric}).

\begin{thm}\label{thm:intro:rigidity-of-toric}
Every simplicial toric Fano variety with at most terminal singularities
is rigid under small projective flat deformations (and thus is not
smoothable, if singular).
\end{thm}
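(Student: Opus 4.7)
The plan is to reduce, by standard arguments, to the case where $f \colon X \to T$ is a projective flat deformation of $X_0$ over a smooth pointed curve $(T, 0)$, and to show that such an $f$ is trivial after shrinking $T$ about $0$. The strategy I would follow is to lift the Cox ring of $X_0$ to a relative $\text{Cl}(X)$-graded $\O_T$-algebra on the total space, and then exploit the rigidity of polynomial rings under graded deformations. Using the invariance results established earlier (Propositions~\ref{c-ext} and~\ref{p-N_1} and Theorem~\ref{prop:movable-cones}), we may assume, after shrinking $T$, that the restriction $N^1(X/T) \to N^1(X_0)$ is an isomorphism and that each torus-invariant prime divisor $D_\rho \subset X_0$ ($\rho \in \Sigma(1)$) lifts to a Weil $\Q$-Cartier divisor $\tilde D_\rho$ on $X$ not containing $X_0$.

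Next, letting $x_\rho \in H^0(X_0, \O_{X_0}(D_\rho))$ denote the canonical section cutting out $D_\rho$, I would apply Theorem~\ref{thm:intro:extension} with $L = \tilde D_\rho$ to lift $x_\rho$ to a section $\tilde x_\rho \in H^0(X, \O_X(\tilde D_\rho))$. To verify the hypotheses, one constructs, for each $\rho$, an effective $\Q$-divisor $\Delta_\rho$ on $X_0$ supported on torus-invariant divisors such that $(X_0, \Delta_\rho)$ is klt with canonical singularities (feasible since $X_0$ has terminal singularities), $\Delta_\rho$ is big (using that $-K_{X_0}$ is ample), and $D_\rho \equiv k(K_{X_0} + \Delta_\rho)$ for some rational $k > 1$. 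The positivity condition $\Delta_\rho - a K_{X_0}$ ample for some $a > -1$ then follows from the ampleness of $-K_{X_0}$.

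With all the generators lifted, I would form the $\text{Cl}(X)$-graded $\O_T$-algebra $\mathcal R := \bigoplus_\xi f_* \O_X(L_\xi)$ and consider the sub-algebra $\mathcal R_0 \subset \mathcal R$ generated by the $\tilde x_\rho$. Since $\text{Cox}(X_0) = \C[x_\rho]_{\rho \in \Sigma(1)}$ is a polynomial ring, the $\tilde x_\rho$ are algebraically independent, so $\mathcal R_0 \cong \O_T[\tilde x_\rho]$. Comparing graded ranks using flatness of $f$ (each $f_* \O_X(L)$ is locally free of rank $h^0(X_0, \O_{X_0}(L|_{X_0}))$, which equals the rank of the corresponding graded piece of $\mathcal R_0$) and applying Nakayama's lemma forces $\mathcal R_0 = \mathcal R$. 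The variety $X$ is then recovered as a relative GIT quotient $\Spec_T(\mathcal R) \git G$ for $G = \Hom(\text{Cl}(X_0), \mathbb G_m)$ linearized by an ample class; since $\mathcal R \cong \text{Cox}(X_0) \otimes_\C \O_T$ as graded $\O_T$-algebras with compatible $G$-action, we conclude $X \cong X_0 \times T$. The non-smoothability assertion then follows immediately, for a smoothing would produce a smooth general fiber, contradicting local triviality when $X_0$ is singular.

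The main obstacle will be the application of Theorem~\ref{thm:intro:extension} to each Weil divisor $L = \tilde D_\rho$: one must construct klt canonical pairs $(X_0, \Delta_\rho)$ with $K_{X_0} + \Delta_\rho$ numerically proportional to the typically non-ample Weil divisor $D_\rho$, while keeping the coefficients of $\Delta_\rho$ within the klt canonical range. The flexibility afforded by the toric structure—every class has a torus-invariant representative and $-K_{X_0}$ is ample—is essential here; this is precisely why the extension theorem was formulated for Weil rather than merely Cartier divisors. A secondary concern is ensuring that the comparison of graded ranks in the Cox ring step remains valid throughout the flat family, which requires careful use of semi-continuity and the previously established constancy of numerical invariants.
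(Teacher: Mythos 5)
Your overall strategy---lift the Cox ring to a relative $\Cl(X/T)$-graded $\O_T$-algebra, extend the generators $x_\rho$ using the extension theorem, and exploit the rigidity of polynomial rings---is indeed the same strategy the paper uses. However, there are two genuine gaps.

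The most serious is the lifting of Weil divisor classes. You assert, citing Propositions~\ref{c-ext} and~\ref{p-N_1} and Theorem~\ref{prop:movable-cones}, that after shrinking $T$ each torus-invariant prime divisor $D_\rho$ lifts to a Weil $\Q$-Cartier divisor $\tilde D_\rho$ on $X$. But those results only control the N\'eron--Severi spaces, i.e., Cartier divisors modulo numerical equivalence. On a simplicial (hence $\Q$-factorial but typically not factorial) toric Fano with terminal singularities, the $D_\rho$ are in general non-Cartier, $\Cl(X_0)/\Pic(X_0)$ is a nontrivial finite group, and the class $[D_\rho] \in \Cl(X_0)$ need not come from $\Pic$. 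Knowing that some Cartier multiple $n D_\rho$ lifts does not allow one to divide by $n$ inside $\Cl(X/T)$. What is actually needed is that the restriction $\Cl(X/T) \to \Cl(X_0)$ is an isomorphism after base change, and this is precisely the content of Lemma~\ref{lem:Cl} in the paper, which requires a substantial argument of its own: cut by $\dim X - 3$ general hyperplanes to a smooth surface family (using terminality to ensure the singular locus has codimension $\ge 3$), invoke the Koll\'ar--Mori local system $\GN^1(S/T)$ and the vanishing $H^1(\O_{S_t})=0$ to show that first Chern classes of Weil divisors on $X_0$ extend to nearby fibers via $\Pic(S_t) \hookrightarrow H^2(S_t,\Z)$, a Hilbert-scheme argument to promote this to an algebraic family, and the Grothendieck--Lefschetz theorem of Ravindra--Srinivas to descend from $\Cl(S/T)$ back to $\Cl(X/T)$. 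Without this, you cannot even formulate the relative Cox ring, let alone lift its generators.

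A secondary error is in your construction of the auxiliary boundaries $\Delta_\rho$. You propose taking $\Delta_\rho$ supported on the torus-invariant divisors with $D_\rho \equiv k(K_{X_0} + \Delta_\rho)$ for some $k > 1$; this forces $\Delta_\rho \equiv \tfrac{1}{k}D_\rho - K_{X_0}$, whose degree against an ample class is at least that of $-K_{X_0}$. Since $-K_{X_0}$ equals the full toric boundary, the required coefficients of $\Delta_\rho$ on the $D_\sigma$ must be close to $1$ (and in examples such as $\P(1,1,1,2)$ cannot even stay below $1$ while keeping the pair canonical), so $(X_0,\Delta_\rho)$ will not be klt with canonical singularities. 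The fix, implicit in the paper, is to take $D$ of the form $\tfrac{1}{m}H$ for a general member $H$ of a large multiple of the relevant ample class: such a $D$ has arbitrarily small coefficients and generic support, so $(X_0,D|_{X_0})$ remains terminal, $D|_{X_0}$ is big, and $D|_{X_0}-aK_{X_0}$ is ample for $a=0$. Keeping $D$ torus-invariant is both unnecessary and, as above, incompatible with the canonicity hypothesis of Theorem~\ref{t-extf}.
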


In the case of smooth toric Fano varieties, this theorem recovers
a special case of a result of Bien and Brion \cite{BB96}.
One should contrast this result with several known examples of
degenerations to (singular) toric Fano varieties,
notably of Grassmannians, flag varieties
and other moduli spaces (e.g., see
\cite{Str93,Str95,Lak95,GL96,BCFKvS00,Bat04,AB04,HMSV08}).
In particular, the degenerations studied in \cite{Bat04},
for instance, show how rigidity
fails if one drops the hypothesis on $\Q$-factoriality, and
there are simple examples of smoothable $\Q$-factorial toric Fano varieties
with canonical singularities and of non-rigid smooth toric varieties
that are weak Fano or log Fano (cf. Remark~\ref{rmk:F_2-toric}).
One should also bear in mind the fact that all
Fano 3-folds with Gorenstein terminal singularities (as well as all weak Fano
$\Q$-factorial 3-folds with Gorenstein terminal singularities) can be smoothed
(see \cite{Nam97,Min01}). This does not contradict the above
theorem, as in dimension three
all $\Q$-factorial terminal Gorenstein toric Fano varieties are already smooth.

\subsection*{Acknowledgments}
We would like to thank
V.~Alexeev,
S.~Boucksom,
M.~Brion,
M.~Hering,
A.~Kasprzyk,
J.~Koll\'ar,
R.~Lazarsfeld,
J.~M\textsuperscript{c}Kernan,
M.~Musta\c t\u a,
S.~Payne
and B.~Totaro
for useful correspondence and conversations.
In particular we are grateful to J\'anos Koll\'ar for bringing
\cite{KM92} to our attention, to Robert Lazarsfeld for bringing
\cite{Wis} to our attention, and to Burt Totaro for informing us of
the results of his preprint \cite{Tot09}.

\section{Notation and conventions}

Throughout this paper we work over the field of complex numbers $\mathbb C$.
A {\it divisor} on a normal variety will be understood to be a Weil divisor.
For a proper morphism of varieties $f \colon X \to S$, we denote
by $N^1(X/Z)$ the Ner\'on--Severi space (with real coefficients)
of $X$ over $Z$, and by $N_1(X/Z)$ its dual space.
Linear equivalence (resp., $\Q$-linear equivalence) between divisors is
denoted by $\sim$ (resp., $\sim_\Q$); numerical equivalence
between $\Q$-Cartier divisors is denoted by $\equiv$.

The {\it stable base locus} $\SBs(D)$ of a $\Q$-divisor $D$ on a normal variety $X$
is defined as the intersection $\bigcap_m \Bs(|mD|)$
of the base loci of the linear systems $|mD|$
taken over all $m > 0$ sufficiently divisible.
For a morphism $f \colon X \to Z$, we denote
$\SBs(D/Z) := \bigcap_H \SBs(D + f^*H)$, where the intersection is taken
over all ample $\Q$-divisors $H$ on $Z$.

A {\it pair} $(X,D)$ consists of a positive dimensional normal variety $X$ and a
$\Q$-divisor $D$ such that $K_X+D$ is $\Q$-Cartier.
Let $\mu \colon Y\to X$ be a proper birational morphism from a normal variety $Y$.
We say that $\m$ is a  {\it log resolution} of $(X,D)$
if the exceptional set $\Ex(\mu)$ is a divisor and the set
$\Ex(\mu) \cup \Supp(K_Y - \mu^*(K_X + D))$, where $\mu_*K_Y = K_X$,
is a simple normal crossings divisor.
Any divisor $E$ on $Y$ is said to be a divisor {\it over} $X$.
If $E$ is a prime divisor, then the {\it log discrepancy} of $E$ over $(X,D)$
is defined as $a_E(X,D) := 1+\ord_{E}(K_Y - \mu ^*(K_X+D))$, where the canonical
divisor $K_Y$ on $Y$ is chosen so that $\m_*K_Y = K_X$.
The {\it minimal log discrepancy} of $(X,D)$ is the infimum of
all log discrepancies of prime divisors over $X$.
Minimal log discrepancies can be either $-\infty$ of $\ge 0$, and in the second case
are computed as the minimum of the log discrepancies of the prime
divisors on any log resolution of $(X,D)$.
The pair $(X,D)$ is {\it Kawamata log terminal} (resp. {\it log canonical})
if $a_E(X,D)>0$ (resp. $a_E(X,D)\geq 0$) for any prime divisor $E$ over $X$;
this condition can be tested on the prime divisors on any log resolution of $(X,D)$.
The pair is {\it canonical} (resp. {\it terminal}) if
$a_E(X,D)\geq 1$ (resp. $a_E(X,D)>1$) for any
prime divisor $E$ exceptional over $X$, and moreover
$\lrd (1-\epsilon)D \rrd = 0$ for all small $\ep > 0$
(this last condition is only relevant when $\dim X = 1$,
as it is redundant otherwise; note that when $\dim X = 1$ the first
conditions are empty, since there are no exceptional divisors at all).

A variety $X$ is {\it Fano} if it is a positive dimensional
projective variety with ample $\Q$-Cartier anti-canonical divisor $-K_X$.
A pair $(X,D)$ is a {\it log Fano variety} (resp. a {\it weak log Fano variety})
if $X$ is projective, $(X,D)$ has Kawamata log terminal singularities, and
$-(K_X + D)$ is ample (resp. nef and big).

\section{Deformation properties of $\Q$-factoriality and
singularities of pairs}\label{sect:def-canonical-pairs}

We start by recalling the following result of Koll\'ar and Mori.

\begin{prop}\label{prop:KM}
Let $X$ be a normal variety and $S\subset X$ a Cartier divisor that
is normal and satisfies Serre's condition $S_3$
(cf. \cite[Definition~5.2]{KM}).
Let $D$ be any Weil divisor on $X$ whose support does not contain $S$.
If $D|_S$ (defined as the restriction of $D$ as a Weil divisor)
is Cartier, and there is a closed subset $Z\subset X$
with ${\rm codim}(Z\cap S,S)\geq 3$ such that
$D$ is Cartier on $U=X-Z$, then
$\O _X(D)$ is Cartier on a neighborhood of $S$.
\end{prop}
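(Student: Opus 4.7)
The question is local near $S$, so I fix a point $p\in S$ and aim to show that the reflexive, rank-one, torsion-free coherent sheaf $\mathcal{F}:=\O_X(D)$ is invertible at $p$. Set $L:=\O_S(D|_S)$, which is invertible by hypothesis, and $W:=Z\cap S$, a subset of codimension at least three in $S$. The central object is the canonical comparison map
\[
\phi\colon \mathcal{F}/I_S\mathcal{F} \longrightarrow L
\]
on $S$, where $I_S$ is the ideal sheaf of $S$. Off $W$ the divisor $D$ is Cartier, so $\phi$ restricts to the identity on $\O_{U\cap S}(D|_{U\cap S})$; consequently both $\ker\phi$ and $\coker\phi$ are supported on $W$. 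Since $S$ is normal and $\codim_S W\geq 2$, the invertible (hence reflexive) sheaf $L$ equals the reflexive hull of $\mathcal{F}/I_S\mathcal{F}$ on $S$ and $\phi$ is the canonical morphism to the double dual; hence $\phi$ is an isomorphism precisely when $\mathcal{F}/I_S\mathcal{F}$ is itself reflexive, equivalently $S_2$, as an $\O_S$-module.

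Granting that $\phi$ is an isomorphism, the proposition follows quickly: pick a local generator $\bar\sigma$ of $L_p$ and lift it through the surjection $\mathcal{F}\twoheadrightarrow \mathcal{F}/I_S\mathcal{F}\xrightarrow{\ \sim\ } L$ to an element $\sigma\in \mathcal{F}_p$. Since $(I_S)_p\subset \mathfrak{m}_p$, Nakayama's lemma applied to the finitely generated $\O_{X,p}$-module $\mathcal{F}_p/\O_{X,p}\sigma$ forces $\mathcal{F}_p=\O_{X,p}\sigma$; torsion-freeness together with rank one then upgrades this to $\mathcal{F}_p\cong \O_{X,p}$, and coherence spreads the conclusion to a neighborhood of $p$.

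The substance of the proof is therefore the $S_2$ property of $\mathcal{F}/I_S\mathcal{F}$. At points of $U\cap S$ this is automatic, as the sheaf is invertible there. At a point $p\in W$ one has $\dim \O_{S,p}\geq 3$, so I need $\depth_p(\mathcal{F}/I_S\mathcal{F})\geq 2$ as an $\O_S$-module; by the depth formula applied to
\[
0 \to \mathcal{F}(-S) \xrightarrow{\ t\ } \mathcal{F} \to \mathcal{F}/I_S\mathcal{F} \to 0
\]
(with $t$ a local equation for $S$), this is equivalent to $\depth_p \mathcal{F}\geq 3$ as an $\O_X$-module. The $S_3$ hypothesis on $S$ is precisely what produces this depth: the Cartier-divisor relation $\depth\,\O_{X,p}=\depth\,\O_{S,p}+1$ yields $\depth\,\O_{X,p}\geq 4$, and the vanishings $H^i_W(L)=0$ for $i\leq 2$ that follow from $L$ being invertible on the $S_3$-scheme $S$ together with $\codim_S W\geq 3$ propagate, via a local cohomology chase through the above exact sequence, to the required vanishing $H^i_{\{p\}}(\mathcal{F})=0$ for $i\leq 2$.

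The main obstacle is this depth bootstrap: one must use the $S_3$ hypothesis on $S$ and the codimension-three assumption on $W$ in concert to push the reflexive sheaf $\mathcal{F}$ from its automatic depth of $2$ up to depth $3$ at points of $W$. Weakening either hypothesis leaves behind residual local cohomology that obstructs $\phi$ from being an isomorphism, which accounts for the tight interplay between the two conditions in the statement.
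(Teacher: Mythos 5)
Your reformulation is well-structured: you correctly reduce the proposition to showing that the comparison map $\phi\colon\mathcal{F}/I_S\mathcal{F}\to L$ is an isomorphism, identify this with the Serre condition $S_2$ for $\mathcal{F}/I_S\mathcal{F}$, translate that into $\depth_p\mathcal{F}\geq 3$ at points $p$ of $W$, and finish by lifting a local generator of $L$ and applying Nakayama together with torsion-freeness of $\mathcal{F}$. All of this is sound, and the depth equalities you invoke ($\depth\O_{X,p}=\depth\O_{S,p}+1$ and $\depth_p(\mathcal{F}/I_S\mathcal{F})=\depth_p\mathcal{F}-1$) are correct.

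The gap is in the depth-bootstrapping step. You assert that the vanishings $H^i_W(L)=0$ for $i\leq 2$ (which do follow from $L$ being invertible on the $S_3$ scheme $S$ and $\codim_SW\geq 3$) ``propagate, via a local cohomology chase through the above exact sequence,'' to $H^i_{\{p\}}(\mathcal{F})=0$ for $i\leq 2$. But the exact sequence you point to, $0\to\mathcal{F}(-S)\to\mathcal{F}\to\mathcal{F}/I_S\mathcal{F}\to 0$, involves $\mathcal{F}/I_S\mathcal{F}$, not $L$, and the only bridge between the two is the map $\phi$ whose bijectivity you are trying to establish. Carrying out the chase at $p\in W$, reflexivity of $\mathcal{F}$ gives $H^0_{\{p\}}(\mathcal{F}/I_S\mathcal{F})=0$ and then an identification $H^1_{\{p\}}(\mathcal{F}/I_S\mathcal{F})\cong\ker\left(t\colon H^2_{\{p\}}(\mathcal{F})\to H^2_{\{p\}}(\mathcal{F})\right)$; to conclude $H^2_{\{p\}}(\mathcal{F})=0$ you would need $H^1_{\{p\}}(\mathcal{F}/I_S\mathcal{F})=0$, which is precisely equivalent to the depth claim you set out to prove. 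The vanishing of $H^i_W(L)$ never enters, because you have no a priori comparison between $\mathcal{F}/I_S\mathcal{F}$ and $L$ at points of $W$. Indeed, a reflexive rank-one sheaf on a local ring of depth $\geq 4$ need only have depth $\geq 2$, so some genuine use of the hypothesis that $D|_S$ is Cartier is required, and the route you sketch does not supply it.

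The paper, following \cite[12.1.8]{KM92}, breaks the circularity by restricting the exact sequence to the open set $U=X\smallsetminus Z$, where $D$ is Cartier and hence $\O_U(D)\otimes\O_{U\cap S}$ is \emph{literally} $L|_{U\cap S}$, with no ambiguity. Pushing forward $0\to\O_U((D-S)|_U)\to\O_U(D|_U)\to\O_{U\cap S}(D|_{U\cap S})\to 0$ along $i\colon U\hookrightarrow X$, the $S_3$ and codimension hypotheses give $R^1i_*\O_{U\cap S}(D|_{U\cap S})=\H^2_{Z\cap S}(L)=0$, so multiplication by the equation of $S$ is surjective on $R^1i_*\O_U(D|_U)$; Nakayama kills $R^1i_*\O_U(D|_U)$ near $S$, whence $\O_X(D)\to\O_S(D|_S)$ is surjective near $S$, and your concluding Nakayama step applies. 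The move you are missing is to run the comparison on $U$, where the quotient is honestly $L$, rather than on $X$, where it is the a priori unidentified sheaf $\mathcal{F}/I_S\mathcal{F}$.
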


For the convenience of the reader, we include the arguments of \cite[12.1.8]{KM92}.

\begin{proof}
Since the question is local, we may assume that $\O_X(S) \cong \O_X$,
and hence $\O_U(S|_U) \cong \O_U$.
Let $i\colon U\to X$ be the inclusion.
Multiplication by the restriction of a section of $\O_X(S)$ defining $S$
gives a short exact sequence on $U$
$$
0\to \O _U(D|_U)\cong \O_U((D-S)|_U)
\to \O_U(D|_U)\to \O _{U\cap S}(D|_{U\cap S})\to 0.
$$
Pushing forward via $i$ and using the fact that as $S$ is $S_3$, then
$R^1i_*\O _{U\cap S}(D|_{U\cap S})=H^2_{Z\cap S}(\O _{S}(D|_{S}))=0$, we
obtain an exact sequence
$$
0\to \O _X(D)\to \O_X(D)\to \O _{S}(D|_{ S})
\to R^1i_*\O _U(D|_U)\to R^1i_*\O_U(D|_U)\to 0.
$$
The map $R^1i_*\O _U(D|_U)\to R^1i_*\O_U(D|_U)$ is induced by multiplication
by the equation of $S$. By Nakayama's Lemma, it follows that $R^1i_*\O_U(D|_U)=0$
on a neighborhood of $S$.
But then $\O_X(D)\to \O _{S}(D|_{ S})$ is surjective and the claim follows.
\end{proof}

\begin{cor}\label{cor:factoriality}
Let $S$ be a normal irreducible Cartier divisor
on a variety $X$, and suppose that $S$
is $\Q$-factorial with terminal singularities.

Then $X$ is $\Q$-factorial in a neighborhood of $S$.
\end{cor}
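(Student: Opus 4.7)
The plan is to fix an arbitrary Weil divisor $D$ on $X$ and apply Proposition~\ref{prop:KM} to show that some positive multiple $mD$ is Cartier in a neighborhood of $S$; this yields $\Q$-factoriality of $X$ along $S$. First I would perform two easy reductions. Since $S$ is Cartier, replacing $D$ by $D - kS$ for a suitable integer $k$ lets me assume that $\Supp D$ does not contain $S$; and since $S$ is $\Q$-factorial, the restriction $D|_S$ is $\Q$-Cartier on $S$, so I may choose $m > 0$ such that $mD|_S$ is Cartier. Furthermore, the terminal hypothesis on $S$ forces $S$ to have rational, hence Cohen--Macaulay, singularities, so the Serre condition $S_3$ required by Proposition~\ref{prop:KM} holds automatically.

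The crux of the argument is then to exhibit a closed subset $Z \subset X$ with $\codim(Z \cap S,\, S) \geq 3$ off of which $mD$ is Cartier. I would take $Z$ to be the non-Cartier locus of $mD$; since regular local rings are unique factorization domains, we have $Z \subseteq \Sing(X)$. The essential geometric input is the inclusion $\Sing(X) \cap S \subseteq \Sing(S)$: if $x \in S$ were a smooth point of $S$ at which $X$ were singular, then writing $S$ locally as $f = 0$ for a non-zero-divisor $f \in \mm_{X,x}$, the regularity of $\OO_{X,x}/(f) = \OO_{S,x}$ would force $\OO_{X,x}$ itself to be regular---a standard commutative-algebra consequence of the fact that $\mm_{X,x}$ is then generated by $f$ together with lifts of a regular system of parameters of $S$, yielding at most $\dim X$ generators. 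Since terminal singularities are smooth in codimension two, this inclusion gives
$$
\codim(Z \cap S,\, S) \geq \codim(\Sing S,\, S) \geq 3,
$$
as required. Proposition~\ref{prop:KM} now yields that $\OO_X(mD)$ is Cartier in a neighborhood of $S$, so that $X$ is $\Q$-factorial at every point of $S$.

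I expect the only substantive step to be the inclusion $\Sing(X) \cap S \subseteq \Sing(S)$, which is what transfers the codimension estimate from $X$---where $\Sing(X)$ could a priori be very large---onto $S$, where $\codim(\Sing S,\, S) \geq 3$ comes for free from the terminal hypothesis. The remaining ingredients are direct verifications of the hypotheses of Proposition~\ref{prop:KM}.
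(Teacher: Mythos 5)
Your proof is correct and is essentially the argument the paper intends (the paper states the corollary without proof, as a direct application of Proposition~\ref{prop:KM}): the key point, namely the inclusion $\Sing(X)\cap S\subseteq\Sing(S)$ via the commutative-algebra lemma that $\OO_{X,x}$ is regular whenever $\OO_{X,x}/(f)$ is regular for a nonzerodivisor $f$, is exactly what makes the codimension estimate go through. The only thing you leave tacit is that $X$ is normal in a neighborhood of $S$ (needed both to speak of Weil divisors on $X$ and to invoke Proposition~\ref{prop:KM}); the paper flags this separately in the remark immediately following the corollary, citing \cite[Corollary~5.12.7]{Gro}.
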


\begin{rmk}
The fact that $X$ is normal in a neighborhood of $S$ follows by
\cite[Corollary~5.12.7]{Gro}.
\end{rmk}

\begin{rmk}
The corollary fails in general if the singularities are canonical but not terminal.
For an example, one can consider the family of quadrics of equation
$\{xy + z^2 + t^2u^2 = 0\} \subset \P^3 \times \A^1$, where $(x,y,z,u)$
are homogeneous coordinates on $\P^3$ and $t$ is a parameter on $\A^1$.
\end{rmk}

\begin{prop}\label{c-ext}
On a normal variety $X$, let $S$ be
a normal Cartier divisor, and $D$ be a
divisor whose support does not contain $S$.
Assume that $K_S + D|_S$ is $\Q$-Cartier and $(S,D|_S)$ is a
canonical pair.

Then $K_X+S+D$ is $\Q$-Cartier on a neighborhood of $S$. Moreover:
\begin{enumerate}
\item
If $\lrd D|_{S} \rrd = 0$,
then $(X,S+D)$ is purely log terminal
and $(X,D)$ is canonical in a neighborhood of $S$.
In particular, if $f\colon X\to T$ is a flat morphism and $S=X_0$
is the fiber over a point $0 \in T$, then
$(X_t,D|_{X_t})$ is canonical for every $t$
in a neighborhood of $0$.
\item
If $(S,D|_S)$ is terminal and $\lrd D|_{S} \rrd = 0$,
then $(X,D)$ is terminal in a neighborhood of $S$.
In particular, if $f\colon X\to T$ is a flat morphism and $S=X_0$
for some $0 \in T$, then $(X_t,D|_{X_t})$ is terminal for every $t$
in a neighborhood of $0$.
\item
If $S$ is $\Q$-Gorenstein (equivalently, if $D|_{S}$ is $\Q$-Cartier),
then $(X,S+D)$ is log canonical in a neighborhood of $S$.
\end{enumerate}
\end{prop}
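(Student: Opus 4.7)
The plan is to first establish that $K_X + S + D$ is $\Q$-Cartier in a neighborhood of $S$, then apply inversion of adjunction, and finally combine a discrepancy identity with standard adjunction to deduce the canonical and terminal conclusions. The statements about fibers in flat families then follow from the absolute statement together with deformation invariance of canonical and terminal singularities (cf.\ the introduction).

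The $\Q$-Cartier property is the main technical point, where I would invoke Proposition \ref{prop:KM} applied to the Weil divisor $L := m(K_X + S + D)$, for $m > 0$ divisibly chosen so that $m(K_S + D|_S)$ is Cartier on $S$. The Serre $S_3$ condition on $S$ holds because $(S, D|_S)$ canonical with $D|_S \ge 0$ forces $S$ alone to be canonical (by monotonicity of log discrepancies under enlarging the boundary), hence $S$ has rational, in particular Cohen--Macaulay, singularities. Adjunction for the Cartier divisor $S$ identifies $L|_S$ with $m(K_S + D|_S)$, which is Cartier. The chief obstacle is then verifying that $L$ is Cartier outside a closed $Z \subset X$ with $\codim(Z \cap S, S) \ge 3$: at codimension-$1$ points of $S$, normality and Cartierness of $S$ force both $S$ and $X$ to be smooth; at codimension-$2$ points of $S$, where $S$ may carry Du Val singularities, one uses that canonical surface germs are Gorenstein, together with the Cartier property of $S$ in $X$, to control $L$.

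With $K_X + S + D$ now $\Q$-Cartier, (c) follows from the classical inversion of adjunction, since $(S, D|_S)$ canonical is log canonical. For (a) and (b), the hypothesis $\lrd D|_S \rrd = 0$ upgrades $(S, D|_S)$ from canonical to Kawamata log terminal, and inversion of adjunction yields $(X, S + D)$ purely log terminal near $S$. To pass from plt of $(X, S+D)$ to canonical (resp.\ terminal) of $(X, D)$, I split into two cases on a suitable log resolution $\mu \colon Y \to X$. For an exceptional prime divisor $E$ on $Y$ whose center in $X$ lies in $S$, the identity
\[
a_E(X, D) = a_E(X, S + D) + \ord_E(\mu^* S),
\]
together with $\ord_E(\mu^* S) \ge 1$ and $a_E(X, S + D) > 0$ from plt, yields $a_E(X, D) > 1$. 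For an exceptional $E$ whose center is not contained in $S$, choose $\mu$ so that $E$ meets the strict transform $S_Y$ transversally in a prime divisor $F$; a direct adjunction computation relating $(K_Y - \mu^*(K_X+D))|_{S_Y}$ to $K_{S_Y} - \mu_S^*(K_S+D|_S)$ gives $a_E(X, D) = a_F(S, D|_S)$, which is at least $1$ (resp.\ greater than $1$) by the canonical (resp.\ terminal) hypothesis on $(S, D|_S)$. The condition $\lrd (1-\epsilon)D \rrd = 0$ near $S$ is immediate from $\lrd D|_S \rrd = 0$, since the coefficients of $D$ restrict with nonnegative multiplicities to those of $D|_S$.
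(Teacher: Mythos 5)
Your overall strategy is the same as the paper's: establish $S_3$ from the canonical hypothesis, feed $m(K_X+S+D)$ into Proposition~\ref{prop:KM}, and then use inversion of adjunction. But the details diverge in a few places, and there are gaps worth flagging.

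For the $\Q$-Cartier step, the paper does not argue pointwise in codimension as you do; it simply invokes ``the $2$-dimensional case'' (that is, the $\dim S=2$ case of the proposition itself, which is the well-known $3$-fold inversion of adjunction going back to Reid and Koll\'ar--Shepherd-Barron) to conclude that $(X,D)$ is canonical at all codimension-$\le 2$ points of $X$ near $S$, whence the non-$\Q$-Cartier locus has codimension $\ge 3$. Your discussion of codimension-$1$ points of $S$ is fine (normal Cartier divisor implies $X$ smooth there), but your treatment of codimension-$2$ points of $S$ --- ``canonical surface germs are Gorenstein, together with the Cartier property of $S$ in $X$, to control $L$'' --- is precisely where the real content lies, and it does not suffice as stated: knowing $K_S$ is Cartier at a Du Val point $p\in S$ gives no direct handle on $K_X$ at $p$. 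What is actually being used is the $3$-fold statement that a Cartier divisor with a Du Val (or Du Val log) singularity forces the ambient germ to be canonical, which is exactly the $\dim S=2$ case. You should cite that result rather than gesture at it.

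For (a) and (b), your route is genuinely different from the paper's. The paper cites \cite[Corollary~1.4.3]{BCHM} for the implication ``plt of $(X,S+D)$ implies canonical of $(X,D)$,'' and proves (b) by a clever perturbation: given a bad exceptional $E$ with center $W$ meeting $S$, pick a Cartier $H\supset W$ meeting $S$ properly and observe that $(X,D+\ep H)$ violates~(a). Your direct discrepancy computation on a log resolution is a reasonable, more self-contained alternative, and the two cases (center in $S$, using $\ord_E(\mu^*S)\ge 1$; center not in $S$, using $\ord_E(\mu^*S)=0$ and adjunction of log discrepancies along $S_Y$) are the right dichotomy. However, there is a gap in the second case: you assume one can choose the log resolution so that $E$ meets $S_Y$ transversally in a prime divisor $F$, but if the center of $E$ meets $S$ without being contained in it, $E$ may intersect $\mu^{-1}(S)$ only inside other exceptional components and miss $S_Y$ entirely. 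That situation has to be ruled out or handled (e.g.\ via a more careful use of the different/precise inversion of adjunction, or by the MMP argument of \cite{BCHM}); as written the step is incomplete.

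For (c), you invoke ``classical inversion of adjunction'' for log canonical pairs. That implication ($(S,D|_S)$ lc $\Rightarrow$ $(X,S+D)$ lc near $S$) is Kawakita's theorem, not the classical Shokurov--Koll\'ar klt/plt statement, and it is much heavier machinery than needed. Note also that your argument never uses the $\Q$-Gorenstein hypothesis on $S$ --- a sign that you are proving something stronger than intended. The paper's proof is more elementary: the $\Q$-Gorenstein hypothesis makes $K_S+(1-\ep)D|_S$ $\Q$-Cartier, and since $(S,(1-\ep)D|_S)$ is klt, classical inversion of adjunction gives $(X,S+(1-\ep)D)$ plt for all small $\ep>0$, hence $(X,S+D)$ lc in the limit.

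Finally, the passage to the statements on fibers $X_t$ should not be phrased as ``deformation invariance of canonical/terminal singularities'' (that is the statement being proved); what is needed is the Bertini-type fact that a general fiber of a morphism from a canonical (resp.\ terminal) pair to a curve is again canonical (resp.\ terminal), which is standard but distinct.
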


\begin{proof} Since $(S,D|_S)$ is canonical, $S$ is Cohen--Macaulay.
The case in which $\dim S = 1$ is trivial and the one in which
$\dim S=2$ is well known. We therefore assume that
$\dim S\geq 3$ and hence $S$ is $S_3$.
By the $2$-dimensional case, it follows that $(X,D)$ is canonical
in codimension $2$ near $S$. Therefore the closed subset $Z\subset X$
on which $K_X+S+D$ is not $\Q$-Cartier has codimension at least $3$.
By Proposition~\ref{prop:KM}, $K_X+S+D$ is $\Q$-Cartier on a neighborhood of $X$.

If $\lrd D|_{S} \rrd = 0$, then $(S,D|_{S})$ is
Kawamata log terminal, and since
we have now established that $K_X+S+D$ is $\Q$-Cartier,
it is well known that $(X,S+D)$ is purely log terminal
in a neighborhood of $S$ (cf. \cite[Theorem 5.50]{KM}).
By the arguments of \cite[Corollary 1.4.3]{BCHM}
it follows that $(X,D)$ is canonical in a neighborhood of $S$.

Assume now that $(S,D|_S)$ is terminal and $\lrd D|_{S} \rrd = 0$,
and suppose that $(X,D)$ is not terminal near $S$.
Then there is an exceptional divisor over $X$ whose
discrepancy over $(X,D)$ is $\le 0$ and whose center
$W \subset X$ intersects $X_0$. We can find an effective Cartier divisor
$H \subset X$ containing $W$ and intersecting
properly $X_0$. Note that the pair $(X,D + \ep H)$ is not canonical at $W$
for any $\ep > 0$. On the other hand, if $\ep$ is sufficiently
small, then $(X_0,D_{X_0} + \ep H_{X_0})$
is canonical and $\lrd D_{X_0} + \ep H_{X_0}\rrd = 0$.
This contradicts~(a).

Regarding the assertion in~(c), if $S$ is $\Q$-Gorenstein,
then $K_{S} + (1-\ep)D|_{S}$ is $\Q$-Cartier and $(S,(1-\ep)D|_{S})$ is
Kawamata log terminal for every rational number $\ep > 0$,
so that we conclude that
$(X,S+(1-\ep)D)$ is purely log terminal in a neighborhood of $S$ and hence
$(X,S+D)$ is log canonical on this neighborhood.
\end{proof}

\begin{rmk}
The example \cite[Example~4.3]{Kaw2} shows that this result fails for
Kawamata log terminal singularities.
\end{rmk}

\begin{rmk}
For a pair $(X,D)$ with canonical singularities, the property that
$\lrd D \rrd \le 0$ is equivalent to the pair being
Kawamata log terminal.
\end{rmk}

Since ampleness is open in families, it follows in particular
by Proposition~\ref{c-ext} that every small flat deformation
of a log Fano variety with canonical/terminal singularities
is a log Fano variety with canonical/terminal singularities.
In fact, the same holds for weak log Fano varieties as well.

\begin{prop}\label{thm:deform-log-fanos}
Let $f\colon X\to T$ be a flat projective fibration from a normal
variety to a smooth curve. Fix a point $0 \in T$,
and suppose that the fiber $X_0$ over $0$ is a normal variety.
Assume that for some effective
divisor $D$ on $X$ not containing $X_0$ in its support,
the pair $(X_0,D|_{X_0})$ is a log Fano variety (resp. a weak log Fano variety)
with canonical/terminal singularities.

Then $(X_t,D|_{X_t})$ is a log Fano variety (resp. a weak log Fano variety)
with canonical/terminal singularities for all $t$ in a neighborhood of $0\in T$.
\end{prop}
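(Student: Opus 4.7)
The plan is to deduce everything from Proposition~\ref{c-ext} together with standard openness properties in flat families. Since $(X_0,D|_{X_0})$ is Kawamata log terminal, we have $\lrd D|_{X_0} \rrd = 0$. Applying Proposition~\ref{c-ext}(a), or~(b) in the terminal case, to the normal Cartier divisor $X_0 \subset X$ shows that $(X_t,D|_{X_t})$ has canonical (respectively terminal) singularities for all $t$ near $0$, and also that $K_X + X_0 + D$ is $\Q$-Cartier near $X_0$. Since $X_0$ is Cartier in $X$, it follows that $K_X+D$ is $\Q$-Cartier near $X_0$, and after shrinking $T$ I may assume $K_X+D$ is $\Q$-Cartier on all of $X$. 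Adjunction then yields $(K_X+D)|_{X_t} = K_{X_t}+D|_{X_t}$ for every $t$. It therefore remains to transfer the positivity of $-(K_{X_0}+D|_{X_0})$ to nearby fibers.

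In the log Fano case this is immediate: $-(K_X+D)$ is a $\Q$-Cartier divisor on $X$ whose restriction to $X_0$ is ample, so by openness of $f$-ampleness in flat projective families (after shrinking $T$) the restrictions $-(K_{X_t}+D|_{X_t})$ are ample for all $t$ near $0$.

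For the weak log Fano case, I would combine the base-point-free theorem with cohomology and base change. Choose $m > 0$ divisible enough that $-m(K_X+D)$ is Cartier and, by the base-point-free theorem applied to the KLT pair $(X_0,D|_{X_0})$ with $-(K_{X_0}+D|_{X_0})$ nef and big, the linear system $|-m(K_{X_0}+D|_{X_0})|$ is base-point-free. Kawamata--Viehweg vanishing, using the nef-and-big twist $-(m+1)(K_{X_0}+D|_{X_0})$, gives $H^i(X_0,-m(K_{X_0}+D|_{X_0})) = 0$ for $i > 0$; by upper semicontinuity this vanishing extends to a neighborhood of $0$, and cohomology and base change then force $f_*\O_X(-m(K_X+D))$ to be locally free near $0$ and compatible with restriction to fibers. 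In particular, after further shrinking $T$, every section on $X_0$ extends to $X$, so the base locus of $|-m(K_X+D)|$ on $X$ meets $X_0$ only inside the (empty) base locus of $|-m(K_{X_0}+D|_{X_0})|$; by properness of $f$, I can shrink $T$ once more so that this base locus is disjoint from every fiber $X_t$ near $0$. Hence $|-m(K_{X_t}+D|_{X_t})|$ is base-point-free, so $-(K_{X_t}+D|_{X_t})$ is nef. Bigness then follows because the top self-intersection $(-(K_{X_t}+D|_{X_t}))^n = (-(K_X+D))^n \cdot X_t$ is independent of $t$ by flatness and is strictly positive at $t=0$, and a nef divisor with positive top self-intersection is big. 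The one technical step to watch is the lifting of sections in the weak case; the log Fano case reduces entirely to openness of ampleness.
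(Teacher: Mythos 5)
Your argument is correct, and for the weak log Fano case it takes a genuinely different route from the paper's. The reduction to Proposition~\ref{c-ext} and the openness of ampleness in the log Fano case coincide with the paper's. For the weak case, the paper instead invokes Lemma~\ref{lem:nef-big-open}, a more general deformation statement for nef-and-big divisors (used again later in the proof of Theorem~\ref{t-extf}), whose proof rests on Koll\'ar's effective base-point-freeness to obtain a uniform bound $m$ such that $|mL|_{X_t}|$ is free for very general $t$, followed by a finiteness argument on the locus where $L$ fails to be nef. Your approach is more direct and tailored to the adjoint divisor $L=-(K_X+D)$: you apply the base-point-free theorem to the KLT pair $(X_0,D|_{X_0})$, then Kawamata--Viehweg vanishing gives $h^{>0}(X_0,-m(K_{X_0}+D|_{X_0}))=0$, so by semicontinuity, Grauert's theorem and cohomology-and-base-change (Hartshorne III.12.8--12.11), $f_*\O_X(-m(K_X+D))$ is locally free and compatible with restriction near $0$; sections extend, the base locus of $|-m(K_X+D)|$ misses $X_0$ and hence (by properness) all nearby fibers, giving nefness, and bigness follows from constancy of the top self-intersection. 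This avoids effective base-point-freeness but only yields the conclusion for the specific adjoint divisor, whereas the paper's lemma handles an arbitrary nef-and-big $L$ satisfying a suitable adjoint positivity twist, which is the generality needed for its reuse. One minor omission worth noting: to conclude that the nearby pairs $(X_t,D|_{X_t})$ are KLT (as the definitions of log Fano and weak log Fano require), you should also record that $\lrd D\rrd=0$ near $X_0$, hence $\lrd D|_{X_t}\rrd=0$ for $t$ near $0$, which together with canonicity gives the KLT property; the paper states this explicitly.
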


\begin{proof} As $(X_0,D|_{X_0})$ is a log Fano variety
(resp. a weak log Fano variety), we have that $\lfloor D|_{X_0}\rfloor =0$.
Note that, since $X_0$ is a connected and reduced fiber,
$f$ has connected fibers. Since $(X_0,D|_{X_0})$ is canonical,
the $\Q$-divisor $K_X + D$ is $\Q$-Cartier in
a neighborhood of $X_0$ by Proposition~\ref{c-ext}, and the pair
$(X_t,D|_{X_t})$ is canonical/terminal for all $t$ in a neighborhood of $0$.
Notice also that $\lrd D \rrd = 0$ in a neighborhood of $X_0$,
and hence $\lrd D|_{X_t} \rrd = 0$ for $t$ in a neighborhood of $0$.
If $(X_0,D|_{X_0})$ is log Fano, then $-(K_{X_t}+D|_{X_t})$ is ample
for every $t$ in a neighborhood of $0\in T$ since ampleness is
open in families, and hence $(X_t,D|_{X_t})$ is log Fano for any such $t$.
Assuming that $(X_0,D|_{X_0})$ is only weak log Fano,
we conclude by the lemma that follows.
\end{proof}

\begin{lem}\label{lem:nef-big-open}
Let $(X,D)$ be a Kawamata log terminal pair (with $D$
an effective $\Q$-divisor). Suppose that
$f\colon X\to T$ is a flat projective fibration
to a smooth curve, and let $0 \in T$ be a point such that the fiber
$X_0$ is a normal variety not contained in the support of $D$.
Let $L$ be a $\Q$-Cartier $\Q$-divisor on $X$ such that $L|_{X_0}$ is nef and big
and so is $aL|_{X_0}-(K_X + D)|_{X_0}$ for some $a \ge 0$.
Then $L|_{X_t}$ is nef and big
for all $t$ in a neighborhood of $0\in T$.
\end{lem}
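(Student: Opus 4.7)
The plan is to use Kawamata--Viehweg vanishing and the Base Point Free theorem on the fiber $X_0$ to produce a basepoint-free multiple of $L|_{X_0}$, and then propagate basepoint-freeness, nefness, and bigness to nearby fibers via cohomology and base change together with the constancy of top self-intersection in flat families.

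After shrinking $T$ near $0$, the fiber $X_0$ is cut out by a single regular function, so $\O_X(X_0)|_{X_0}\cong\O_{X_0}$ and adjunction gives $K_{X_0}+D|_{X_0}\sim_\Q(K_X+D)|_{X_0}$. One first argues that $(X_0, D|_{X_0})$ is Kawamata log terminal. (This is the delicate point: it amounts to pure log terminality of $(X,X_0+D)$ in a neighborhood of $X_0$, which follows from the klt hypothesis on $(X,D)$ together with inversion of adjunction, and it is automatic in the intended applications to Proposition~\ref{thm:deform-log-fanos}, where $(X_0,D|_{X_0})$ is assumed to be log Fano with canonical singularities.) For any sufficiently divisible $m\ge a$, the $\Q$-divisor
\[
mL|_{X_0}-(K_{X_0}+D|_{X_0})\sim_\Q (m-a)L|_{X_0}+\bigl(aL|_{X_0}-(K_X+D)|_{X_0}\bigr)
\]
is the sum of a nef class and a nef-and-big class, hence is nef and big. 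Kawamata--Viehweg vanishing on the klt pair $(X_0,D|_{X_0})$ therefore yields $H^i(X_0,\O_{X_0}(mL|_{X_0}))=0$ for all $i>0$, and the Base Point Free theorem, after possibly replacing $m$ by a suitable multiple, shows that $mL|_{X_0}$ is basepoint-free.

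Now upper semicontinuity and cohomology-and-base-change imply that $h^i(X_t,mL|_{X_t})=0$ for $i>0$ and all $t$ in a neighborhood $U$ of $0$, that $f_*\O_X(mL)$ is locally free on $U$, and that its formation commutes with base change. Hence the natural map $f^*f_*\O_X(mL)\to\O_X(mL)$ restricts on each fiber to the usual evaluation; since it is surjective on $X_0$, Nakayama's lemma makes it surjective on an open neighborhood of $X_0$ in $X$. After shrinking $U$, every $mL|_{X_t}$ is basepoint-free, so $L|_{X_t}$ is nef. Finally, $h^0(X_t,kmL|_{X_t})=\chi(X_t,kmL|_{X_t})$ is locally constant in $t$ for every $k\ge 1$, so by asymptotic Riemann--Roch and the bigness of $L|_{X_0}$ one gets $(L|_{X_t})^n=(L|_{X_0})^n>0$, and therefore $L|_{X_t}$ is also big.

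The main obstacle is verifying that $(X_0,D|_{X_0})$ is klt from the stated hypotheses; once that is granted, the remaining vanishing-plus-base-change argument and the Hilbert polynomial computation for bigness are routine.
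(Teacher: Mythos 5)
Your overall strategy (Kawamata--Viehweg vanishing plus the Base Point Free theorem to produce a free multiple of $L|_{X_0}$, then propagation to nearby fibers by cohomology and base change) is a natural one, but the point you flag as ``delicate'' is in fact a genuine gap, and your proposed fix does not work. The hypotheses of the lemma do not give klt-ness of $(X_0,D|_{X_0})$: the implication you invoke is the \emph{wrong direction} of inversion of adjunction. Inversion of adjunction says that if $(X_0,D|_{X_0})$ is klt then $(X,X_0+D)$ is plt near $X_0$; it does \emph{not} say that $(X,D)$ klt (or even $(X,X_0+D)$ lc) forces $(X_0,D|_{X_0})$ to be klt. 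Indeed, already for $X=\mathbb A^2$, $D=\tfrac23\{y=x^2\}$, $f=$ projection to the $y$-axis, and $X_0=\{y=0\}$, the pair $(X,D)$ is klt and $X_0$ is smooth, yet $D|_{X_0}=\tfrac43\{0\}$ so $(X_0,D|_{X_0})$ is not even log canonical. So neither the Kawamata--Viehweg vanishing $H^i(X_0,mL|_{X_0})=0$ nor the Base Point Free theorem on $(X_0,D|_{X_0})$ is available from the stated hypotheses, and the rest of your argument cannot get off the ground.

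The paper circumvents exactly this issue by never using singularities of the central fiber: it first shows, for $M\in\{L,\,raL-r(K_X+D)\}$, that $M|_{X_t}$ is nef and big for \emph{very general} $t$ (nefness by the standard specialization argument, bigness by the semicontinuity-of-higher-cohomology plus constancy-of-$\chi$ trick). For very general $t$ the pair $(X_t,D|_{X_t})$ is automatically klt by generic adjunction, so Koll\'ar's effective base-point-freeness applies on those fibers with a uniform $m$ depending only on $\dim X_0$ and $a$. Combining generic finiteness of $\varphi_{|mL|}|_{X_t}$ with the surjectivity of the restriction on $H^0$, one gets bigness of $L|_{X_t}$ for general $t$, and nefness near $0$ by a base-locus argument: $\mathrm{Bs}(|mL|)$ is Zariski closed in $X$, meets $X_t$ whenever $L|_{X_t}$ is not nef, and misses the very general fiber, so the bad set of parameters is finite. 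You should either restructure your argument along these lines (applying BPF on a very general fiber rather than on $X_0$), or explicitly add klt-ness of $(X_0,D|_{X_0})$ as a hypothesis, in which case your proof is essentially correct but proves a weaker statement than the one in the paper.
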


\begin{proof}
By re-scaling, we may well assume that $L$ is a Cartier divisor.
Let $M \in \{L, raL -r(K_X + D)\}$, where $r$ is the Cartier index
of $aL -(K_X + D)$ (we can assume that $a$ is rational).
Note that $M|_{X_0}$ is nef and big.
Then $M|_{X_t}$ is nef if $t \in T$ is very general
(see \cite[Proposition~1.4.14]{Laz}).

We claim that $M|_{X_t}$ is also big for very general $t \in T$.
Indeed, since $M|_{X_0}$ is nef and big, all the higher cohomology
groups $H^i(kM|_{X_0})$
fail to grow maximally as $k \to \infty$; more precisely, there is
a constant $C$ such that
$h^i(\O_{X_0}(kM|_{X_0})) \le Ck^{d-1}$ for every $i > 0$ and $k \ge 1$
where $d$ is the dimension of $X_0$.
Note that $f$ is equidimensional.
By semicontinuity of cohomological dimensions \cite[III.12.8]{Har},
for every $k$ there is a open neighborhood $U_k \subseteq T$ of $0$
such that
$h^i(\O_{X_t}(kM|_{X_t})) \le Ck^{d-1}$ for every $i > 0$ and $t \in U_k$.
We conclude that $h^i(\O_{X_t}(kM|_{X_t}))$ fails to grow maximally
for very general $t$. On the other hand, note that $\O_X(kM)$ is
flat over $T$ (cf. \cite[III.9.2(e) and~III.9.2(c)]{Har}),
and thus the Euler characteristic
of $\O_{X_t}(kM|_{X_t})$ is constant as a function of $t$.
We conclude that $h^0(\O_{X_t}(kM|_{X_t}))$ grows maximally
(i.e., to the order of $k^d$) for very general $t$.
This shows that $M|_{X_t}$ is big for very general $t \in T$.

So there is a set $W \subseteq T$, which is the complement
of a countable set, such that
$(X_t,D|_{X_t})$ is Kawamata log terminal and both
$L|_{X_t}$ and $aL|_{X_t}-(K_X + D)|_{X_t}$ are nef and big for every $t \in W$.
By \cite[Theorem~1.1]{Kollar},
there is a positive integer $m$ (depending only on $d$ and $a$)
such that $|mL|_{X_t}|$ is base point free for every $t \in W$,
and for any such $t$ the associated morphism $\f_{|mL|_{X_t}|}$ is generically finite
since $L|_{X_t}$ is big. By further shrinking $W$ if necessary,
we can also assume that the restriction map
$H^0(\O_X(mL)) \to H^0(\O_{X_t}(mL))$
is surjective (cf. \cite[III.12.8 and~III.12.9]{Har}).
This implies that $\f_{|mL|_{X_t}|} = \f_{|mL|}|_{X_t}$ for all $t \in W$,
and hence we conclude that $\f_{|mL|}|_{X_t}$ is generically finite
for a general $t \in T$. This implies that $L|_{X_t}$ is big
for every $t$ in a neighborhood of $0$.

Consider now the subset
$\S = \{ t \in T \mid \text{$L|_{X_t}$ is not nef} \}$.
We consider the base locus ${\rm Bs}(|mL|)$ of $|mL|$.
Note that this locus is a Zariski closed subset of $X$.
We observe that ${\rm Bs}(|mL|) \cap X_t \ne \emptyset$
for all $t \in \S$, since it clearly contains the locus where
$L$ is not nef. On the other hand,
taking $t$ very general so that $|mL|_{X_t}|$ is base point free,
we have ${\rm Bs}(|mL|_{X_t}|) = \emptyset$,
and since we can also ensure that the restriction map
$H^0(\O_X(mL)) \to H^0(\O_{X_t}(mL))$
is surjective, we conclude that
${\rm Bs}(|mL|) \cap X_t = \emptyset$.
This implies that $\S$ is a finite set, and hence
$L|_{X_t}$ is also nef for every $t$ in a neighborhood of $0$.
\end{proof}

The above lemma will also turn out to be useful in the next section.

\section{Invariance of plurigenera for canonical pairs}\label{sect:exteS:extnsion}

This section is the technical core of the paper.
Our first result relates to the application of the minimal
model program on a family of varieties
(see \cite[12.3]{KM92} for a related statement).

\begin{thm}\label{p-1}
Let $f\colon X\to T$ be a flat projective morphism of normal varieties
where $T$ is an affine curve. Assume that for some
$0 \in T$ the fiber $X_0$ is normal
and $\Q$-factorial, $N^1(X/T)\to N^1(X_0)$ is
surjective, and $D$ is an effective $\Q$-divisor whose support
does not contain $X_0$ such that $(X_0,D|_{X_0})$ is a Kawamata log terminal pair
with canonical singularities. Let $\psi \colon X\to Z$ be the contraction over $T$
of a $(K_X +D)$-negative extremal ray of $\CNE(X/T)$, and let $Z_0 = \psi(X_0)$.

If $\psi_0 := \psi|_{X_0} \colon X_0 \to Z_0$ is not an isomorphism,
then it is the contraction of a
$(K_{X_0} +D|_{X_0})$-negative extremal ray, and:
\begin{enumerate}
\item
If $\psi$ is of fiber type, then so is $\psi_0$.
\item
If $\psi$ is a divisorial contraction of a divisor $G$,
then $\psi _0$ is a divisorial
contraction of $G|_{X_0}$ (in particular $G|_{X_0}$ is irreducible),
and $N^1(Z/T)\to N^1(Z_0)$ is surjective.
\item
Assume additionally that either
\begin{itemize}
\item
${\rm SBs}(K_X+D/Z)$ does not contain any
component of ${\rm Supp}(D|_{X_0})$, or
\item
$D|_{X_0}-aK_{X_0}$ is nef over $Z_0$ for some $a>-1$.
\end{itemize}
If $\psi$ is a flipping contraction and
$\psi ^+\colon X^+\to Z$ is the flip, then $\psi _0$ is a
flipping contraction and, denoting $X_0^+$ the proper transform of $X_0$
on $X^+$, the induced morphism $\psi ^+_0\colon X^+_0\to Z_0$ is the flip
of $\psi _0\colon X_0\to Z_0$. Moreover
$N^1(X^+/T)\to N^1(X_0^+)$ is surjective.
\end{enumerate}
\end{thm}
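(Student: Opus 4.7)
The plan is to transfer information about the extremal ray $R$ from $X/T$ down to $X_0$ via adjunction, and then analyze each of the three types of contraction separately. I would begin by invoking Proposition~\ref{c-ext} with $S=X_0$ to conclude that, in a neighborhood of $X_0$, $(X,X_0+D)$ is purely log terminal, $(X,D)$ is canonical, and $K_X+X_0+D$ is $\Q$-Cartier; Corollary~\ref{cor:factoriality} gives $\Q$-factoriality of $X$ near $X_0$. Since $X_0$ is algebraically equivalent to a general fiber of $f$ (disjoint from every curve $C\subset X_0$), we have $X_0\cdot C = 0$, so adjunction produces the key identity
\[
(K_X+D)\cdot C = (K_X+X_0+D)\cdot C = (K_{X_0}+D|_{X_0})\cdot C \quad \text{for every } C\subset X_0.
\]
Dualizing the hypothesized surjection $N^1(X/T)\twoheadrightarrow N^1(X_0)$ gives an injection $N_1(X_0)\hookrightarrow N_1(X/T)$, and the preimage $R_0$ of $R$ is (when nontrivial) an extremal $(K_{X_0}+D|_{X_0})$-negative ray of $\CNE(X_0)$; whenever $\psi_0$ is not an isomorphism, $\psi_0$ is (after Stein factorization) exactly its contraction. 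From $\psi_*\O_X=\O_Z$ and the surjectivity of $Z\to T$ I would also note that $\dim Z_0 = \dim Z - 1$.

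Case~(a) is then immediate from $\dim Z_0 < \dim X_0$. For~(b), the dimension count rules out fiber type for $\psi_0$; since $X_0 \not\subset G$ (otherwise one would get $\dim \psi(G) < \dim Z_0 = \dim X_0$, which is impossible), the intersection $G\cap X_0$ is a pure codimension-one subscheme of $X_0$, so $\psi_0$ is divisorial with exceptional divisor supported on $G|_{X_0}$; irreducibility of $G|_{X_0}$ is forced by the classification of extremal divisorial contractions. The surjectivity $N^1(Z/T) \to N^1(Z_0)$ would follow from a diagram chase using the identifications $\psi^* N^1(Z/T) = R^\perp$ and $\psi_0^* N^1(Z_0) = R_0^\perp$: any class on $Z_0$ pulls back to $N^1(X_0)$, lifts via the given surjection to $N^1(X/T)$, and is seen to lie in $R^\perp$ by pairing it against a generator of $R$ that may be chosen inside $X_0$.

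The main obstacle is case~(c). The flip $\psi^+$ exists by~\cite{BCHM}, and the heart of the argument is to prove that $\psi_0$ is flipping rather than divisorial, i.e.\ that no component of $\Ex(\psi)$ is contained in $X_0$. Under the first alternative hypothesis, a divisorial component of $\Ex(\psi)$ contained in $X_0$ would be a divisor of $X_0$ lying in $\SBs(K_X+D/Z)$; the hypothesis would force it to be disjoint from $\Supp(D|_{X_0})$, and this is incompatible with a lifting-of-sections argument in the spirit of the extension theorems of Hacon and M\textsuperscript{c}Kernan. Under the second alternative, the identity of the first step together with $(D|_{X_0}-aK_{X_0})\cdot C \geq 0$ for curves $C\in R_0$ forces $(1+a)K_{X_0}\cdot C<0$, hence $K_{X_0}\cdot C<0$; combining this with the structure of the flip---and an analogous analysis on $X^+$, whose positivity conditions transfer via the flip---excludes divisorial exceptional components of both $\psi_0$ and $\psi_0^+$. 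Once these contractions are known to be small, the proper transform $X_0^+$ inherits $\Q$-factoriality and klt canonical singularities from the construction, and $K_{X_0^+}+D^+|_{X_0^+}$ is $\psi_0^+$-ample by adjunction from the $\psi^+$-ample $K_{X^+}+X_0^++D^+$, identifying $\psi_0^+$ as the flip of $\psi_0$. The surjection $N^1(X^+/T)\to N^1(X_0^+)$ is finally obtained by transporting the given surjection across the small $\Q$-factorial birational modifications $X\dashrightarrow X^+$ and $X_0\dashrightarrow X_0^+$, under which N\'eron--Severi spaces are identified via proper transform.
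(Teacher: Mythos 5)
Your overall framing — restrict along $X_0\sim 0$ via adjunction, dualize the N\'eron--Severi surjection, and analyze each contraction type — follows the same general plan as the paper, and cases~(a) and~(b) are essentially correct. Two things, however, are missing.

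First, a small but real point: the statement asserts that $\psi_0$ itself, not merely its Stein factorization, is the extremal contraction. You hedge this ("after Stein factorization"). The paper proves connectedness of the fibers of $\psi_0$ directly: it takes an ample $H$ so that $\psi$ is defined by $|m(K_X+D+H)|$, and shows the restriction $H^0(\O_X(m(K_X+D+H)))\to H^0(\O_{X_0}(m(K_{X_0}+D|_{X_0}+H|_{X_0})))$ is surjective by Kawamata--Viehweg vanishing (using $X_0\sim 0$). This identifies $\psi_0$ with the morphism defined by the restricted linear system, hence with the extremal contraction; you need some such argument, not just Stein factorization.

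Second, and more seriously, your treatment of case~(c) has a genuine gap. The heart of (c) is to show that when $\psi$ is flipping, $\psi_0$ cannot be divisorial. Your two suggested mechanisms do not work as stated. Under the $\SBs$ hypothesis you appeal to an unspecified "lifting-of-sections argument"; under the nef hypothesis you correctly extract that the contracted ray is $K_{X_0}$-negative, but then assert this "excludes divisorial exceptional components," which is false in general — divisorial $K$-negative extremal contractions are ubiquitous. The actual mechanism is to apply Proposition~\ref{c-ext} \emph{downstairs}, to $Z_0\subset Z$: if $\psi_0$ were divisorial, $Z_0$ would be $\Q$-factorial, and either $(Z_0,(\psi_0)_*D|_{X_0})$ is canonical (in the $\SBs$ case, since $\SBs(K_X+D/Z)=\Ex(\psi)$, which forces $\Ex(\psi_0)\not\subseteq\Supp D|_{X_0}$) or $Z_0$ itself is canonical (in the nef case, since $-(a+1)K_{X_0}$ is $\psi_0$-ample). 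Either way Proposition~\ref{c-ext} then forces $K_Z+\psi_*D$ (resp.\ $K_Z$) to be $\Q$-Cartier near $Z_0$, contradicting the fact that $\psi$ is a small $(K_X+D)$-negative (resp.\ $K_X$-negative) contraction. Without this step the proof of (c) does not go through. Once $\psi_0$ is known to be small, your identification of $\psi_0^+$ as the flip via $\psi^+$-ampleness and adjunction is in the right spirit, but you should also verify explicitly (as the paper does, via a common resolution and the canonicity of $(X_0,D|_{X_0})$) that $\phi_0\colon X_0\dashrightarrow X_0^+$ is an isomorphism in codimension one, so that $D^+|_{X_0^+}$ really is the strict transform of $D|_{X_0}$.
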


\begin{proof}
We will repeatedly use the fact that $T$ is affine so that
for example a divisor is ample over $T$ if and only if it is ample.
For short, for a divisor $L$ on $X$ not containing $X_0$ in its
support, we denote $L_0 := L|_{X_0}$.

If $\psi$ is of fiber type, then it follows by the semicontinuity
of fiber dimension (applied to $\psi$) that $\psi_0$ is also of fiber type.
We can henceforth assume that $\psi$ is birational.
Note that this implies that $\psi_0$ is birational as well,
by the semicontinuity of fiber dimension applied this time
to the morphism $Z \to T$.

Note that as $N^1(X/T)\to N^1(X_0)$ is surjective,
any curve in $X_0$ that spans a $(K_X+D)$-negative extremal
ray of $\CNE(X/T)$ also spans a
$(K_{X_0}+D_0)$-negative extremal ray of $\CNE(X_0)$.
There is an ample $\Q$-divisor $H$ on $X$ such that
the contraction $\psi$ is defined
by $|m(K_X+D+H)|$ for any sufficiently divisible $m \ge 1$.
Since $K_X+D+H$ is nef and big and $X_0 \sim 0$, the restriction map
$$
H^0\big(\O_X(m(K_X+D+H))\big) \to H^0\big(\O_X(m(K_{X_0}+D_0+H_0))\big)
$$
is surjective if $m$ is sufficiently divisible by
Kawamata--Viehweg's vanishing theorem (cf. \cite[2.70]{KM}). This implies that
$\psi_0$ coincides with the map defined by $|m(K_{X_0}+D_0+H_0)|$
for all sufficiently divisible $m$,
and thus it is the extremal contraction of the ray in question.
In particular, $\psi_0$ has connected fibers and $Z_0$ is normal.

Assume that $\psi$ is a divisorial contraction and
let $G$ be the (irreducible) divisor contracted by $\psi$.
Then $G$ dominates $T$ and as $\psi (G)$ is irreducible of
dimension $\leq \dim Z-2$, all components of $G_0$ are contracted.
It follows from the injectivity of $N_1(X_0) \to N_1(X/T)$
that $\psi _0\colon X_0\to Z_0$ is an
extremal (divisorial) contraction of $G_0$, and
in particular $G_0$ is irreducible.

Assume that $\psi$ is a flipping contraction, and
suppose by way of contradiction that
$\psi _0$ is a divisorial contraction, so that $Z_0$ is $\Q$-factorial.
We consider two cases according to which hypothesis we choose.
If we are assuming that ${\rm SBs}(K_X+D/Z)$ does not contain any
component of $D_0$, then $(Z_0,(\psi _0)_*D_0)$ is canonical,
and thus $K_Z + \psi_*D$ is $\Q$-Cartier by Proposition~\ref{c-ext},
which is impossible since $\psi$ is a $(K_X + D)$-negative contraction.
Similarly, if we are assuming
that $D|_{X_0}-aK_{X_0}$ is nef over $Z_0$, then
$$
-(a+1)K_{X_0}=-(K_{X_0}+D|_{X_0})+(D|_{X_0}-aK_{X_0})
$$
is ample over $Z_0$. Since $a+1 > 0$, this implies that $Z_0$ is canonical, and hence
$K_Z$ is $\Q$-Cartier (again by Proposition~\ref{c-ext}),
which is impossible since $\psi$ is now $K_X$-negative.
Therefore $\psi _0$ is a flipping contraction.

Let
$$
\f\colon X\dasharrow X^+
:=\Proj_{Z}\Big(\bigoplus _{m\geq 0}\psi _*\O _X(m(K_{X}+D))\Big)
$$
be the flip and $D^+=f _*D$,
and let $p \colon Y \to X$, $q \colon Y \to X^+$ be a common resolution of $\f$.
Let $X_0^+$ and $Y_0$ be the strict transform of $X_0$ in, respectively, $X^+$ and $Y$.
Note that $X_0^+$ is normal (as $(X^+,D^+ + X_0^+)$ is purely log terminal), and
we can assume that the induced maps $p_0 \colon Y_0 \to X_0$, $q_0 \colon Y_0 \to X^+_0$
give a common resolution of the restriction $\f_0$ of $\f$ to $X_0$.
Since $\f$ is a $(X,D)$-flip, we have
$$
p^*(K_X+D) \sim_\Q q^*(K_{X^+} + D^+) + F,
$$
where $F$ is effective and dominates both $\Ex(\psi)$ and $\Ex(\psi^+)$.
Restricting to $Y_0$ and
using adjunction (note that the divisors $X_0$ and $X_0^+$ are linearly equivalent to
zero, so we can add them in the above relation), we obtain
$$
p_0^*(K_{X_0}+D|_{X_0}) \sim_\Q q_0^*(K_{X_0^+} + D^+|_{X_0^+}) + F|_{Y_0}.
$$
As $(X_0,D|_{X_0})$ is canonical, it follows that the inverse map
$\f_0^{-1} \colon X^+_0\dasharrow X_0$
contracts no divisors. Since $X_0\to Z_0$ also contracts no divisors,
it follows that $X_0^+ \to Z_0$ is a small contraction,
and thus $\f_0\colon X_0\dasharrow X^+_0$ is an isomorphism in codimension one
and $\phi \colon X \rat X^+$ is an isomorphism at every codimension one point of $X_0$.
In particular, $D^+|_{X^+_0}=\f_{0*}D|_{X_0}$. But then one sees that
$X^+_0$ is the log canonical model
of $(X_0,D|_{X_0})$ over $Z_0$, and hence $X^+_0\to Z_0$ is the flip of $X_0\to Z_0$.
The surjectivity $N^1(X^+/T)\to N^1(X^+_0)$ follows easily.
\end{proof}

\begin{rmk}
Note that, as we have seen in the proof, the condition that
$D|_{X_0} - a K_{X_0}$ is nef over $Z_0$ for some $a > -1$ considered in case~(c)
is equivalent to the condition that $-K_{X_0}$ is ample over $Z_0$.
\end{rmk}

\begin{rmk}
From the discussion in the next section, it follows that if $f\colon X \to T$
is a family of terminal $\Q$-factorial
Fano varieties, then one can always reduce by finite base change to
a setting where $N^1(X/T) \to N^1(X_0)$ is surjective.
Note, moreover, that for a family of Fano varieties, $D-aK_X$ is ample over $T$ for
every $a \gg 0$.
\end{rmk}

\begin{rmk}\label{rmk:F_2}
The necessity of the additional hypotheses in case~(c) is manifested
in the following example. Let $X \to T = \A^1$ be a flat family of
smooth 2-dimensional quadrics degenerating to
$\F_2 := \P(\O_{\P^1}\oplus\O_{\P^1}(-2))$, and let $E$ be the $(-2)$-curve
on the central fiber $X_0 = \F_2$ (note that $K_X\.E=0$).
A line in one of the two rulings on the general
fiber $X_t$ sweeps out, under deformation
over $T$, a divisor $R$ on $X$ such that $R|_{X_0} = E + F$,
where $F$ is a fiber of the projection $\F^2 \to \P^1$.
If $0 < \ep \ll 1$, then
$(X,\ep R)$ is a (log smooth) log Fano variety with terminal singularities.
The small contraction $X \to Z$ of $E$ is a
$(K_X + \ep R)$-negative extremal contraction, and restricts to a divisorial
contraction of $X_0$.
Therefore the conclusions of Theorem~\ref{p-1} fail in this example.
\end{rmk}

As an application, we prove the following extension theorem.

\begin{thm}\label{t-extf}
Let $f\colon X\to T$ be a flat projective morphism of normal varieties
where $T$ is an affine curve. Assume that for some
$0 \in T$ the fiber $X_0$ is normal
and $\Q$-factorial, $N^1(X/T)\to N^1(X_0)$ is
surjective, and $D$ is an effective $\Q$-divisor whose support
does not contain $X_0$ such that $(X_0,D|_{X_0})$ is a Kawamata log terminal pair
with canonical singularities. Assume that either $D|_{X_0}$ or $K_{X_0}+D|_{X_0}$
is big, and that one of the following two conditions is satisfied:
\begin{enumerate}
\item
${\rm SBs}(K_X+D)$ does not contain any component of ${\rm Supp}(D|_{X_0})$, or
\item
$D|_{X_0}-aK_{X_0}$ is ample for some $a>-1$.
\end{enumerate}
Let $L$ be an integral Weil $\Q$-Cartier divisor whose support of $L$ does not contain
$X_0$ and such that $L|_{X_0} \equiv  k(K_X+D)|_{X_0}$ for some rational number $k>1$.
Then the restriction map
$$
H^0(X,\OO _X(L))\to H^0(X_0,\OO _{X_0}(L|_{X_0}))
$$
is surjective.
\end{thm}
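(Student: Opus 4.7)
The plan is to run a relative $(K_X+D)$-MMP over $T$, use Theorem~\ref{p-1} at every step to transfer the process faithfully to the central fibre $X_0$, reduce to the case where $K_X+D$ is $f$-semiample, and then conclude by relative Kawamata--Viehweg vanishing on the output model, exploiting that $X_0\sim 0$ because $T$ is an affine curve.

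First I would shrink $T$ so that, by Proposition~\ref{c-ext}, $(X,D)$ is Kawamata log terminal with canonical singularities and $K_X+D$ is $\Q$-Cartier; by openness of bigness, the hypothesis that $D|_{X_0}$ or $K_{X_0}+D|_{X_0}$ is big upgrades to $f$-bigness of $D$ or of $K_X+D$. The results of \cite{BCHM} then produce a $(K_X+D)$-MMP over $T$ with scaling that terminates either with a log minimal model or with a Mori fibre space. In the Mori fibre space case $K_{X_0}+D|_{X_0}$ would not be pseudoeffective, so the condition $L|_{X_0}\equiv k(K_X+D)|_{X_0}$ with $k>1$ forces $H^0(X_0,\O_{X_0}(L|_{X_0}))=0$ and the statement becomes vacuous; hence we may assume termination at a log minimal model $(X',D')$. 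At each intermediate step I would invoke Theorem~\ref{p-1}(b) or~(c) to transfer the step to a corresponding step of the $(K_{X_0}+D|_{X_0})$-MMP on the central fibre. Hypotheses~(a) and~(b) of the present statement supply the positivity/stable-base-locus input needed for the flipping case in Theorem~\ref{p-1}(c), and they persist along the MMP: stable base loci only shrink, and $D|_{X_0}-aK_{X_0}$ ample remains ample over any contraction base. The surjectivity of $N^1(X/T)\to N^1(X_0)$, the $\Q$-factoriality of the central fibre, and the canonical KLT property on it are also preserved by Theorem~\ref{p-1}.

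On the final model, the relative base point free theorem yields that $K_{X'}+D'$ is $f'$-semiample. Letting $L'$ denote the proper transform of $L$, we still have $L'|_{X'_0}\equiv k(K_{X'}+D')|_{X'_0}$, and using the preserved surjectivity of $N^1(X'/T)\to N^1(X'_0)$ we lift this to conclude that $L'-K_{X'}$ is $f'$-nef and $f'$-big in a neighborhood of $0$. Relative Kawamata--Viehweg vanishing (applicable since $(X',D')$ is KLT) then gives $R^1f'_*\O_{X'}(L')=0$ there. Since $T$ is affine and $X'_0$ is Cartier with $X'_0\sim 0$ on $X'$, the short exact sequence
\[
0\to\O_{X'}(L'-X'_0)\to\O_{X'}(L')\to\O_{X'_0}(L'|_{X'_0})\to 0
\]
(whose exactness on the right uses the reflexive formalism of Proposition~\ref{prop:KM}) produces the surjectivity on $X'$. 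Pushing sections forward through the MMP maps, which by Theorem~\ref{p-1} do not contract divisors meeting a general point of the fibre, we descend the surjectivity back to $X$.

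The main obstacle I anticipate is precisely the step-by-step transfer of the MMP to the central fibre: everything hinges on being able to apply Theorem~\ref{p-1} throughout, which in turn requires verifying that hypotheses~(a) and~(b) of the present theorem are stable along flips and divisorial contractions. A secondary technical subtlety is that $L$ is only Weil $\Q$-Cartier and the relation $L|_{X_0}\equiv k(K_X+D)|_{X_0}$ is only numerical; these are handled via the persistent surjectivity of the N\'eron--Severi restriction and by working with reflexive sheaves as in Proposition~\ref{prop:KM}.
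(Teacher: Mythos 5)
Your proof follows the same route as the paper's: reduce via Proposition~\ref{c-ext} and the pseudoeffectivity of $K_{X_0}+D|_{X_0}$; run a relative $(K_X+D)$-MMP over $T$ (with scaling by $D-aK_X$ in case~(b)); transfer it step by step to the central fibre using Theorem~\ref{p-1}; identify the $H^0$'s on the original varieties with those on the minimal models; and conclude by vanishing plus a short exact sequence on the output model $X'$. The overall architecture is the paper's.

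There is, however, a real gap in the vanishing step. You deduce from $L'|_{X'_0}\equiv k(K_{X'}+D')|_{X'_0}$ that $L'-K_{X'}$ (which should read $L'-(K_{X'}+D')$, for Kawamata--Viehweg-type vanishing applied to the klt pair $(X',D')$) is nef and big near $0$. On $X'_0$ this class is $\equiv (k-1)(K_{X'}+D')|_{X'_0}$, which is nef because $X'_0$ is a minimal model, but it is big only when $K_{X'_0}+D'|_{X'_0}$ is big. The theorem's hypothesis also permits the case where only $D|_{X_0}$ is big and $K_{X_0}+D|_{X_0}$ is merely pseudoeffective; there your argument gives a nef but possibly non-big class, and the vanishing fails. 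The paper fixes this by decomposing $D'=G+H$ with $H$ a suitable ample $\Q$-divisor so that $(X',G)$ remains klt, so that $(L'-(K_{X'}+G))|_{X'_0}\equiv (k-1)(K_{X'}+D')|_{X'_0}+H|_{X'_0}$ is nef plus ample, hence nef and big. Your sketch does not account for this case. Two further steps that you assert but leave as sketches are genuinely nontrivial: spreading nef-and-big from $X'_0$ to an $f'$-neighbourhood is exactly the content of Lemma~\ref{lem:nef-big-open}, not a formality; and descending surjectivity from $X'$ back to $X$ requires the common-resolution computation $p^*L=q^*L'+E$ with $E\ge 0$ and $q$-exceptional (and its analogue over $X'_0$), which yields the crucial isomorphisms $H^0(X,\O_X(L))\cong H^0(X',\O_{X'}(L'))$ and $H^0(X_0,\O_{X_0}(L_0))\cong H^0(X'_0,\O_{X'_0}(L'_0))$; one must actually verify $E\ge 0$, which is where the $(K_X+D)$-negativity of each MMP step enters.
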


\begin{proof}
It follows by Proposition~\ref{c-ext} that $K_X$ and $K_X+D$ are $\Q$-Cartier
and $(X,D)$ is canonical in a neighborhood of $X_0$.
After possibly shrinking $T$ around $0$, we can assume that
$(X,D)$ is everywhere canonical and both $K_X$ and $D$ are $\Q$-Cartier.
In case~(b), we can also assume that $D - aK_X$ is ample.
Notice that this reduction does not affect the assumption
that $N^1(X/T)\to N^1(X_0)$ be surjective.
Note also that we can assume that
$K_{X_0}+D|_{X_0}$ is pseudo-effective, as otherwise there is nothing to prove.

We run a $(X,D)$-minimal model program over $T$
$$
X = X^0 \rat X^1 \rat X^2 \rat \dots
$$
For a divisor $A$ on $X$, we denote by $A^{i}$ its proper transform on $X^{i}$.
If we are in case~(b), then we run the minimal model program with scaling of $D-aK_{X}$,
whereas in case~(a) we proceed with scaling of some fixed ample divisor.

Using Theorem \ref{p-1}, one sees inductively that if we are in case~(a),
then for every $i$ the stable base locus ${\rm SBs}(K_{X^{i}}+D^{i})$ does not contain
any component of ${\rm Supp}(D^{i}|_{X_0^{i}})$. In case~(b),
the Mori contraction $\psi_i\colon X_i \to Z_i$ corresponding to the step
$X_i \rat X_{i+1}$ is $(K_{X^i} + D^i)$-negative and
$((K_{X^i} + D^i) + \l_i (D^i - aK_{X^i}))$-trivial for some $\l_i >0$, and thus
is $K_{X^i}$-negative since
$$
\l_i(1+a)K_{X^{i}}=(1+\l_i)(K_{X^{i}}+D^{i})- ((K_{X^i} + D^i) + \l_i(D^{i}-aK_{X^{i}})).
$$
In either case,
it follows by Theorem \ref{p-1} that each divisorial (resp., flipping) contraction on $X$
corresponds to a divisorial (resp., flipping) contraction on $X_0$, and the
corresponding divisorial or flipping contraction on $X$
induces a divisorial or flipping contraction on $X_0$.
Therefore the given $(X,D)$-minimal model program over $T$ induces a
$(X_0,D|_{X_0})$-minimal model program.

Since in case~(a) we are assuming that $K_{X_0}+D|_{X_0}$ is big,
and in case~(b) we have that $K_{X_0}+D|_{X_0}$ is pseudo-effective and $D|_{X_0}$ is big,
it follows by \cite{BCHM} that, in either case,
the $(X_0,D|_{X_0})$-minimal model program terminates,
which implies that the $(X,D)$-minimal model program also terminates.
Therefore, we have a $(X,D)$-minimal model $\psi \colon X \dasharrow X'$
which induces a minimal model $\psi _0 \colon X_0 \dasharrow X'_0$ for
$(X_0,D|_{X_0})$. Note that $\psi _*(K_X+D)|_{X'_0}=\psi _{0*}(K_{X_0}+D|_{X_0})$.

Let $p\colon Y\to X$ and $q\colon Y\to X'$ be a common resolution
of $\psi$, let $Y_0$
be the strict transform of $X_0$, and let $p_0=p|_{Y_0}$ and $q_0=q|_{Y_0}$.
Let $L'=\psi _*L$ and $L'_0=\psi _{0*}(L_0)$,
where $L_0 = L|_{X_0}$, so that $L'_0=L'|_{X'_0}$.
We then have that $p^*L=q^*L'+E$ and $p_0^*L_0=q^*L'_0+E_0$
where $E\geq 0 $ is $q$-exceptional and $E_0\geq 0$ is $q_0$-exceptional.
This implies that
$$
H^0(X,\O _X(L))\cong H^0(Y,\O _Y(\lceil p^* L \rceil ))=
H^0(Y,\O _Y(\lceil q^* L' +E\rceil ))\cong H^0(X',\O _{X'}(L'))
$$
and similarly that
$$
H^0(X_0,\O _{X_0}(L_0))\cong H^0(X'_0,\O _{X'_0}(L'_0)).
$$
(Here we have repeatedly used the fact that if $g\colon X\to W$ is a proper birational
morphism of normal varieties, $D$ is an integral $\Q$-Cartier divisor on $W$, and $F$
is an effective and $g$-exceptional $\Q$-divisor on $X$,
then $g_* \O _X(g^* D+F)=\O _Y(D )$.)

We have reduced ourselves to showing that
$H^0(X', \O_{X'}(L'))\to H^0(X'_0, \O_{X'_0}(L'_0))$ is surjective.
We will conclude this using the following two lemmas.
For short, let $D' = \phi _* D$.

\begin{lem}
There is a short exact sequence
$$
0\to \OO _{X'}(L'-X'_0)\to \OO _{X'}(L')\to \OO _{X'_0}(L'_0)\to 0.
$$
\end{lem}

\begin{proof}
By \cite[Proposition~5.26]{KM} and its proof, it suffices to show that
if $U \subseteq X'$ is the open subset where $L'$ is Cartier, then
$X_0' \cap (X \setminus U)$ has codimension at least 2 in $X_0'$.
This is clear, since $X_0$ is a normal Cartier divisor of $X'$,
and thus $X'$ is smooth at every codimension one point of $X_0'$.
\end{proof}

\begin{lem}
After possibly further shrinking $T$, we have $H^1(X', \OO _{X'}(L'-X'_0))=0$.
\end{lem}

\begin{proof}
Suppose first that we are in the case where $K_{X'}+D'$ is big. Note that it is also nef.
We observe that
$$
(L'-(K_{X'}+D'))|_{X_0'} \equiv (k-1)(K_{X'}+ D')|_{X'_0}
$$
is nef and big, and similarly
$$
\big(a(L'-(K_{X'}+D')) - (K_{X'}+D')\big)|_{X_0'} \equiv (a(k-1)-1)(K_{X'}+ D')|_{X'_0}
$$
is nef and big, provided $a > 1/(k-1)$.
Therefore, by Lemma~\ref{lem:nef-big-open}, the $\Q$-Cartier $\Q$-divisor
$L'-(K_{X'}+D')$ is nef and big in a neighborhood of $X_0$, and thus the
vanishing follows by \cite[Theorem~2.16]{Kol95}. In the case where
$D'$ is big over $T$, we may write $D'=G+H$ where $H$ is a suitable ample
$\Q$-divisor such that $(X',G)$ is Kawamata log terminal, and a similar argument
applied to this pair in place of $(X',D')$ gives the vanishing.
\end{proof}

We deduce from the two lemmas
that $H^0(X', \O_{X'}(L'))\to H^0(X'_0, \O_{X'_0}(L'_0))$ is surjective,
which completes the proof of the theorem.
\end{proof}

\begin{rmk}
We have the following related result due to Takayama (cf. \cite[4.1]{Tak07}):
If $S$ is a smooth divisor on a smooth variety $X$ and $L$ is a Cartier
divisor on $X$ such that $L\sim _{\Q}A+D$ where $A$ is ample,
$S$ is not contained in the support of $D$ and
$(S,D|_S)$ is Kawamata log terminal, then the natural map
$$
H^0(X,\O _X(m(K_X+S+L)))\to H^0(S,\O _S(m(K_S+L|_S)))
$$
is surjective for every $m>0$.
\end{rmk}

\begin{rmk}\label{rmk:F_2-ext}
The fact that in the example discussed in Remark~\ref{rmk:F_2} the
curve $E \subset X_0$ does not deform away from the central fiber
shows how the conclusion stated in
Theorem~\ref{t-extf} does not hold in general if one
does not impose any condition either on the stable base locus of $K_X + D$
or on the positivity of $D|_{X_0} - aK_{X_0}$ for some $a > -1$.
In fact, using the same example, one can also find counterexamples
to the extension property for divisors $K_X + D$ that are big over $T$.
In the following, we use the notations introduced in Remark~\ref{rmk:F_2}.
For any $b \ge 1$, consider a divisor $L$ on $X$ such that
$L|_{X_0} \sim (b+1)E + 2bF$. On a general fiber $X_t$ we have
$$
h^0(\O_{X_t}(L)) = h^0(\O_{\P^1 \times \P^1}(b+1,b-1)) = b^2 + 2b.
$$
On the other hand, on the central fiber we have
$$
|L|_{X_0}| = |b(E+2F)+E| = |b(E+2F)| + E,
$$
and since $|E+2F|$ is the linear system defining the morphism
$X_0 \to Z_0\subset \P^3$, we get
$$
h^0(\O_{X_0}(L)) = h^0(\O_{Z_0}(b)) = \binom{b+3}3 - \binom{b+1}3 = b^2 + 2b + 1.
$$
Therefore there are sections of $\O_{X_0}(L)$ that do not extend to $X$.
Note that $L$ is relatively big over $T$. Moreover, consider, for some $m \gg 1$,
a general $H \in |-\frac{b-1+2m}2 K_X|$ (note that $-K_X$ is semi-ample
and is divisible by 2 in $\Pic(X)$), and let $D := \frac 1m (H + 2R)$
(recall $R$ from Remark~\ref{rmk:F_2}). Then $(X,D)$ is a Kawamata log
terminal pair with terminal singularities and $L \sim m(K_X + D)$.
The reason why the mentioned theorems do not apply in this setting
is that the stable base locus of $L$ contains the curve $E$,
which is a component of the support of $D|_{X_0}$, and
there are no values of $a > -1$ (in fact, of $a \in \R$) such that
$D|_{X_0} - aK_{X_0}$ is ample.
\end{rmk}

\section{Nef values in families}\label{s-nef}

Given $\mathbb Q$-Cartier divisors $A$ and $B$ on a
normal projective variety $X$, with $A$ ample,
the {\it nef value} (or {\it nef threshold}) of $B$ with respect to $A$
is defined by
$$
\tau _A(B):= \min \{\lambda \in \R \mid \text{$B+\lambda A$ is nef}\}.
$$
This invariant is of particular interest in the adjoint case, namely when $B=K_X+D$
for some Kawamata log terminal pair $(X,D)$ such that $K_X+D$ is not nef.
In this case, it follows from the Cone Theorem that
$\tau=\tau _A(K_X+D) $ is rational and $K_X+D+\tau A$ is semiample.

We are interested in determining conditions that guarantee
that the nef values of adjoint divisors are constant in families.
More precisely, let $f\colon X\to T$ be a flat projective fibration
of normal varieties, where $T$ is a smooth affine curve, and suppose that
$D$ is an effective $\Q$-divisor on $X$ (whose support
does not contain any fiber of $f$) such that $K_X + D$ is $\Q$-Cartier.
Then one would like to find conditions on $f$ and $D$ which would
imply that for any ample line bundle $A$ on $X$ the nef values
$\tau _{A|_{X_t}}(K_{X_t}+D|_{X_t})$ are independent of $t$.

We know, for instance, by \cite{Wis} that if $f$ is a smooth morphism
and $D = 0$, then the nef values of the canonical class are constant in families
whenever nonnegative.
One can ask whether the same property holds more in general.
We consider the following setting

\begin{setting}\label{setting}
Let $f\colon X\to T$ be a flat projective fibration
from a normal variety $X$ to a smooth affine curve $T$.
For some $0 \in T$, assume that the fiber $X_0$ over $0$ is a normal
$\Q$-factorial variety, $N^1(X/T)\to N^1(X_0)$ is
surjective, and $D$ is an effective $\Q$-divisor on $X$ whose support
does not contain $X_0$ such that $(X_0,D|_{X_0})$
is a Kawamata log terminal pair with canonical singularities.
\end{setting}

\begin{question}\label{q:nef-value}
With the assumptions as in Setting~\ref{setting},
suppose that $K_{X_0} + D|_{X_0}$ is not nef,
and let $A$ be an ample line bundle on $X$.
Under which additional conditions can one insure that
the nef value $\tau _{A|_{X_t}}(K_{X_t}+D|_{X_t})$ is constant for
$t$ in a neighborhood of $0 \in T$?
\end{question}

As previously mentioned, a positive answer to this question
is known in the case $f$ is smooth and $D=0$ by the result of \cite{Wis}.
However, already the simple example discussed in Remark~\ref{rmk:F_2} shows that
in general one needs some additional conditions.
In this context, the following are two
natural conditions that we will consider in what follows:
\begin{enumerate}
\item
${\rm SBs}(K_X+D)$ does not contain any component of ${\rm Supp}(D|_{X_0})$, or
\item
$D|_{X_0}-aK_{X_0}$ is nef for some $a>-1$.
\end{enumerate}
We will see, in Corollary~\ref{cor:nef-value} that such conditions
are in fact sufficient to guarantee that
the nef value $\tau _{A|_{X_t}}(K_{X_t}+D|_{X_t})$ is constant
when the relative dimension of $f$ is at most $3$.

The following example shows, on the other hand, that in higher dimension
one needs further additional conditions to establish the constancy
of the nef values.

\begin{eg}\label{eg:Huybrechts}
Let $X_0$ and $X_0'$ be two
birational hyperk\"ahler manifolds of dimension $4$. By \cite{Huy03},
there exist smooth families $f\colon X\to T$ and
$f'\colon X'\to T$ that are isomorphic over $T \smallsetminus \{0\}$
and have central fibers $X_0$ and $X'_0$
respectively. Let $H'\geq 0$ be an $f'$-ample line bundle on $X'$ and $H$ its
strict transform on $X$. For any $0<\epsilon\ll 1$, the pair $(X,\epsilon H)$ is
terminal and it induces a flipping contraction $\psi \colon X\to Z$ which is an
isomorphism on the generic fiber.
\end{eg}

There are however several interesting situations where
the constancy of the nef values holds.
We will give two instances of this.
Before we can state our result, we need to introduce the right notions.
We start with the following definition
(for the definition of {\it volume} $\Vol(\x)$ of
a class $\x \in N^1(X)$ on a projective variety, we refer
to \cite[Section~2.2.C]{Laz}).

\begin{defi}\label{defi:vol-criterion-ampleness}
A projective variety $X$ is said to satisfy the
{\it volume criterion for ampleness} if the ampleness of
any given class $\x_0 \in N^1(X)$ is characterized by the existence
of an open neighborhood $U \subset N^1(X)$ of $\x_0$ (in the Euclidean topology)
such that $\Vol(\x)= \x^{\dim X}$ for every $\x \in U$.
\end{defi}

It was proven in \cite{HKP} that every projective toric variety
satisfies the volume criterion for ampleness, and
Lazarsfeld raised the question whether
the criterion holds for other classes of projective varieties, or for particular
types of divisors.
It is immediate to see that the volume criterion for ampleness
implies the criterion for ampleness via asymptotic growth of cohomologies
given in \cite{dFKL}, which is known to hold for all projective varieties.
In the case of smooth surfaces, the volume criterion
for ampleness is an elementary consequence of
the Zariski decomposition, but for which classes of higher dimensional varieties
it holds remains unknown.

The case of hyperk\"ahler manifolds gives
examples where the criterion does not hold. For instance,
if $X$ and $X'$ are two birational hyperk\"ahler manifolds of dimension
$4$, then they are isomorphic in codimension $2$ so that for every ample
class $H$ on $X$ one has $\Vol(H)=\Vol(H')=(H')^4$,
where $H'$ is the proper transform of $H$ on $X'$.
On the other hand, Section~1 and the proof of Theorem~4.6 of \cite{Huy97}
show that we have $(H')^4=H^4$, and therefore
$X$ does not satisfy the volume criterion for ampleness.
In fact, more generally, it follows by \cite[Proposition~4.12]{Bou04}
that on any hyperk\"ahler manifold $X$ the volume criterion
fails as soon as the {\it moving cone} $X$,
namely, the closure of the cone in $N^1(X)$ generated by movable divisors,
is strictly larger than the nef cone.

It is possible that, in the adjoint setting, suitable positivity assumptions
may give a better behavior. The following seems plausible.

\begin{question}
Does the volume criterion for ampleness hold for Fano varieties?
More generally, one can ask whether,
under the correct assumptions on singularities,
the criterion holds within the
the set $K_X + \Amp(X)$, where $\Amp(X)$ denotes the ample cone; or in other words,
whether it is true that a class $\x_0 \in K_X + \Amp(X)$ belongs to $\Amp(X)$ if and only
if there is an open neighborhood $U \subset N^1(X)$ of $\x_0$
such that $\Vol(\x) = \x^{\dim X}$ for every $\x \in U$.
\end{question}

The next definition will provide an alternative condition
to ensure the constancy of the nef values.

\begin{defi}\label{def:1-can}
A log pair $(X,D)$ is said to be {\it 1-canonical}
if $\ord_E(K_{X'/X} - g^*D) \ge 1$
for any log resolution $g \colon X' \to X$ of $X$
and any prime exceptional divisor $E$ on $X'$.
\end{defi}

Note that every normal variety with terminal singularities and such that
$K_X$ is Cartier is automatically 1-canonical
(that is, the pair $(X,0)$ is 1-canonical).

We now come back to the main setting of this section.
With the assumptions as in Setting~\ref{setting}, suppose that
$$
\psi \colon X\to Z
$$
is the contraction over $T$
of a $(K_X +D)$-negative extremal ray that is nontrivial
on the fiber $X_0$, and let $Z_0 = \psi(X_0)$.
By Theorem~\ref{p-1}, we know that
$$
\psi_0 := \psi|_{X_0} \colon X_0 \to Z_0
$$
is the contraction of a
$(K_{X_0} +D|_{X_0})$-negative extremal ray.

The following is the main result of this section.

\begin{thm}\label{t-nef}
With the above assumptions,
suppose that one of the following situations occurs:
\begin{enumerate}
\item
$\psi_0$ is of fiber type.
\item
$\psi_0$ is divisorial, and either
\begin{itemize}
\item
${\rm SBs}(K_X+D)$ does not contain any component of
${\rm Supp}(D|_{X_0})$, or
\item
$D|_{X_0}-aK_{X_0}$ is nef for some $a>-1$.
\end{itemize}
\item
$\psi_0$ is small, and either
\begin{enumerate}
\item[(i)]
$\psi_0$ has relative dimension $1$ (e.g., $\dim X_0 \le 3$), or
\item[(ii)]
$\dim X_0 = 4$ and $(X_0,D|_{X_0})$ is 1-canonical
(e.g., $D=0$ and $K_{X_0}$ is Cartier), or
\item[(iii)]
$X_0$ satisfies the volume criterion for ampleness (e.g., $X_0$ is toric).
\end{enumerate}
\end{enumerate}
Then the restriction of $\psi$ to
the generic fiber $X_\eta$ of $f$ is not an isomorphism.
\end{thm}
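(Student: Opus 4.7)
The plan is to argue by contradiction: assume $\psi_\eta := \psi|_{X_\eta}$ is an isomorphism, so $\psi$ is birational over $T$ with $\dim Z = \dim X$. For case (a), since $Z_0 = \psi(X_0)$ is the fiber of $Z \to T$ over $0$, upper semicontinuity of fiber dimension gives $\dim Z_0 \ge \dim Z - 1 = \dim X_0$, contradicting that $\psi_0$ is of fiber type. For cases (b) and (c), $\psi_\eta$ being an isomorphism forces the exceptional locus of $\psi$ to lie in $X_0$, and since $\psi_0$ does not contract $X_0$ (as $\dim Z_0 = \dim X_0$), we conclude that $\psi$ is small over $T$. In case (b), the inclusion $\SBs(K_X+D/Z) \subseteq \SBs(K_X+D)$ together with the fact that nef implies nef over $Z_0$ show that each of the two alternatives in the hypothesis of (b) implies the corresponding alternative of Theorem~\ref{p-1}(c); applying that theorem to the flipping $\psi$ forces $\psi_0$ to be flipping as well, contradicting the divisorial hypothesis.

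In case (c), both $\psi$ and $\psi_0$ are small, so Theorem~\ref{p-1}(c)'s divisorial-versus-flipping dichotomy no longer supplies the obstruction, and a dedicated argument is needed for each subcase. For (c)(i), every fiber of $\psi_0$, and hence of $\psi$, is one-dimensional. After shrinking $T$ so that $\O_X(X_0) \cong \O_X$, one has $N_{X_0/X} \cong \O_{X_0}$, and for a contracted extremal curve $C \subset X_0$ the normal bundle sequence
\begin{equation*}
0 \to N_{C/X_0} \to N_{C/X} \to \O_C \to 0
\end{equation*}
together with a deformation-theoretic argument (either using the splitting produced by the trivial factor or invoking a bend-and-break argument over the family) produces a deformation of $C$ transverse to $X_0$, yielding curves $C_t \subset X_t$ for $t$ near $0$ that are numerically equivalent to $C$ in $N_1(X/T)$ and hence contracted by $\psi$. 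Since the isomorphism locus of $\psi$ is open and contains $X_\eta$, we have $\psi|_{X_t}$ an isomorphism for $t \ne 0$ near $0$, yielding the desired contradiction.

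For (c)(iii), set $N = \psi^*H$ for $H$ ample on $Z$ over $T$, so that $N|_{X_\eta}$ is ample while $N|_{X_0}$ is nef but not ample. For any $B \in N^1(X_0)$ near $[N|_{X_0}]$, use the surjection $N^1(X/T) \to N^1(X_0)$ to choose a lift $\tilde B \in N^1(X/T)$ close to $[N]$. By openness of ampleness in flat families, $\tilde B|_{X_t}$ is ample for $t$ near the generic point, whence $\Vol(\tilde B|_{X_t}) = (\tilde B|_{X_t})^n$ where $n = \dim X_0$; combining upper semicontinuity of $h^0$ in the flat family with the constancy of intersection numbers then yields $\Vol(B) \ge B^n$. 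Refining this via the proximity of $\tilde B|_{X_0}$ to the nef class $N|_{X_0}$ to control any Nakayama--Zariski negative part, one obtains the equality $\Vol = \xi^n$ on a neighborhood of $[N|_{X_0}]$; the volume criterion for ampleness then forces $[N|_{X_0}]$ to be ample, contradicting the non-triviality of $\psi_0$.

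Subcase (c)(ii) is the most delicate. Since $\dim X_0 = 4$ and $\psi_0$ is small, its fibers have dimension at most two; the one-dimensional case reduces to (c)(i), leaving two-dimensional fibers $F \subset X_0$ to analyze. Here the $1$-canonical hypothesis ensures that the discrepancy along the blow-up of $F$ is at least one, and this positivity is meant to be combined with a normal bundle analysis analogous to (c)(i) and Kawamata--Viehweg vanishing to produce a deformation of $F$ transverse to $X_0$ that contradicts the isomorphism of $\psi_\eta$. The principal obstacle I anticipate is the careful synthesis of the $1$-canonical hypothesis with deformation theory of codimension-two subvarieties in the four-dimensional setting; this is precisely where the dimensional restriction $\dim X_0 = 4$ enters in a fundamental way.
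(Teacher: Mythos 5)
Your treatment of cases (a), (b) and (c)(iii) is broadly in line with the paper, but there are genuine gaps in (c)(i) and (c)(ii) that go beyond omitted details.

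For (a) and (b) your reduction is correct: if $\psi_\eta$ is an isomorphism then $\psi$ is birational, cannot contract a divisor (which would have to be $X_0$, forcing $\psi_0$ to be of fiber type), hence is small; then Theorem~\ref{p-1}(c) forces $\psi_0$ to be a flipping contraction, contradicting (a) or (b). Your observations that $\SBs(K_X+D/Z)\subseteq\SBs(K_X+D)$ and that nef implies nef over $Z_0$ are exactly the right translations of the hypotheses. For (c)(iii), your idea is in the same spirit as the paper's, but the detour through \emph{upper} semicontinuity of $h^0$ and Nakayama--Zariski negative parts is not needed: the paper reduces to $\dim\psi(\Ex(\psi))=0$, sets $L=K_X+D+s\psi^*H$ for $s\gg 1$ (after which $\SBs(K_X+D+s\psi^*H)$ avoids the components of $\Supp(D|_{X_0})$, so Theorem~\ref{t-extf} applies), and gets genuine \emph{constancy} of $h^0(\O_{X_t}(mL))$ in $t$. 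Since $L|_{X_t}$ is ample for $t\ne 0$ and intersection numbers are constant in the flat family, this gives $\Vol(\x|_{X_0})=\Vol(\x|_{X_t})=(\x|_{X_t})^n=(\x|_{X_0})^n$ for every $\x$ in a Euclidean neighborhood of $[L]$ in $N^1(X/T)$, directly contradicting the volume criterion since $L|_{X_0}$ is nef but not ample.

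The serious gap is in (c)(i). You propose to deform the contracted curve $C\subset X_0$ transversally to $X_0$ via the normal-bundle sequence $0\to N_{C/X_0}\to N_{C/X}\to\O_C\to 0$. This does not work as stated: even when $\O_C$ lifts to first order one needs an unobstructedness statement (e.g.\ $H^1(N_{C/X})=0$), which is not available for an arbitrary flipped curve in a terminal fourfold, and the curve $C$ may not be reduced/irreducible to begin with. The failure of exactly this type of deformation argument is well known (one should expect the appearance of a Mori/bend-and-break estimate $-K_X\cdot C\le \dim X+1$, which has nothing to force here since we are in the flipping, not Fano-fibration, setting). The paper instead argues cohomologically: with $L=K_X+D+s\psi^*H$ as above, flatness gives constancy of $\chi(\O_{X_t}(mL))$, and Theorem~\ref{t-extf} gives constancy of $h^0$; since $L|_{X_t}$ is ample for $t\ne 0$, Serre vanishing kills $h^i$ for $i>0$ there, so one only has to exhibit $\sum_{i\ge 1}(-1)^ih^i(\O_{X_0}(mL))\ne 0$. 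Using Leray for $\psi_0$ and Serre vanishing on $Z_0$ (with $s\gg m$), one computes $h^i(\O_{X_0}(mL))=h^0\bigl(R^i(\psi_0)_*\O_{X_0}(m(K_X+D))\otimes\O_{Z_0}(msH)\bigr)$; the one-dimensional fiber hypothesis kills $R^i$ for $i\ge 2$, while relative anti-ampleness of $K_X+D$ makes $R^1(\psi_0)_*$ a nonzero sheaf supported at points, so $h^1\ne 0$ and the contradiction follows.

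For (c)(ii) you do not actually give a proof. The paper's argument is of a completely different shape: by the reduction of (c)(i) one may assume the flipping contraction $\phi$ has $\dim\Ex(\phi)=2$. One passes to the flip $\phi^+\colon X^+\to Z$ (which exists by \cite{BCHM}), uses the inequality $\dim\Ex(\phi)+\dim\Ex(\phi^+)\ge\dim X-1=4$ from \cite[Lemma~5.1.17]{KMM87} and the fact that Theorem~\ref{p-1} identifies $\Ex(\phi_0^+)$ with $\Ex(\phi^+)$ to conclude $\dim\Ex(\phi^+)=2$; then a codimension-two component $W\subset\Ex(\phi^+)\subset X_0^+$ has minimal log discrepancy at its generic point at most $2$ by \cite{Ambro}, yielding on a common log resolution a prime exceptional divisor $F$ with $\ord_F(K_{Y/X_0^+}-q^*D^+|_{X_0^+})\le 1$. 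The flip inequality $p^*(K_{X_0}+D|_{X_0})\ge q^*(K_{X_0^+}+D^+|_{X_0^+})$, strict over $\Ex(\phi_0^+)$, then forces $\ord_F(K_{Y/X_0}-p^*D|_{X_0})<1$, contradicting $1$-canonicity. Your sketch of ``a normal bundle analysis analogous to (c)(i)'' for the codimension-two locus does not lead to this, and the $1$-canonical hypothesis is used here as a constraint on log discrepancies of the flip, not as a positivity input for a deformation argument.

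In short: (a), (b), (c)(iii) are essentially right (with (c)(iii) needing tightening), but (c)(i) and (c)(ii) require a fundamentally different mechanism --- the constancy of Euler characteristics and $h^0$'s provided by Theorem~\ref{t-extf}, combined with Leray/Serre vanishing in (c)(i) and the flip-exceptional-dimension estimate plus Ambro's minimal-log-discrepancy bound in (c)(ii) --- rather than first-order deformation of the contracted subvariety.
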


\begin{proof}
The cases in which $\psi _0$ is divisorial or of fiber type
follow from Theorem~\ref{p-1}.
We may therefore assume that $\psi _0$ is a flipping contraction.
We proceed by contradiction, and suppose that
$\psi |_{X_\eta}$ is an isomorphism.
After shrinking $T$, we may assume that $\psi$ is an isomorphism
on the complement of $X_0$.
After cutting down by the right number of
general hyperplane sections of $Z$, we can also assume
without loss of generality that $\dim \psi ({\rm Ex}(\psi ))=0$.
If $H$ is an ample divisor on $Z$,
then one sees that as $\psi$ is small,
for any $s\gg 1$, ${\rm SBs}(K_{X}+D+s\psi^*H)$
does not contain any components of ${\rm Supp}(D|_{X_0})$.

We fix an integer $s \gg 1$, and let
$$
L := K_{X}+D + s\psi^*H.
$$
After perturbing $H$, we can assume that $L$
is the multiple of a sufficiently general element of $N^1(X)_\Q$.
In the following, we let $m$ be a sufficiently divisible positive integer.
The key observation is that, by Theorem~\ref{t-extf}, the dimension
$h^0(\O _{X_t}(mL))$ is constant with respect to $t \in T$.

If $X_0$ satisfies the volume criterion for ampleness,
then we obtain immediately a contradiction. Indeed for $t \ne 0$
and some sufficiently small open neighborhood $U \subset N^1(X/T)$
of $[L]$, we have
$$
\Vol(\x|_{X_0}) = \Vol(\x|_{X_t}) = (\x|_{X_t})^{\dim X_t} = (\x|_{X_0})^{\dim X_0}
\ \fall \ \x \in U
$$
since $L|_{X_t}$ is ample, which contradicts the volume criterion on $X_0$.

Suppose now that the fibers of $\psi$ have dimension $\le 1$.
We observe that the Euler characteristic $\chi (\O _{X_t}(mL))$
is also constant with respect to $t \in T$, by flatness.
Thus, to prove the theorem in this case, it suffices to observe that
\begin{equation}\label{eq:chi+}
\sum_{i\ge 1} (-1)^i h^i(\O _{X_0}(mL)) \ne 0.
\end{equation}
This holds for the following reason. For $i > 0$ and $s \gg m$, we have
$$
h^i(X_0,\O _{X_0}(mL))
= h^0(Z_0,R^i(\psi _0) _* \O _{X_0}(m(K_X+D))\otimes \O _{Z_0}(msH))\\
$$
by the Leray spectral sequence and Serre vanishing.
In particular, if the fibers of $\psi$ have dimension at most one, then
$R^i(\psi _0) _* \O _{X_0}(m(K_X+D)) = 0$ for $i \ge 2$,
which implies that $h^i(\O _{X_0}(mL)) = 0$ for $i \ge 2$.
On the other hand, as $K_X+D$ is relatively anti-ample,
we have $R^1(\psi _0) _* \O _{X_0}(m(K_X+D)) \ne 0$,
and since this sheaf is supported on a zero dimensional set,
it follows that $h^1(\O _{X_0}(mL)) \ne 0$.
Equivalently, one can use the main result of \cite{dFKL} to deduce
that, for $m$ sufficiently divisible, the space $H^1(\O _{X_0}(mL))$ is nontrivial.
Therefore \eqref{eq:chi+} holds in this case.

It remains to consider the case when $X_0$ is a 4-dimensional variety
with 1-canonical singularities.
Since we have already excluded the case when $\psi$ has fibers of dimension $\le 1$,
the only possibility is that
$\phi$ is a $(K_X + D)$-flipping contraction with exceptional locus $\Ex(\phi)$
of dimension $2$.
Let
$$
\xymatrix@C=15pt@R=15pt{
X \ar[dr]_\phi \ar@{-->}[rr]^\psi && X^+ \ar[dl]^{\phi^+} \\
& Z
}
$$
be the flip over $T$, which exists by \cite{BCHM}, and let
$D^+$ be the proper transform of $D$ on $X^+$.
If $\Ex(\phi^+)$ is the exceptional locus of $\phi^+$, then we have that
$$
\dim \Ex(\phi) + \dim \Ex(\phi^+) \geq \dim X -1 =4
$$
by \cite[Lemma 5.1.17]{KMM87}, and therefore $\dim \Ex(\phi^+) \in \{2, 3\}$.

By Theorem~\ref{p-1}, $\psi$ restricts to a $(K_{X_0} + D|_{X_0})$-flip
$\psi_0 \colon X_0 \rat X_0^+$, where $X_0^+ \subset X^+$ is the fiber
of $\phi^+$ over $0$. Note that $\Ex(\phi_0) = \Ex(\phi)$, and similarly
$\Ex(\phi_0^+) = \Ex(\phi^+)$. In particular, $\dim \Ex(\phi^+) = 2$.
We fix a common resolution $Y$ of $\psi_0$,
with maps $p \colon Y \to X_0$ and $q\colon Y \to X^+_0$.

Let $W \subset X^+$ be a maximal dimensional irreducible component
of $\Ex(\phi^+)$. Since $\codim(W,X_0^+) = 2$, the minimal log discrepancy
of $X_0^+$ at the generic point of $W$ is at most $2$
(cf. \cite[Main Theorem~1]{Ambro}) and so a fortiori is the minimal log discrepancy
of $(X_0^+,D^+|_{X_0^+})$.
Recalling that minimal log discrepancies on log terminal
varieties can be computed from log resolutions,
this implies that there is a
prime exceptional divisor $F$ over $X^+_0$ (that we may
assume lying on $Y$) such that
$$
\ord_F(K_{Y/X^+_0} - q^*D^+|_{X_0^+}) \le 1.
$$
Since $\psi$ is a $(K_{X_0} + D|_{X_0})$-flip, this implies that
$$
\ord_F(K_{Y/X_0} - p^*D|_{X_0}) < 1.
$$
This contradicts the hypothesis that $(X_0,D|_{X_0})$ is 1-canonical.
The proof of the theorem is now complete.
\end{proof}

We have the following immediate consequence.

\begin{cor}\label{cor:nef-value}
With the same assumptions as in Setting~\ref{setting},
suppose that $K_{X_0} + D|_{X_0}$ is not nef,
and let $A$ be an ample line bundle on $X$.
Assume that we are in one of the following cases:
\begin{enumerate}
\item[(i)]
$\dim X_0 \le 3$.
\item[(ii)]
$\dim X_0 = 4$ and $(X_0,D|_{X_0})$ is 1-canonical
(e.g., $D=0$ and $K_{X_0}$ is Cartier).
\item[(iii)]
$X_0$ satisfies the volume criterion for ampleness (e.g., $X_0$ is toric).
\end{enumerate}
Then the nef value $\tau _{A|_{X_t}}(K_{X_t}+D|_{X_t})$ is constant for
$t$ in a neighborhood of $0 \in T$.
\end{cor}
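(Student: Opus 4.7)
The plan is to verify separately the inequalities $\tau_t \ge \tau_0$ and $\tau_t \le \tau_0$ for $t$ in a neighborhood of $0$, where $\tau_0 := \tau_{A|_{X_0}}(K_{X_0}+D|_{X_0}) > 0$. Throughout I write $L := K_X + D + \tau_0 A$, so the required equality $\tau_t = \tau_0$ amounts to $L|_{X_t}$ being nef but not ample.

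For the lower bound $\tau_t \ge \tau_0$ I would apply Theorem~\ref{t-nef} directly. The Cone Theorem applied to $(X_0, D|_{X_0})$ produces a $(K_{X_0}+D|_{X_0})$-negative extremal ray $R_0 \subset \CNE(X_0)$ with $L|_{X_0}\cdot R_0 = 0$. Using the surjectivity $N^1(X/T) \to N^1(X_0)$, the ray $R_0$ lifts to a $(K_X+D)$-negative extremal ray $R$ of $\CNE(X/T)$; since $(X,D)$ is Kawamata log terminal in a neighborhood of $X_0$ by Proposition~\ref{c-ext}, after shrinking $T$ around $0$ the relative contraction theorem provides a morphism $\psi\colon X \to Z$ over $T$ contracting $R$. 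Depending on the type of $\psi_0 = \psi|_{X_0}$ one invokes the appropriate case of Theorem~\ref{t-nef}: (a) if fiber type, (b) if divisorial, or (c) if small, the three subcases of (c) matching (i), (ii), (iii) of the corollary precisely. Hence $\psi|_{X_\eta}$ is not an isomorphism, and after further shrinking $T$ every $\psi|_{X_t}$ is a non-trivial $(K_{X_t}+D|_{X_t})$-negative extremal contraction by Theorem~\ref{p-1}, contracting a ray $R_t$ whose class in $N_1(X/T)$ agrees with $R$. Then $L\cdot R_t = L\cdot R = L|_{X_0}\cdot R_0 = 0$, so $L|_{X_t}$ fails to be ample on the ray $R_t$ and $\tau_t \ge \tau_0$.

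For the upper bound $\tau_t \le \tau_0$ I would show that $L|_{X_t}$ is nef by lifting the semi-ample fibration from $X_0$. By the Base-Point-Free Theorem applied to $(X_0,D|_{X_0})$ — exploiting that $L|_{X_0}$ is nef and $L|_{X_0}-(K_{X_0}+D|_{X_0}) = \tau_0 A|_{X_0}$ is ample — the linear system $|mL|_{X_0}|$ is base-point free for $m$ sufficiently divisible. Set $M := mL - (K_X + D) = (m-1)(K_X+D) + m\tau_0 A$, so that $M|_{X_0} = (m-1) L|_{X_0} + \tau_0 A|_{X_0}$ is nef and big; since $aM|_{X_0}-(K_{X_0}+D|_{X_0})$ is nef and big for $a$ large, Lemma~\ref{lem:nef-big-open} shows that $M|_{X_t}$ is nef and big for $t$ in a neighborhood of $0$, and after shrinking $T$ we conclude that $M$ is nef over $T$ and big on the generic fiber. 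Relative Kawamata--Viehweg vanishing then yields $R^1 f_*\OO_X(mL) = 0$, so the short exact sequence
$$
0 \to \OO_X(mL - X_0) \to \OO_X(mL) \to \OO_{X_0}(mL|_{X_0}) \to 0,
$$
combined with $X_0 \sim 0$ (after further shrinking $T$ if necessary), shows that the restriction $H^0(X,mL) \to H^0(X_0,mL|_{X_0})$ is surjective. Consequently $\Bs(|mL|) \cap X_0 = \emptyset$, so $|mL|$ is base-point free on a neighborhood of $X_0$; then $L|_{X_t}$ is nef for $t$ close to $0$, giving $\tau_t \le \tau_0$.

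The principal subtlety lies in the divisorial subcase of the lower bound: the hypotheses of Theorem~\ref{t-nef}(b) (on the stable base locus of $K_X+D$ or on the positivity of $D|_{X_0}-aK_{X_0}$) are not among those of the corollary, so when $\psi_0$ is divisorial one must deduce them from (i), (ii), or (iii). In the fiber-type and small cases, on the other hand, the reduction to Theorem~\ref{t-nef} is either automatic or a direct matching of hypotheses, so no additional work is needed.
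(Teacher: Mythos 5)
Your two-inequality structure (with Theorem~\ref{t-nef} driving the lower bound and an extension/vanishing argument driving the upper bound) is the right reading of the paper's ``immediate consequence'' remark, and the upper-bound argument is essentially fine. However, there are two points worth flagging, one of which you correctly notice but misdiagnose.

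You identify the real issue at the end: when $\psi_0$ is divisorial, case~(b) of Theorem~\ref{t-nef} requires either that $\SBs(K_X+D)$ contains no component of $\Supp(D|_{X_0})$, or that $D|_{X_0}-aK_{X_0}$ is nef for some $a>-1$. You then propose to ``deduce them from (i), (ii), or (iii)'' --- but this deduction is impossible: those conditions are logically independent of the three listed cases. Indeed, the $\F_2$ family of Remark~\ref{rmk:F_2}, with $X_0=\F_2$, $D=\ep R$, $D|_{X_0}=\ep(E+F)$, satisfies Setting~\ref{setting} and both (i) ($\dim X_0=2$) and (iii) ($\F_2$ is toric), yet for an ample $A$ with $A|_{X_0}=E+bF$ and $2<b<2+\ep/(2-\ep)$ a direct computation gives $\tau_0=\ep/(b-2)>2/(b-1)=\tau_t$: the nef value genuinely jumps. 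The resolution is not to derive the missing hypotheses but to recognize them as part of the corollary's intended hypotheses: the paragraph preceding Setting~\ref{setting} introduces exactly conditions (a)/(b) as ``conditions that we will consider in what follows'' and says they are ``in fact sufficient'' for Corollary~\ref{cor:nef-value}, and when the corollary is invoked in the proof of Theorem~\ref{thm:mori-chamber-dec} the stable-base-locus condition is explicitly arranged first. So these conditions are being assumed even though the displayed statement omits them; without one of them the statement is simply false, even in dimension two.

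A secondary gap is the assertion that $R_0$ ``lifts to a $(K_X+D)$-negative extremal ray $R$ of $\CNE(X/T)$.'' The surjection $N^1(X/T)\to N^1(X_0)$ gives an injection $N_1(X_0)\hookrightarrow N_1(X/T)$ under which $\CNE(X_0)$ sits inside $\CNE(X/T)$, but an extremal ray of a subcone need not remain extremal in the larger cone. You can avoid this entirely by proving the upper bound first, so that (after shrinking $T$) $L=K_X+D+\tau_0A$ is nef over $T$; then $L - (K_X+D)=\tau_0 A$ is ample, so the face $F=L^\perp\cap\CNE(X/T)$ is rational polyhedral, spanned by finitely many $(K_X+D)$-negative extremal rays of $\CNE(X/T)$, and is nonempty since $F$ contains the class of a curve in $X_0$ on which $L$ vanishes. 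One then applies Theorem~\ref{t-nef} to an extremal contraction supported on $F$ that is nontrivial on $X_0$; if a chosen ray gives the trivial contraction on $X_0$ (a flipping contraction whose exceptional locus misses $X_0$), shrink $T$ and repeat, which terminates by finiteness of $F$.
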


\begin{rmk}\label{rmk:hyperkahler}
Example~\ref{eg:Huybrechts} shows how
both Theorem~\ref{t-nef} and Corollary~\ref{cor:nef-value}
fail in general if one does not impose any additional hypothesis
to Setting~\ref{setting}, and also that
case~(ii) of both statements is sharp.
\end{rmk}

\section{Deformations of Mori chamber decompositions of Fano varieties}

It follows by results in \cite{BCHM} that any Fano variety $X$
with $\Q$-factorial log terminal singularities
is a {\it Mori dream space} in the sense of \cite{HK}.
In particular, the {\it moving cone} $\Mov^1(X) \subset N^1(X)$,
namely, the closure of the cone generated by movable divisors,
admits a finite decomposition into polyhedral cones,
called {\it Mori chamber decomposition}.
One of these chambers is the nef cone $\Nef(X)$ of $X$.
In general, a {\it Mori chamber} of $\Mov^1(X)$
is the closure of a Mori equivalence class
whose interior is open in $N^1(X)_\Q$, where we
declare that two elements $L_1$ and $L_2$ of
$N^1(X)_{\Q}$ are {\it Mori equivalent}
if $\Proj R(L_1) \cong \Proj R(L_2)$.

The Mori chamber decomposition of a Fano variety
retains information on the biregular and
birational geometry of $X$ both in terms of the log minimal models
obtainable from $X$ via suitable log minimal model programs.
Wall-crossing between two adjacent chambers of maximal dimension
corresponds to small modifications between corresponding
log minimal models.

Throughout this section, we consider a flat projective morphism
$$
f \colon X \to T
$$
onto a smooth curve $T$, whose fibers $X_t$ are Fano varieties
with $\Q$-factorial terminal singularities.

When $f$ is a smooth family of Fano manifolds,
then it follows by the result of Wi\'sniewski on nef values
that the nef cone and the Mori cone are locally constant
under small deformations of Fano manifolds \cite{Wis,Wis2}.
Here we are interested in the behavior of Mori chamber decompositions
of the fibers. We consider the following question.

\begin{question}\label{q:mori-chamber-dec}
With the above notation, under which conditions is the Mori chamber decomposition
of the fibers of $f$
locally constant over $T$ (in the analytic or \'etale topology)?
\end{question}

\begin{rmk}
In an earlier version of this paper, we had conjectured that the Mori chamber decomposition
of the fibers of $f$ is always locally constant without any additional condition.
However, this was disproved by Totaro (cf. \cite{Tot09}).
\end{rmk}

If we want study the behavior under deformation of the whole Mori chamber
decomposition, then it seems that allowing singularities
is a necessary generalization,
as we will need to apply steps of the minimal model program.

\begin{rmk}\label{eg-controexamples}
The example of a family of quadrics $\P^1 \times \P^1$ degenerating
to an irreducible quadric cone shows that the nef cone may jump
if one relaxes the assumptions on singularities from terminal to canonical.
The example of the
family of quadrics $\P^1 \times \P^1$ degenerating to an $\F_2$
discussed in Remark~\ref{rmk:F_2} shows
that one cannot hope for a positive answer to the question for families of
weak Fano varieties (even in the smooth case) or families of
log Fano varieties (even in the log-smooth case).
\end{rmk}

\begin{rmk}
In the Zariski topology, in general there
are no natural isomorphisms $N^1(X_t) \to N^1(X_u)$
and $N_1(X_t) \to N_1(X_u)$ for $t \ne u$ in $T$
unless one first fixes a path joining $t$ to $u$.
This is the case, for instance, for a quadric fibration $f \colon X \to T$
with all fibers isomorphic to a smooth quadric $\P^1\times\P^1$
and relative Picard number $\rho(X/T) = 1$
(an explicit example is given by the family of quadrics of equation
$\{xy + z^2 + tu^2 = 0\} \subset \P^3 \times \C^*$, where $(x,y,z,u)$
are the homogeneous coordinates on $\P^3$ and $t \in \C^*$).
\end{rmk}

One can restate Question~\ref{q:mori-chamber-dec} more precisely,
as we will see below.
We know by Corollary~\ref{cor:factoriality}
that a flat projective family over a smooth curve $f \colon X \to T$
of Fano varieties with terminal $\Q$-factorial singularities
satisfies the conditions in \cite[(12.2.1)]{KM92}.
We can therefore consider the local systems $\GN^1(X/T)$ and $\GN_1(X/T)$
introduced in \cite[Section~12]{KM92} (these local systems are defined
\cite{KM92} using rational coefficients; they will be considered here
with real coefficients).
These are sheaves on $T$ in the analytic topology.
For any analytic open set $U \subseteq T$, these are given by:
$$
\GN^1(X/T)(U) =
\left\{\text{sections of ${\mathcal N}^1(X/T)$ over $U$ with open support}
\right\},
$$
where, in our situation, ${\mathcal N}^1(X/T)$ is the
functor given by $N^1(X\times_TT'/T')$
for any $T' \to T$, and
$$
\GN_1(X/T)(U) =
\left\{
\substack{\displaystyle\text{flat families of 1-cycles
$C/U \subseteq X\times_TU$ with real}\\
\displaystyle\text{coefficients, modulo fiberwise numerical equivalence}}
\right\}.
$$
Note that, in our setting, every nonzero section of ${\mathcal N}^1(X/T)$
on an open set $U \subseteq T$ has open support.
It is shown in \cite[(12.2)]{KM92} that
$\GN^1(X/T)$ and $\GN_1(X/T)$ are dual local
systems with finite monodromy and, moreover, that
$\GN^1(X/T)|_t = N^1(X_t)$ and $\GN_1(X/T)|_t = N_1(X_t)$
for very general $t \in T$.
Applying~\cite[(12.1.1)]{KM92}, we obtain the following property.

\begin{prop}\label{p-N_1}
With the above assumptions, we have
$$
\GN^1(X/T)|_t = N^1(X_t) \ \and \ \GN_1(X/T)|_t = N_1(X_t)
$$
for all $t \in T$.
In particular, the Picard number $\rho(X_t)$ is independent of $t \in T$.
\end{prop}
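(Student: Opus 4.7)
The plan is to reduce the statement to a direct application of \cite[(12.1.1)]{KM92}. The local systems $\GN^1(X/T)$ and $\GN_1(X/T)$ from \cite[Section~12]{KM92} are a priori only well-defined under the assumptions recalled in \cite[(12.2.1)]{KM92}, namely that the total space $X$ be $\Q$-factorial in a neighborhood of each fiber $X_t$. Moreover, the cited result \cite[(12.1.1)]{KM92} computes the stalks of these local systems at \emph{every} $t \in T$ (not only at very general points) precisely under this hypothesis, so once it is verified we are done.

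The key input to check is therefore the $\Q$-factoriality of $X$ along each fiber. Here is where I would use Corollary~\ref{cor:factoriality}: each fiber $X_t$ is by assumption a $\Q$-factorial projective Fano variety with terminal singularities, and since $T$ is a smooth curve and $f$ is flat, $X_t$ is a normal (hence also by \cite[Corollary~5.12.7]{Gro} ensuring normality of the total space near $X_t$) Cartier divisor in $X$. Corollary~\ref{cor:factoriality} then gives that $X$ is $\Q$-factorial in a neighborhood of $X_t$, for every $t \in T$. This places us in the framework of \cite[(12.2.1)]{KM92}, so both $\GN^1(X/T)$ and $\GN_1(X/T)$ are defined and form a pair of dual local systems on $T$ with finite monodromy.

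Invoking \cite[(12.1.1)]{KM92} then yields $\GN^1(X/T)|_t = N^1(X_t)$ and $\GN_1(X/T)|_t = N_1(X_t)$ for every $t \in T$. The final assertion on the Picard number is immediate: since $T$ is connected and $\GN^1(X/T)$ is a local system, the rank of its stalks is constant, so $\rho(X_t) = \dim_\R N^1(X_t) = \rank \GN^1(X/T)$ does not depend on $t$. The only nontrivial step is the verification of $\Q$-factoriality of $X$ along each fiber; it is precisely here that the terminality assumption on the fibers is essential, since, as noted in the remark following Corollary~\ref{cor:factoriality}, the corresponding statement already fails for canonical singularities.
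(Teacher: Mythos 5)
Your reduction to verifying the hypothesis of \cite[(12.2.1)]{KM92} via Corollary~\ref{cor:factoriality} and terminality is correct and matches the discussion preceding the proposition in the paper. However, there is a genuine gap in the jump from there to the conclusion. Being in the framework of~(12.2.1) only gives that $\GN^1(X/T)$ and $\GN_1(X/T)$ are well-defined dual local systems with finite monodromy and that $\GN^1(X/T)|_t = N^1(X_t)$ holds for \emph{very general} $t$ --- this is the content of \cite[(12.2)]{KM92} as recorded right before the proposition. The result \cite[(12.1.1)]{KM92} is not a statement that, under the mere $\Q$-factoriality hypothesis of~(12.2.1), the stalks of $\GN^1(X/T)$ compute $N^1(X_t)$ at every $t$; it is a statement about the isomorphy of a restriction map $N^1(Y/T) \to H^2(Y_0,\R)$ that the paper applies only after passing to a log resolution $g\colon Y \to X$ of $(X,X_0)$ (so that $Y$ is smooth and, after a base change, $Y_0$ is a reduced simple normal crossings divisor).

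Concretely, the paper's proof has two further nontrivial steps that your argument skips. First, Lemma~\ref{lem:T^o} upgrades ``very general $t$'' to ``$t$ in a dense open $T^\circ$'' using Verdier's topological local triviality theorem together with the Fano property $\Pic(X_t) \cong H^2(X_t,\Z)$; this open set $T^\circ$ and the families of curves $C_{i,t}$ it furnishes over $T^\circ$ are essential ingredients in the second step. Second, for a possibly special point $0 \notin T^\circ$, the proof applies (12.1.1) to the resolution $Y$ and then transfers the resulting isomorphism $r'\colon N^1(Y/T) \to H^2(Y_0,\R)$ back to $X$ through a commutative diagram, using $g_*$ and the negativity lemma to obtain surjectivity of $N^1(X/T) \to N^1(X_0)$, and the curve families on $Y_0$ to obtain injectivity. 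Without these steps one does not know a priori that the stalk of the local system at a special $0$ agrees with $N^1(X_0)$ --- the a priori comparison map could fail to be an isomorphism precisely at the points where the family is not topologically locally trivial.
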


\begin{proof}
After taking a finite base change, we can assume that
$\GN^1(X/T)$ and $\GN_1(X/T)$ have trivial monodromy, so that,
in particular, there are natural identifications
between the fibers of $\GN^1(X/T)$ (resp. $\GN_1(X/T)$)
and $N^1(X/T)$ (resp. $N_1(X/T)$).

If $t \in T$ is very general, then the natural maps
$N^1(X/T) \to N^1(X_t)$ and $N_1(X_t) \to N_1(X/T)$
are isomorphisms, since, as we have already mentioned,
we have $\GN^1(X/T)|_t = N^1(X_t)$ and $\GN_1(X/T)|_t = N_1(X_t)$.
The following lemma implies that in fact this holds for a general $t \in T$.

\begin{lem}\label{lem:T^o}
There exists a nonempty open subset
$T^\o \subseteq T$ such that the natural maps
$N^1(X/T) \to N^1(X_t)$ and $N_1(X_t) \to N_1(X/T)$
are isomorphisms for every $t \in T^\o$.
\end{lem}

\begin{proof}
By Verdier's generalization
of Ehresmann's theorem \cite[Corollaire~(5.1)]{Ver}, there is a
nonempty open set $T^\o \subseteq T$ such that the restriction
$f^\o \colon X^\o \to T^\o$ of $f$ to $X^\o := f^{-1}(T^\o)$
is a topologically locally trivial fibration.

If $t \in T^\o$ is very general, then we know
that $N^1(X/T)_\Q \to N^1(X_t)_\Q$ is an isomorphism.
On the other hand, $\r(X_t)$ is constant for $t \in T^\o$,
since the fibers are Fano varieties and the fibration is
topologically locally trivial.
So to conclude, it suffices to show that $N^1(X/T)_\Q \to N^1(X_t)_\Q$
is surjective for every $t \in T^\o$.

Fix an arbitrary $t \in T^\o$, and let $\D \subseteq T^\o$ be
a contractible analytic neighborhood of $t$.
Let $f_\D \colon X_\D := f^{-1}(\D) \to \D$ be the restriction of $f$.
Since $f_\D$ is topologically locally trivial and $\D$ is contractible,
the restriction map $H^2(X_\D, \Z) \to H^2(X_t,\Z)$ is an isomorphism.
In particular, if $L_t$ is a line bundle on $X_t$, then $c_1(L_t)$
extends to a cycle $\g \in H^2(X_\D, \Z)$. The restriction $\g|_{X_u}$ of $\g$
to any other fiber $X_u$ of $f_\D$ is equal to the first Chern class of
some line bundle $L_u$, since $\Pic(X_u) \cong H^2(X_u,\Z)$.
As we can take $u \in \D$ to be a very general point of $T^\o$,
we can find a class $\x \in N^1(X/T)_\Q$ restricting to $[L_u]$.
After re-scaling, we can assume that $\x = [L]$ for some line bundle on $X$.
Using a topological trivialization $X_\D \approx X_u \times \D$
that induces an isomorphism $H^2(X_\D,\Z) \cong H^2(X_u,\Z)$
sending $\g$ to $\g|_{X_u}$, we see that $c_1(L)|_{X_\D} = \g$.
This implies that $c_1(L|_{X_t}) = c_1(L_t)$, and
hence that $L|_{X_t} = L_t$.

This proves that $N^1(X/T) \to N^1(X_t)$ is an isomorphism for every $t \in T^\o$.
The statement on $N_1(X_t) \to N_1(X/T)$ follows by duality.
\end{proof}

Back to the proof of the proposition,
we fix now an arbitrary point $0 \in T$.
If $0 \in T^\o$, then there is nothing to prove.
Suppose otherwise that $0 \not\in T^\o$.
After shrinking $T$, we may assume that $T^\o = T \setminus \{0\}$.
We also assume that $T$ is affine.
Shrinking further $T$ around $0$ if necessary, we can find a log resolution
$$
g \colon Y \to X
$$
of $(X,X_0)$, with the property that $N^1(Y^\o/T^\o) \to N^1(Y_t)$
is an isomorphism for all $t \in T^\o$, where $Y^\o = g^{-1}(X^\o)$.
By taking general complete intersections of very ample divisors forming
a basis of $N^1(Y^\o/T^\o)$, we can find families
of curves $C_{1,t}, \dots, C_{r,t} \subseteq Y_t := g^{-1}(X_t)$,
dominating $T^\o$, whose classes generate $N_1(Y_t)$
for every $t \in T^\o$. We deduce that any curve in a fiber of $Y \to T$
is numerically equivalent to a 1-cycle supported inside $Y_0$.

By construction, $Y_0$ is a divisor with simple normal crossing support.
After taking a base change, we may also assume that $Y_0$ is reduced.
Denote by $g_0 \colon Y_0 \to X_0$ the restriction of $g$, and
consider the commutative diagram
$$
\xymatrix{
N^1(Y/T) \ar[rr]^{r'} && H^2(Y_0,\R) \\
N^1(X/T) \ar@{^{(}->}[u]^{g^*} \ar[r]^r & N^1(X_0) \ar@{=}[r]
& H^2(X_0,\R) \ar@{^{(}->}[u]^{g_0^*}.
}
$$
The sheaf $\GN^1(Y/T)$ is a local system with trivial monodromy,
and thus is a locally constant sheaf. It follows that if $\D \subset T$ is
a contractible analytic neighborhood of $0$ and $Y_\D \subset Y$ is its inverse image, then
there are natural identifications (the second one induced by restriction)
$$
N^1(Y_{\D}/\D) = \GN^1(Y/T)(\D) = \GN^1(Y/T)(T) = N^1(Y/T).
$$
Therefore we can apply~\cite[(12.1.1)]{KM92},
which says that the restriction map $r'$ is an isomorphism.
We need to show that $r$ is an isomorphism as well.

If $\a \in N^1(X_0) = H^2(X_0,\R)$ is an integral point,
then we have $g_0^*\a = r'([L])$ for some line bundle $L$ on $Y$.
Observe that $L|_{Y_0}$ is numerically trivial over $X_0$.
Since any curve in a fiber of $g$
is numerically equivalent to a 1-cycle supported on a fiber of $g_0$, it
follows that $L$ is numerically trivial over $X$.
Considering $L$ as a divisor, we take the push-forward
$g_*L$, which is $\Q$-Cartier since $X$ is $\Q$-factorial.
Applying the negativity lemma to both $L - g^*g_*L$
and its opposite, we conclude that $L = g^*g_*L$.
It follows then by the injectivity of $g_0^*$ and the commutativity
of the diagram that $r([g_*L])= \a$. This proves that $r$ is surjective.

Let $\x \in N^1(X/T)$ be any nonzero element. Since $\x \neq 0$, there is a
curve $C$ in a fiber of $f$ such that $\x\.C \ne 0$.
Since any curve $C'$ on $Y$ mapping to $C$
is numerically equivalent to a 1-cycle supported inside $Y_0$,
it follows that $C$ is numerically equivalent to a 1-cycle (with rational
coefficients) $\g$ supported inside $X_0$. Since
$r(\x)\.\g = \x \. C \ne 0$, we conclude that $r(\x) \ne 0$.
This shows that $r$ is injective.

Therefore $N^1(X/T) \to N^1(X_0)$ is an isomorphism, and hence,
by duality, $N_1(X_0) \to N_1(X/T)$ is an isomorphism as well.
This proves the proposition.
\end{proof}

Using these local systems, the property sought
in Question~\ref{q:mori-chamber-dec} (including its
consequences on the behavior under deformations of
nef cones and Mori cones) can be restated as follows.
Suppose that $f \colon X \to T$ is a family satisfying the conclusions
of Question~\ref{q:mori-chamber-dec}.
Let $\r$ be the Picard number of a (equivalently, any) fiber of $f$.
Then there is a local system $\GG\S$ on $T$, with fibers equal to
a finite polyhedral decomposition $\S$ of a cone in $\R^\r$
(with a forgetful morphism $\S \to \R^\r$), and a map of local systems
$\GG\S \to \GN^1(X)$, such that the induced maps of fibers
$$
\S = \GG\S|_t \to \GN^1(X/T)|_t = N^1(X_t)
$$
gives the Mori chamber decomposition of $\Mov^1(X_t)$ for every $t \in T$.
In particular, there are local subsystems of cones
$\GNef(X/T) \subset \GN^1(X/T)$ and $\GNE(X/T) \subset \GN_1(X/T)$ with fibers
$$
\GNef(X/T)|_t = \Nef(X_t)
\ \and \
\GNE(X/T)|_t = \NE(X_t)
$$
for every $t \in T$.

\begin{rmk}
A positive answer to Question~\ref{q:mori-chamber-dec} would imply that,
for every $t,u \in T$ and every path $\gamma$ from $t$ to $u$,
there are natural isomorphisms $N^1(X_t) \to N^1(X_u)$ and $N_1(X_t) \to N_1(X_u)$
(depending on $\gamma$) compatible with the
above chamber decompositions and cones.
\end{rmk}

It was observed by Musta\c t\u a and Lazarsfeld that, as
a direct application of extension theorems,
in the hypotheses of the conjecture,
the pseudo-effective cones of the fibers of $f$ are locally constant in the family.
In other words, using the formalism introduced above, there is a
local subsystem of cones $\GPEff(X/T) \subset \GN^1(X/T)$ with fiber
$\GPEff(X/T)|_t = \PEff(X_t)$ for every $t \in T$.
For this result, one only needs a small generalization of Siu's invariance of
plurigenera, which is well-known (equivalently, one can apply Theorem~\ref{t-extf}).

In a similar vein, we have the following result for the moving cone.
Note that the Mori chamber decompositions are supported on the moving cones.

\begin{thm}\label{prop:movable-cones}
Let $f \colon X\to T$ be a flat projective family over a smooth curve
of Fano varieties with $\Q$-factorial terminal singularities.

Then there is a local subsystem of cones $\GMov^1(X/T) \subset \GN^1(X/T)$ with fiber
$$
\GMov^1(X/T)|_t = \Mov^1(X_t),
$$
the moving cone of $X_t$, for every $t \in T$.
\end{thm}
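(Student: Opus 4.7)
The plan is to mimic, in the spirit of the Lazarsfeld--Musta\c t\u a argument for pseudo-effective cones mentioned just before the statement, the proof via extension of sections, but applied to stable base loci rather than to non-vanishing of $h^0$. The key tool is Theorem~\ref{t-extf}, which the Fano hypothesis allows to invoke quite freely, since $-K_{X_0}$ is ample, so the positivity hypothesis~(b) is easy to verify.

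By Proposition~\ref{p-N_1}, after a finite \'etale base change and shrinking $T$ around $0$, I identify $N^1(X_t) = N^1(X/T)$ canonically for every $t \in T$; under this identification the statement reduces to showing $\Mov^1(X_0) = \Mov^1(X_t)$ for all $t$ near $0$. For the inclusion $\Mov^1(X_0) \subseteq \Mov^1(X_t)$, I fix a class $\alpha$ in the interior of $\Mov^1(X_0)$, handling the boundary by a limit argument (the cone $\Mov^1(X_0)$ is rational polyhedral since $X_0$ is a Mori dream space by \cite{BCHM}). Such $\alpha$ admits a big integral representative $L_0$ on $X_0$ with $\SBs(L_0)$ of codimension at least two, and using surjectivity of $N^1(X/T)\to N^1(X_0)$ I lift $\alpha$ to an integral Weil $\Q$-Cartier divisor $L$ on $X$ with $L|_{X_0} \equiv L_0$. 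To feed $L$ into Theorem~\ref{t-extf} I need an auxiliary pair $(X,\Delta)$ with $L|_{X_0} \equiv k(K_X+\Delta)|_{X_0}$ for some $k > 1$ and $(X_0,\Delta|_{X_0})$ Kawamata log terminal with canonical singularities: since $-K_{X_0}$ is ample, for $k \gg 1$ the class $L_0/k - K_{X_0}$ is ample, and I take $\Delta_0$ to be a general small multiple of a general member of a base point free multiple of $L_0/k - K_{X_0}$, which by Bertini gives canonical singularities, then lift $\Delta_0$ to an effective $\Q$-divisor $\Delta$ on $X$. Theorem~\ref{t-extf} then yields the surjection $H^0(X,\O_X(mL)) \twoheadrightarrow H^0(X_0,\O_{X_0}(mL|_{X_0}))$ for all sufficiently divisible $m$.

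The surjection translates into $\Bs(|mL|)\cap X_0 = \Bs(|mL|_{X_0}|)$, which has codimension at least two in $X_0$. From this I would rule out divisorial components of $\SBs(L) \subseteq X$: a divisorial component dominating $T$ would descend to a divisor in $X_0$ contained in $\Bs(|mL|)\cap X_0$, contradicting the codimension estimate; a divisorial component contained in a fiber $X_s$ with $s\ne 0$ is avoided by shrinking $T$; and $X_0$ itself cannot be a component of $\SBs(L)$ because that would force the extension map to be zero, yet its target is nonzero. Hence $\SBs(L)$ has codimension at least two in a neighborhood of $X_0$, and since $\SBs(L|_{X_t}) \subseteq \SBs(L)\cap X_t$, I conclude that $L|_{X_t}$ is movable for every $t$ near $0$. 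For the reverse inclusion I apply the same argument symmetrically: by Proposition~\ref{thm:deform-log-fanos} together with Corollary~\ref{cor:factoriality}, each $X_t$ is Fano with $\Q$-factorial terminal singularities for $t$ near $0$, so the forward argument with $0$ replaced by $t$ yields $\Mov^1(X_t) \subseteq \Mov^1(X_s)$ for $s$ in a neighborhood of $t$; after a final shrinking of $T$ around $0$, $0$ lies in this neighborhood for every $t \in T$, and we obtain $\Mov^1(X_t) = \Mov^1(X_0)$ for all such $t$, defining the required local subsystem $\GMov^1(X/T)$.

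The main obstacle is the construction of the auxiliary pair $(X,\Delta)$ certifying the hypotheses of Theorem~\ref{t-extf}: one must produce an effective $\Q$-divisor $\Delta$ on $X$ whose restriction $\Delta|_{X_0}$ is a carefully chosen ample representative of $L_0/k - K_{X_0}$ making $(X_0,\Delta|_{X_0})$ Kawamata log terminal with canonical singularities, while matching the prescribed numerical class, for which lifting a general element of an appropriate base point free linear system via the N\'eron--Severi surjectivity is the key device. A secondary delicate point is the uniform shrinking in the symmetric argument for the reverse inclusion, which requires that the neighborhoods supplied by the forward argument around each $t \in T$ simultaneously contain $0$.
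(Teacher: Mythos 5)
Your forward inclusion $\Mov^1(X_0) \subseteq \Mov^1(X_t)$ is essentially the argument the paper gives for the ``easy'' direction: apply Theorem~\ref{t-extf} to extend sections of any given line bundle from $X_0$ to $X$, deduce that the base locus of $L$ meets $X_0$ in codimension $\geq 2$, and conclude that $L|_{X_t}$ is movable for $t$ near $0$. Your device of producing the auxiliary pair $(X,\Delta)$ -- taking $k\gg 1$ so that $L_0/k - K_{X_0}$ is ample, choosing $\Delta$ as a general member of a multiple of $L/k - K_X$ (which is relatively ample after shrinking, by surjectivity of $N^1(X/T)\to N^1(X_0)$), and applying Bertini -- is the same mechanism the paper relies on implicitly, and is fine as far as it goes.

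The reverse inclusion, however, is where your proposal has a genuine gap, and it is not a ``secondary delicate point'' as you describe it: it is the heart of the matter. Your plan is to apply the forward argument centred at each $t$ near $0$, getting, for each such $t$, a neighborhood $U(t)\ni t$ with $\Mov^1(X_t) \subseteq \Mov^1(X_s)$ for $s \in U(t)$, and then to ``shrink $T$ so that $0 \in U(t)$ for all $t$.'' Nothing in the forward argument gives any control on how the neighborhoods $U(t)$ vary with $t$; in particular they may well shrink to $\{t\}$ as $t\to 0$. The forward argument, applied at $t\ne 0$, says nothing about the special fiber $X_0$. What is happening geometrically is that the stable base locus of a line bundle can jump \emph{up} in codimension~$1$ at the special fiber, so the moving cone can a priori jump \emph{down} there, and the extension theorem alone does not rule this out. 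The paper handles this direction by an entirely different argument: assuming for contradiction that $L|_{X_0}\notin\Mov^1(X_0)$ while $L|_{X_t}$ is movable for $t\ne 0$, one writes $K_X + D \sim_\Q \lambda L$ for a suitable klt canonical pair $(X,D)$, runs a relative $(K_X+D)$-MMP over $T$ directed by $D-aK_X$ for $a\gg 0$, and observes via Theorem~\ref{p-1} that this MMP is a sequence of flips on the general fiber (since $\SBs(L|_{X_t})$ contains no divisor) yet must at some step contract a divisorial component of $\SBs(L|_{X_0})$ on the central fiber; part~(b) of Theorem~\ref{p-1} shows a divisorial contraction on $X_0$ would have to come from a divisorial contraction on $X$, which is a contradiction. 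Without this MMP mechanism, or an equivalent substitute, the reverse inclusion is not established by your argument.
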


\begin{proof}
After base change, we can assume without loss of generality that
$\GN^1(X/T)$ has trivial monodromy, and thus the natural homomorphism
$N^1(X/T) \to N^1(X_t)$ is an isomorphism for every $t \in T$
by Proposition~\ref{p-N_1}.

Since by Theorem~\ref{t-extf} all the sections of the restriction to $X_0$
of any line bundle $L$ extend to $X$, it follows that if $|L|_{X_0}|$
is a movable linear system, then so is $|L|_{X_t}|$ for every $t$ near $0$.
Thus, to prove the proposition, we need to show that if $L$ is a relatively big line
bundle whose restriction $L|_{X_t}$ is in the interior of $\Mov^1(X_t)$
for every $t \ne 0$, then $L|_{X_0}$ is movable as well.

Suppose otherwise that $L|_{X_0}$ is not movable.
After perturbing $L$ and re-scaling, we may assume that $L|_{X_0} \not\in \Mov^1(X_0)$.
We can find an effective $\Q$-divisor $D$ on $X$ such that
$K_X + D \sim_\Q \lambda L$ for some $\lambda > 0$ and
$(X,D)$ is a Kawamata log terminal pair with canonical singularities.
We fix $a \gg 0$ and run a minimal model program for $(X,D)$
directed by $D - aK_X$. On a general fiber $X_t$ ($t \ne 0$)
this minimal model program is a composition of flips, as the
stable base locus of $L|_{X_t}$ does not contain any divisor.
On the other hand, on the central fiber $X_0$ the induced
minimal model program (cf. Theorem~\ref{p-1})
must contract, at some point, the divisorial components of the stable base
locus of $L|_{X_0}$. Since $-K_{X_0}$ is relatively ample with respect to
any extremal contraction occurring in such minimal model program,
at each stage the central fiber (i.e., the proper transform of $X_0$)
has terminal singularities.
Therefore, once we reach the step where a divisor on the central fiber
is being contracted, we obtain a contradiction with part~(b) of Theorem~\ref{p-1}.
\end{proof}

A partial answer to Question~\ref{q:mori-chamber-dec} comes from the following result.
In view of Totaro's examples (cf. \cite{Tot09}), the conditions imposed
in cases~(a) or~(b) would seem to be optimal.

\begin{thm}\label{thm:mori-chamber-dec}
Question~\ref{q:mori-chamber-dec} has a positive answer in the following cases:
\begin{enumerate}
\item[(i)]
$\dim X_0 \le 3$.
\item[(ii)]
$\dim X_0 = 4$ and $X_0$ is 1-canonical (e.g., $K_{X_0}$ is Cartier).
\item[(iii)]
$X_0$ is toric.
\end{enumerate}
\end{thm}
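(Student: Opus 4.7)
The plan is to identify the Mori chamber decomposition of $X_0$ with that of $X_t$ for $t$ near $0$ via the canonical isomorphism $N^1(X_0)\cong N^1(X/T)\cong N^1(X_t)$ furnished by Proposition~\ref{p-N_1}. After a finite base change trivializing the monodromy of $\GN^1(X/T)$, and after shrinking $T$ so that $-K_X$ is relatively ample (possible because $-K_{X_0}$ is ample and ampleness is an open condition), these identifications become canonical and Theorem~\ref{prop:movable-cones} ensures that the moving cone is locally constant inside $N^1(X/T)$. Since each $X_t$ is a Mori dream space by \cite{BCHM}, its Mori chamber decomposition of $\Mov^1(X_t)$ is recorded by the finitely many small $\Q$-factorial modifications (SQMs) $X_t\rat Y_t$ together with the positions of the cones $\Nef(Y_t)$ inside $N^1(X/T)$. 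It therefore suffices to produce a bijection between the SQMs of $X_0$ and those of $X_t$, realized through a common family of SQMs of $X/T$, and to verify that corresponding nef cones coincide inside $N^1(X/T)$.

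The first step is to establish $\CNE(X_0)=\CNE(X_t)$ as subcones of $N_1(X/T)$. Let $R$ be an extremal ray of $\CNE(X_0)$, and choose a small effective $\Q$-divisor $D$ on $X$ so that $(X_0,D|_{X_0})$ is Kawamata log terminal with canonical singularities (and furthermore 1-canonical in case~(ii)); then $R$ is $(K_{X_0}+D|_{X_0})$-negative, and viewing $R$ in $N_1(X/T)$ it is a $(K_X+D)$-negative extremal ray of $\CNE(X/T)$, contracted over $T$ by a morphism $\psi\colon X\to Z$. Only small contractions arise in the Mori chamber analysis inside $\Mov^1$, so the relevant portion of Theorem~\ref{t-nef} is case~(c): case~(c)(i) applies in our case~(i) because small contractions in dimension $\le 3$ have fibers of dimension $\le 1$; case~(c)(ii) applies in~(ii); and case~(c)(iii) applies in~(iii) because toric varieties satisfy the volume criterion for ampleness. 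Hence $\psi$ is nontrivial on the generic fiber, and Theorem~\ref{p-1}(c) yields that $R$ is also an extremal ray of $\CNE(X_t)$. By symmetry (exchanging the roles of $0$ and $t$) the reverse inclusion holds, giving $\CNE(X_0)=\CNE(X_t)$ and dually $\Nef(X_0)=\Nef(X_t)$ in $N^1(X/T)$.

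To extend the argument to the entire chamber decomposition, we lift each of the finitely many SQMs $X_0\rat Y_0$ of $X_0$. Such $Y_0$ is reached from $X_0$ by a finite sequence of flips, each being a $(K+D^{(i)})$-flipping contraction for some small effective KLT boundary $D^{(i)}$; such boundaries always exist because every intermediate model is a weak Fano with $\Q$-factorial terminal singularities. Iterating the Mori cone argument of the previous paragraph together with the existence of flips from \cite{BCHM} and Theorem~\ref{p-1}(c) produces, possibly after further shrinking $T$, an SQM $X\rat Y$ over $T$ whose central fiber is $Y_0$ and whose general fiber is an SQM $Y_t$ of $X_t$; the hypotheses of Theorem~\ref{t-nef} remain valid at each intermediate stage because flips preserve the $\Q$-factorial terminal, 1-canonical, and toric structures. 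Applying the cone argument to $Y/T$ in place of $X/T$ yields $\Nef(Y_0)=\Nef(Y_t)$ inside $N^1(X/T)$. Since the cones $\Nef(Y_0)$, ranging over the SQMs of $X_0$, cover $\Mov^1(X_0)=\Mov^1(X_t)$, the corresponding cones $\Nef(Y_t)$ cover $\Mov^1(X_t)$; as each $\Nef(Y_t)$ is itself a Mori chamber of $X_t$, they must exhaust the entire Mori chamber decomposition of $X_t$, completing the desired matching. The main technical difficulty is the preservation of the hypotheses of Theorem~\ref{t-nef} through the iterated lifting of flips to families, which is precisely what restricts the argument to the three cases listed in the statement.
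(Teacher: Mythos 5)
Your proposal takes a genuinely different route from the paper's. The paper's proof is a contradiction argument: by iterating Theorem~\ref{p-1} along relative MMPs it first arranges that away from $X_0$ the Mori chamber decomposition of the fibers agrees with that of $N^1(X/T)$, so that $\S_0$ is a priori a \emph{refinement} of the decomposition $i_0^*(\S)$ coming from the family. Assuming the refinement is strict, it selects a big $\Q$-divisor $\D$ lying on an extra wall of $X_0$, perturbs by a small ample $A$, runs a relative $(X,D+\ep A)$-MMP to a minimal model $Z$ over $T$, and obtains a contradiction because $K_{Z_t}+\phi_*(D-\ep A)|_{Z_t}$ is ample for $t\ne 0$ but not nef on $Z_0$, violating Corollary~\ref{cor:nef-value}. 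Thus the paper invokes Theorem~\ref{t-nef} only once, through Corollary~\ref{cor:nef-value}, at the single model $Z$. You instead try to build a direct bijection of SQMs by lifting extremal rays of $\CNE(X_0)$ one at a time and iterating over all small modifications; this is more constructive and closer in spirit to the Mori-dream-space combinatorics, but requires Theorem~\ref{t-nef} to be applied repeatedly along every wall-crossing and on every intermediate model.

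There are a few concrete gaps. First, you view an extremal ray $R$ of $\CNE(X_0)$ inside $N_1(X/T)$ and assert that it is an extremal ray of $\CNE(X/T)$ contracted by some $\psi \colon X \to Z$. This is not automatic: the image of $\CNE(X_0)$ under $N_1(X_0)\cong N_1(X/T)$ need not coincide with $\CNE(X/T)$, and an extremal ray could a priori lie in the interior of a larger cone. One first needs $\CNE(X/T)=\CNE(X_0)$, i.e.\ $\Nef(X_0)\subseteq\Nef(X_t)$; this follows by applying Theorem~\ref{t-extf} to the finitely many semi-ample generators of the rational polyhedral cone $\Nef(X_0)$, but you do not supply this step. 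Second, the claim that ``only small contractions arise'' is in tension with the preceding sentence: extremal rays of $\CNE(X_0)$ can be of divisorial or fiber type, and for those one needs cases (a)--(b) of Theorem~\ref{t-nef} (which are available here since $X_0$ is Fano, so $D|_{X_0}-aK_{X_0}$ is ample for $a\gg 0$, but this should be said). Third, in the iteration over SQMs you assume without proof that the intermediate models $Y_0$ still satisfy the hypotheses of Theorem~\ref{t-nef}: that an appropriate boundary with canonical singularities exists on each $(Y,\,\cdot\,)$, and that the 1-canonical and toric conditions persist through $(K+D)$-flips. The toric case is clear (toric flips stay toric), but the 1-canonical claim is not, since the Cartier index of $K$ need not be manifestly preserved by a $(K+D)$-flip; this is precisely the kind of difficulty that the paper's single-MMP contradiction argument avoids.
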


\begin{proof}
We first observe that each property assumed in the above cases
(i)--(iii)
is preserved throughout the steps of a minimal model program of $X$ over $T$.
After base change, we can assume without loss of generality that
$\GN^1(X/T)$ has trivial monodromy and the natural homomorphism
$i_t^* \colon N^1(X/T) \to N^1(X_t)$ is an isomorphism for every $t \in T$.

After shrinking $T$ around $0$, we can assume that
the Mori chamber decomposition of the fibers of $f$ is constant
away from the central fiber,
so that there is a finite polyhedral decomposition $\S$ of a cone in
$N^1(X/T)$ which induces, for $t \ne 0$, the Mori chamber decomposition
$\S_t$ of $N^1(X_t)$.

Indeed, by running log minimal model programs for
Kawamata log terminal pairs $(X,D)$
such that $K_X + D$ is in the interior of $\Mov^1(X/T)$,
and applying Theorem~\ref{p-1},
we see that the Mori chamber decomposition of $\Mov^1(X/T)$ is a refinement of
the Mori chamber decomposition of $\Mov^1(X_t)$ for any fiber $X_t$.
On the other hand, suppose that $K_X + D$ lies on a wall between two
Mori chambers of $\Mov^1(X/T)$ while, for some $t \in T$, the restriction
$K_{X_t} + D|_{X_t}$ lies in the interior of a Mori chamber of $\Mov^1(X_t)$.
Fix a general relatively ample effective divisor $A$, so that
$K_X + D + \ep A$ is in the interior of a Mori chamber of $\Mov^1(X/T)$ for
all $0 < \ep \ll 1$, and fix such an $\ep$. Then, working over $T$,
the contraction $Z \to W$ from the minimal model $Z$ of $(X,D+\ep A)$
to the canonical model $W$ of $(X,D)$
is as isomorphism on $X_t$ for $t\in T$ chosen as above,
and hence it is an isomorphism
over a dense open subset of $T$. Using the fact
that the decomposition of $\Mov^1(X/T)$ is finite,
we can eliminate all the fibers, other than $X_0$, on which these
type of contractions are nontrivial, to reduce to the situation where the
Mori chamber decomposition of the fibers of $f$ is constant
away from the central fiber.

By Theorem~\ref{prop:movable-cones}, the cone of
movable divisors of $X_t$ is locally constant, and thus
the Mori chamber decomposition $\S_0$
of $N^1(X_0)$ is supported on the same cone
which supports the decomposition $i_0^*(\S)$ induced by $\S$ via the
isomorphism $i_0^* \colon N^1(X/T) \to N^1(X_0)$.
A priori, the Mori chamber decomposition $\S_0$
is a refinement of the decomposition $i_0^*(\S)$,
and the statement of the theorem is that the two decompositions agree.

Suppose by contradiction that $\S_0$ is finer than $i_0^*(\S)$.
Let $\D$ be an effective big $\Q$-divisor on $X$ not containing any
of the fibers of $f$ and such that $\D|_{X_t}$ is in the
interior of a Mori chamber $\M_t$ of $X_t$ if $t \ne 0$, whereas $\D|_{X_0}$
lies on a wall separating two contiguous Mori chambers
$\M_0$ and $\M_0'$ of $X_0$.
We can assume that $\D = A + B$, where $A$ is an effective ample $\Q$-divisor
and $B$ is an effective $\Q$-divisor. If $A$ is chosen generally,
then we can furthermore assume that every small perturbation $(\D + r A)|_{X_0}$,
for $r \ne 0$,
of $\D|_{X_0}$ does not lie on the wall separating $\M_0$ and $\M_0'$.

For $m$ sufficiently divisible, we fix a general $H \in |-K_X|$,
and consider the divisor $D := \frac 1m H + \D$.
After re-scaling $\D$, we can assume that $(X,D)$ is a Kawamata log terminal variety
with terminal singularities.
Note that, if $|r| \ll 1$, then $D + r A$ is effective and
$(X,D + rA)$ is a Kawamata log terminal variety with terminal singularities.

We consider a small perturbation $\D + \ep A$, where $0 < \ep \ll 1$
is a rational number.
Suppose that, for $t=0$, the Mori chamber containing $(\D + \ep A)|_{X_0}$ is $\M_0$.
Note that $(\D - \ep A)|_{X_0}$ is in the interior of $\M_0'$,
if $\ep$ is sufficiently small.

We run a minimal model program for $(X,D + \ep A)$ over $T$.
This gives a sequence of flips $\phi \colon X \rat Z$, ending with a log minimal model
$Z$ over $T$. By Theorem~\ref{p-1}, $\phi$ restricts to a
sequence of flips $\phi_t \colon X_t \rat Z_t$ on every fiber $X_t$ of $t$.
These maps induce isomorphisms $N^1(X_t) \to N^1(Z_t)$,
and the Mori chambers $\M_t$ are mapped, via such isomorphisms, to the
nef cones $\Nef(Z_t)$.

On $Z$ we consider the divisor $\G := \phi_*(D - \ep A)$.
If $\ep$ is sufficiently small,
then $(Z,\G)$ is a Kawamata log terminal pair
with canonical (in fact, terminal) singularities, and
the restriction of the stable base locus of $\G$ over $T$
to any fiber $Z_t$ does not contain any divisorial component.
Note also that the maps $N^1(Z/T) \to N^1(Z_t)$ are isomorphisms for every $t \in T$.
It follows by Corollary~\ref{cor:nef-value}
that the nef value of the restriction of this pair to any fiber
against any ample divisor on $Z$ is constant, if anywhere positive.
This however is not the case. Indeed, the divisor $K_{Z_t} + \G|_{Z_t}$
is ample for all $t \ne 0$, and is not nef if $t = 0$.
This gives a contradiction, and hence completes the proof of the theorem.
\end{proof}

As we shall see in the next section, the invariance of the Mori
chamber decomposition when $X_0$ is a toric variety
is just a hint of a much stronger rigidity property.

\section{Rigidity properties of toric Fano varieties}

This last section is devoted to the proof of the following result.
Throughout the proof, all divisors will be chosen in such a way that
the restrictions considered throughout are well defined.

\begin{thm}\label{thm:rigidity-of-toric}
Simplicial toric Fano varieties with at most terminal singularities
are rigid under small projective flat deformations.
\end{thm}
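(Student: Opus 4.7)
The plan is to leverage Cox's construction for toric varieties. Since $X_0$ is a simplicial toric Fano with terminal singularities, its total coordinate ring $\Cox(X_0)=\C[x_1,\dots,x_d]$ is a polynomial ring graded by $\Cl(X_0)$, where $x_i$ is the canonical section cutting out the $i$-th torus-invariant prime divisor $D_{i,0}$, and $X_0$ is recovered as the GIT quotient of $\Spec\Cox(X_0)$ by the Picard torus $G=\Hom(\Cl(X_0),\C^*)$ with respect to the irrelevant ideal $B_0$. I would show that this entire structure lifts across any small projective flat deformation $f\colon X\to T$ over a smooth pointed curve $(T,0)$ with $X_0=f^{-1}(0)$, yielding an isomorphism $X\cong X_0\times T$ after possibly shrinking $T$.

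After shrinking $T$, Corollary~\ref{cor:factoriality} and Proposition~\ref{thm:deform-log-fanos} ensure that $X$ is $\Q$-factorial and every fiber $X_t$ is a $\Q$-factorial Fano variety with terminal singularities; a finite \'etale base change together with Proposition~\ref{p-N_1} then allows me to assume that $\Cl(X/T)\to\Cl(X_t)$ is an isomorphism for every $t\in T$. For each $i$, the closure $D_i\subset X$ of $D_{i,0}$ is then a $\Q$-Cartier Weil divisor, and the key step is to lift the canonical section $x_i\in H^0(X_0,\O_{X_0}(D_{i,0}))$ to $\tilde x_i\in H^0(X,\O_X(D_i))$ via the Weil-divisor extension theorem (Theorem~\ref{t-extf}). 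To set up its hypotheses, I would pick a general auxiliary $H\in|-mK_X|$ with $m\gg 0$ and an auxiliary $\Q$-divisor $D$ supported on $H$ and the $D_j$'s with small coefficients so that $(X_0,D|_{X_0})$ is Kawamata log terminal with terminal singularities, $\lfloor D|_{X_0}\rfloor=0$, $D|_{X_0}-aK_{X_0}$ is ample for some $a>-1$, and $D_i\equiv k(K_X+D)|_{X_0}$ for some $k>1$; since $-K_{X_0}$ is ample, such a choice is available. Running the same extension argument on arbitrary homogeneous pieces then shows that, after further shrinking $T$, $h^0(X_t,L|_{X_t})$ is constant in $t$ for every $L\in\Cl(X/T)$, and the $\tilde x_i$ assemble into an isomorphism of $\Cl(X_0)$-graded $\O_T$-algebras
$$
\O_T[x_1,\dots,x_d]\;\xrightarrow{\sim}\;\bigoplus_{[L]\in\Cl(X/T)}f_*\O_X(L).
$$

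Finally, I would recover the family by GIT. The irrelevant ideal $B_0$ is determined purely by the $\Cl(X_0)$-grading and the ample cone of $X_0$, both of which are preserved across $T$; constancy of the ample cone is Theorem~\ref{thm:mori-chamber-dec}(iii), which applies precisely because $X_0$ is toric. Consequently, the GIT quotient of $\Spec_T\O_T[x_1,\dots,x_d]$ by $G$ with respect to the flat family of irrelevant ideals agrees fiberwise with $X$, giving $X\cong X_0\times T$ and hence rigidity. The main obstacle is the extension-and-constancy step: the divisors $D_i$ are typically non-Cartier, which is exactly why the Weil-divisor generality of Theorem~\ref{t-extf} is indispensable, and one must verify that terminality of $X_0$ prevents spurious sections from appearing on neighboring fibers, since without this hypothesis the conclusion fails (cf.\ the weak/canonical counterexamples alluded to in the introduction).
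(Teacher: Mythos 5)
Your strategy is essentially the one the paper uses: identify the Cox ring of $X_0$ as a polynomial ring, show that the class group and the sections extend across the deformation, and conclude that the total coordinate ring deforms trivially, recovering the fibers. The paper recovers $X_t$ from the section ring of a relatively ample line bundle via $\Proj$, whereas you propose a GIT quotient (for which you would need the constancy of the ample chamber, i.e.\ Theorem~\ref{thm:mori-chamber-dec}(iii)); this is a cosmetic difference.

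There is, however, a genuine gap in the step where you assert that ``a finite \'etale base change together with Proposition~\ref{p-N_1}'' gives an isomorphism $\Cl(X/T)\to\Cl(X_t)$ for every $t$. Proposition~\ref{p-N_1} is a statement about the N\'eron--Severi spaces $N^1$ and $N_1$ with \emph{real} coefficients; it controls numerical classes of $\Q$-Cartier divisors, not the class group. On a $\Q$-factorial toric variety the class group is a finitely generated abelian group which can have torsion, and the Cox ring is graded by all of $\Cl(X_0)$, including its torsion part. Passing from constancy of $N^1$ to constancy of $\Cl$ is a real piece of work: the paper devotes a separate lemma (Lemma~\ref{lem:Cl}) to it, cutting $X$ down to a smooth surface family, invoking Verdier's topological local triviality, the vanishing $H^1(\O_{S_t})=0$ to control $\Pic(S_t)\hookrightarrow H^2(S_t,\Z)$, and the Grothendieck--Lefschetz theorem for normal projective varieties (Ravindra--Srinivas) to transfer back to $\Cl(X_t)$. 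None of this is a formal consequence of Proposition~\ref{p-N_1}, and without it the grading on your deformed Cox ring is not even defined.

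A second, lesser, over-claim: you assert an isomorphism of $\O_T$-algebras $\O_T[x_1,\dots,x_d]\xrightarrow{\ \sim\ }\bigoplus_{[L]} f_*\O_X(L)$ on the nose. What the extension theorem and the semicontinuity argument actually give (and what the paper proves) is that the map $\Phi_t\colon\C[x_1,\dots,x_r]\to R_t$ is bijective for $t$ very general (the open sets on which injectivity/surjectivity in bounded degree hold shrink as the degree grows, and one only controls their countable intersection). The paper then closes the argument by observing that $X_t\cong X_0$ for infinitely many $t\in T$, which suffices for local triviality of the family. If you want a clean $\O_T$-algebra isomorphism over an honest open neighborhood of $0$ you would need an additional finiteness or Noetherianity argument that you have not supplied.
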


\begin{proof}
Let $X_0$ be a simplicial (and hence $\Q$-factorial)
toric Fano variety with at most terminal singularities,
and suppose that $f \colon X \to T$ is a projective flat deformation
of $X_0$ over a smooth pointed affine curve $T \ni 0$.
After shrinking $T$ near $0$,
we can assume that $X$ has terminal $\Q$-factorial singularities,
and that all fibers $X_t$ are terminal $\Q$-factorial Fano varieties.
By also taking a base change if necessary, we can furthermore assume that
$N^1(X/T) \to N^1(X_t)$ is an isomorphism for all $t \in T$
(cf. Proposition~\ref{p-N_1}).

\begin{lem}\label{lem:Cl}
After a suitable base change of $f$,
the restriction map $\Cl(X/T) \to \Cl(X_0)$ is an isomorphism.
\end{lem}

\begin{proof}
The morphism $f \colon X \to T$ can be extended to a morphism
$\ov f \colon \ov X \to \ov T$,
where $\ov X$ and $\ov T$ are completions of $X$ and $T$ into projective varieties.
We assume that $\ov X$ is normal.
Let $\ov S \subseteq \ov X$ be the intersection of $\dim X - 3$ general hyperplane
sections, and denote $S = \ov S \cap X$ and $S_0 = S \cap X_0$.
Since $X$ is terminal, the singular locus of $X$ has codimension $\ge 3$.
Thus we can assume that $S$ is smooth. In fact, by shrinking $T$ near $0$, we
can also assume that the restricted morphism $S \to T$ is a smooth family of surfaces.

By \cite[Proposition~12.2.5]{KM92}, the local system $\GN^1(S/T)$
has finite monodromy. After a suitable base change of $\ov f$, we may assume that
the monodromy is trivial, and hence that for every $t \in T$
the restriction map $N^1(S/T) \to N^1(S_t)$ is an isomorphism.
Observe that $H^1(\O_{S_t}) = 0$ for every $t$,
since $S_t$ is a complete intersection of hyperplane sections of $X_t$
and $H^1(\O_{X_t}) = 0$. We deduce that there is an injection
$\Pic(S_t) \inj H^2(S_t,\Z)$
whose cokernel is contained in $H^2(\O_{S_t})$, and hence is torsion free.

Since $X_0$ is toric, the class group $\Cl(X_0)$ is finitely generated;
we fix a finite set of generators.
Let $D$ be any of the selected generators, and
consider its class $\d = c_1(\O_{S_0}(D|_{S_0})) \in H^2(S_0,\Z)$.
Note that, for some integer $m \ge 1$,
there is a line bundle $\LL$ on $X$ such that $c_1(\LL|_{S_0}) = m\d$.
Since $S \to T$ is smooth, and thus topologically
locally trivial, for every $t \in T$
there is an isomorphism $h_\g \colon H^2(S_0,\Z) \to H^2(S_t,\Z)$,
possibly depending on a path $\g$ joining $0$ to $t$.
By construction, we have $c_1(\LL|_{S_t}) = h_\g(m\d)$,
and thus $h_\g(m\d)$ is in the image of $\Pic(S_t)$.
Since the cokernel of $\Pic(S_t) \inj H^2(S_t,\Z)$ is torsion free, this implies that
$h_\g(\d)$ is in the image of $\Pic(S_t)$.
We fix a divisor $B_t$ on $S_t$ such that $c_1(\O_{S_t}(B_t)) = h_\g(\d)$.
Using an Hilbert space argument, we conclude
that, if $t$ is very general, then the divisor $B_t$ belongs to a (one
dimensional) algebraic family dominating $T$. After taking a base change,
we can assume that $B_t$ moves in a family parametrized by $T$.
We obtain in this way a divisor $B$ on $S$ such that
$c_1(\O_{S_0}(B|_{S_0})) = c_1(\O_{S_0}(D|_{S_0}))$, and thus
$B|_{S_0} \sim D|_{S_0}$.

Since in this process we are only considering finitely many divisors
(namely, the selected generators of $\Cl(X_0)$),
we conclude that, after a suitable finite base change,
the image of the restriction map $\Cl(S/T) \to \Cl(S_0)$ contains the image of
$\Cl(X_0) \to \Cl(S_0)$. Note that both restriction maps are injective:
the injectivity of $\Cl(S/T) \to \Cl(S_0)$ follows by the fact that $S \to T$ is
a smooth morphism to an affine curve,
and the injectivity of $\Cl(X_0) \to \Cl(S_0)$ by the main theorem of \cite{RS06}.
Taking closure in $\ov S$ of divisors on $S$ gives a splitting of the surjection
$\Cl(\ov S/\ov T) \to \Cl(S/T)$.
Moreover, applying again the main theorem of \cite{RS06}, we see that the restriction map
$\Cl(\ov X) \to \Cl(\ov S)$ is an isomorphism.
Altogether, we have a commutative diagram
$$
\xymatrix{
\Cl(\ov X/\ov T) \ar[d]_\cong \ar[r] &\Cl(X/T) \ar[d]\ar[r] &\Cl(X_0) \ar@{^{(}->}[d] \\
\Cl(\ov S/\ov T) \ar@{->>}[r] &\Cl(S/T) \ar[r]^(.32){\cong} \ar@/_15pt/[l]
&\im\big(\Cl(S/T) \to \Cl(S_0)\big),
}
$$
which shows that the map $\Cl(X/T) \to \Cl(S/T)$ is surjective.
One observes that this map is also injective (and hence an isomorphism),
since, again by \cite{RS06}, it induces an injection
$\Cl(X_t) \to \Cl(S_t)$ for every $t \in T$.
We conclude by the diagram that $\Cl(X/T) \to \Cl(X_0)$ is an isomorphism.
\end{proof}

We consider the total coordinate ring $R_0$ of $X_0$ (cf. \cite{Cox}).
This ring, which is defined in terms of the combinatorial data
attached to the fan defining the toric variety $X_0$, can
equivalently be described as
$$
R_0 = \bigoplus_{[D] \in \Cl(X_0)} H^0(\O_{X_0}(D)).
$$
This is a polynomial ring, with product compatible with the
multiplication maps
$$
H^0(\O_{X_0}(D)) \otimes H^0(\O_{X_0}(D')) \to H^0(\O_{X_0}(D+D')).
$$
If $\S$ is the fan attached to the toric variety, then
$$
R_0 = \C[x_{0,1},\dots,x_{0,r}],
$$
where each variable $x_{0,i}$ corresponds to a ray of $\S$
and is identified with the primitive generator of the ray, which
defines a toric invariant divisor on $X_0$. We will denote
such divisor by $\Div(x_{0,i})$.

By the lemma, after suitable base change, the restriction map
$\Cl(X/T) \to \Cl(X_0)$ is an isomorphism. We consider the ring
$$
R_t = \bigoplus_{[A] \in \Cl(X/T)} H^0(\O_{X_t}(A)).
$$
Note that for $t=0$ this gives the ring $R_0$ previously defined.

There is a natural $\Cl(X_0)$-grading on the ring $R_0$ (cf. \cite{Cox}), or equivalently,
a $\Cl(X/T)$-grading. We fix an ample divisor $H$ on $X$. By taking intersections
with $H^{\dim X_0 -1}$, we obtain a $\Z$-grading on $\Cl(X_t)$ for each $t$, and hence
on $\Cl(X/T)$. For every divisor $A$ on $X$ and every $t$, we denote
$$
\deg(A) = \deg (A|_{X_t}) := A|_{X_t} \.H^{\dim X_t-1}.
$$
This gives a $\Z$-grading on each $R_t$.
According to this grading, $R_0$ is a (positively) weighted polynomial ring. We set
$$
m = \min_{1\le i \le r} \deg(\Div(x_{0,i})),\quad
M = \max_{1\le i \le r} \deg(\Div(x_{0,i})).
$$

Since $-K_X$ is relatively ample over $T$,
it follows by Theorem~\ref{t-extf} that, for every $[A] \in \Cl(X/T)$
and every $t \in T$, the restriction map $H^0(\O_X(A)) \to H^0(\O_{X_t}(A))$
is surjective. For every integer $d \ge 0$, we consider the locally free sheaf
$$
\E^{\le d} := \bigoplus_{\substack{[A]\in \Cl(X/T) \\ \deg (A) \le d}} f_*\O_X(A).
$$
Let $E^{\le d}$ be the associated vector bundle. The fiber of
$E^{\le d}$ over $t \in T$ is given by
$$
E^{\le d}|_t := \bigoplus_{\substack{[A]\in \Cl(X/T) \\ \deg (A) \le d}}H^0(\O_{X_t}(A)),
$$
which is a direct summand of $R_t$.
Note that $E^{\le d}$ is a trivial vector bundle on $T$.

The elements $x_{0,i} \in R_0$, each thought as a section of the appropriate
sheaf, deform away from the central fiber, to elements $x_{t,i} \in R_t$.
For every $t \in T$, we consider the homomorphism of $\C$-algebras
$$
\Phi_t \colon \C[x_1,\dots,x_r] \to R_t, \quad x_i \mapsto x_{t,i}.
$$
We claim that this map is an isomorphism of $\C$-algebras for infinitely many $t \in T$.
Note that, to this end, it suffices to check that $\Phi_t$ is bijective.

For every $e,d \ge 0$, the homomorphism $\Phi_t$ induces a linear map of vector spaces
$$
\Phi_t^{e,d} \colon \C[x_1,\dots,x_r]^{\le e}
\to \bigoplus_{\substack{[A]\in \Cl(X/T) \\ \deg (A) \le d}}H^0(\O_{X_t}(A)),
$$
where $\C[x_1,\dots,x_r]^{\le e}$ is the subspace of $\C[x_1,\dots,x_r]$ generated
by the monomials of degree $\le e$. As $t$ varies, we obtain a map of vector bundles
$$
\Phi^{e,d} \colon T\times \C[x_1,\dots,x_r]^{\le e} \to E^{\le d}.
$$
Fix an arbitrary $e \ge 1$.
Since the map $\Phi^{e,Me}_0$ is injective, we have that $\Phi^{e,Me}_t$ is injective
for every $t$ in an open neighborhood of $0$.
Similarly, the map $\Phi^{e,me}_0$ is surjective,
and thus $\Phi^{e,me}_t$ is surjective for $t$ in an open neighborhood of $0$.
We conclude that $\Phi_t$
is a bijection for infinitely many values of $t \in T$ (in fact, for $t$ very general).

Therefore $R_t$ is isomorphic to a polynomial ring for infinitely many $t \in T$,
and we obtain an isomorphism of rings (depending on the choice of the
extensions $x_{t,i}$ of the sections $x_{0,i}$) between $R_t$ and $R_0$.
In particular, if $L$ is a relatively ample divisor on $X$,
then this isomorphism maps
the subring $\bigoplus_{m \ge 0}H^0(\O_{X_t}(mL))$ of $R_t$
to the subring $\bigoplus_{m \ge 0}H^0(\O_{X_0}(mL))$ of $R_0$.
This establishes an isomorphism between such subrings,
and therefore we obtain an isomorphism
$$
X_t = \Proj\Big(\bigoplus_{m \ge 0}H^0(\O_{X_t}(mL))\Big)
\cong \Proj\Big(\bigoplus_{m \ge 0}H^0(\O_{X_0}(mL))\Big) = X_0
$$
for infinitely many values of $t \in T$,
which shows that the deformation is locally trivial.
\end{proof}

\begin{rmk}\label{rmk:F_2-toric}
Using the degenerations of $\P^1\times\P^1$ into a quadric cone or $\F_2$ discussed in
Remark~\ref{rmk:F_2}, one sees immediately that the theorem
fails for smooth toric varieties that are just weak Fano or log Fano,
or if the singularities are canonical but not terminal.
The degenerations studied in \cite{Bat04} show that the theorem
fails for toric Fano varieties with terminal singularities that
are not simplicial (that is, not $\Q$-factorial).
\end{rmk}

\begin{rmk}
The arguments of the proof of Theorem~\ref{thm:rigidity-of-toric} also show that if
$f \colon X \to T$ is a projective flat deformation
of a Fano variety $X_0$ with $\Q$-factorial terminal singularities,
then the total coordinate ring $R(t)$ of a nearby fiber $X_t$
is a flat deformation of the total coordinate ring $R(0)$ of $X_0$.
\end{rmk}


\begin{thebibliography}{BCFKvS00}

\bibitem[AB04]{AB04}
V. Alexeev and M. Brion,
{\it Toric degenerations of spherical varieties.}
Sel. Math., New Ser. {\bf 10} (2004), 453--478.

\bibitem[Amb99]{Ambro}
F. Ambro. {\it On minimal log discrepancies.}
Math. Res. Lett. {\bf 6} (1999), 573--580

\bibitem[Bat04]{Bat04}
V. Batyrev,
{\it Toric Degenerations of Fano Varieties and Constructing Mirror Manifolds.}
A. Collino (ed.) et al.,
The Fano conference. Papers of the conference organized to commemorate
the 50th anniversary of the death of Gino Fano (1871--1952),
Torino, Italy, September 29--October 5, 2002.
(2004), 109--122.

\bibitem[BCFKvS00]{BCFKvS00}
V. Batyrev, I. Ciocan-Fontanine, B. Kim and D. van Straten,
{\it Mirror symmetry and toric degenerations of partial flag manifolds.}
Acta Math. {\bf 184} (2000), 1--39.

\bibitem[BB96]{BB96}
F. Bien and M. Brion,
{\it Automorphisms and local rigidity of regular varieties.}
Compositio Math. {\bf 104} (1996), 1--26.

\bibitem[BCHM06]{BCHM}
C. Birkar, P. Cascini, C. D. Hacon and J. M$^{\rm c}$Kernan.
{\it Existence of minimal models for varieties of general type}.
Preprint.

\bibitem[Bou04]{Bou04}
S. Boucksom,
{\it Divisorial Zariski decompositions on compact complex manifolds.}
Ann. Sci. \'Ecole Norm. Sup. {\bf 37}  (2004),  45--76.

\bibitem[Cox95]{Cox}
D. Cox,
{\it The homogeneous coordinate ring of a toric variety.}
J. Algebraic Geom. {\bf 4} (1995), 17--50.

\bibitem[dFKL08]{dFKL}
T. de Fernex, A. K\"uronya and R. Lazarsfeld,
{\it Higher cohomology of divisors on a projective variety.}
Math. Ann. {\bf 337 } (2007), 443--455.

\bibitem[GL96]{GL96}
N. Gonciulea and V. Lakshmibai,
{\it Degenerations of flag and Schubert varieties to toric varieties.}
Transform. Groups {\bf 1} (1996), 215–248.

\bibitem[Gro65]{Gro}
A. Grothendieck,
{\it \'Elem\'ents de g\'eom\'etrie alg\'ebrique. IV. \'Etude locale des
sch\'emas et des morphismes de sch\'emas. II.}
{Inst. Hautes \'Etudes Sci. Publ. Math.} (1965), no. 24, 231.

\bibitem[HM06]{HM06}
C. D. Hacon and J. M$^{\rm c}$Kernan,
{\it  Boundedness of pluricanonical maps of varieties of general type.}
Invent. Math. {\bf 166} (2006), 1--25.

\bibitem[HM07]{HM07}
C. D. Hacon and J. M$^{\rm c}$Kernan,
{\it Extension theorems and the existence of flips}
in Flips for 3-folds and 4-folds OUP 2007.

\bibitem[HM08]{HM08}
C. D. Hacon and J. M$^{\rm c}$Kernan,
{\it  Existence of minimal models for varieties of general type II: Pl-flips}.
Preprint.

\bibitem[Har77]{Har}
R. Hartshorne, {\it Algebraic Geometry},
Graduate Texts in Mathematics, No. 52, Springer-Verlag, New York, 1977.

\bibitem[HKP06]{HKP}
M. Hering, A. K\"uronya and S. Payne,
{\it Asymptotic cohomological functions of toric divisors.}
Adv. Math. {\bf 207} (2006),  634--645.

\bibitem[HMSV08]{HMSV08}
B. Howard, J. Millson, A. Snowden and R. Vakil,
{\it The equations for the moduli space of n points on the line.}
Preprint.

\bibitem[HK00]{HK}
Y. Hu and S. Keel,
{\it Mori dream spaces and GIT.}
Michigan Math. J. {\bf 48} (2000), 331--348.

\bibitem[Huy97]{Huy97}
D. Huybrechts,
{\it Compact Hyperkaehler Manifolds: Basic Results.}
Preprint available at {\tt alg-geom/9705025}.

\bibitem[Huy03]{Huy03}
D. Huybrechts,
{\it The K\"ahler cone of a compact hyperk\"ahler manifold.}
 Math. Ann. {\bf 326}  (2003), 499--513.

\bibitem[Kaw99a]{Kaw99a}
Y. Kawamata,
{\it Deformations of canonical singularities.}
J. Amer. Math. Soc.  {\bf 12}  (1999),  no. 1, 85--92.

\bibitem[Kaw99b]{Kaw2}
Y. Kawamata,
{\it On the extension problem of pluricanonical forms.}
Pragacz, Piotr (ed.) et al., Algebraic geometry: Hirzebruch 70.
Proceedings of the algebraic geometry conference in honor of
F.~Hirzebruch's 70th birthday,
Stefan Banach International Mathematical Center,
Warszawa, Poland, May 11-16, 1998. Providence,
RI: American Mathematical Society.
{\it Contemp. Math.} {\bf 241} (1999), 193--207.

\bibitem[Kaw00]{Kaw00}
Y. Kawamata,
{\it On effective non-vanishing and base-point-freeness.}
Kodaira's issue.
Asian J. Math. {\bf 4} (2000), 173--181.

\bibitem[KMM87]{KMM87}
Y. Kawamata, K. Matsuda and K. Matsuki,
{\it Introduction to the Minimal Model Problem.}
Advanced Studies in Pure Math. {\bf 10}, 1987,
Algebraic Geometry, Sendai, 1985, pp. 283-360

\bibitem[Kol93]{Kollar}
J. Koll\'ar,
{\it Effective base point freeness.}
Math. Ann. {\bf 296} (1993), 595--605.

\bibitem[Kol95]{Kol95}
J. Koll\'ar,
{\it Singularities of pairs.}
Algebraic geometry---Santa Cruz 1995,  221--287,
Proc. Sympos. Pure Math., 62, Part 1, Amer. Math. Soc., Providence, RI, 1997.

\bibitem[KM92]{KM92}
J. Koll\'ar and S. Mori,
{\it Classification of three-dimensional flips.}
 J. Amer. Math. Soc.  {\bf 5}  (1992),  no. 3, 533--703.

\bibitem[KM98]{KM}
J. Koll\'ar and S. Mori,
{\it Birational Geometry of Algebraic Varieties.}
With the collaboration of C. H. Clemens and A. Corti.
Cambridge Tracts in Mathematics, 134.
Cambridge University Press, Cambridge, 1998.

\bibitem[Lak95]{Lak95}
V. Lakshmibai,
{\it Degenerations of flag varieties to toric varieties.}
C. R. Acad. Sci. Paris Ser.I Math. {\bf 321} (1995), 1229-–1234.

\bibitem[Laz04]{Laz}
R. Lazarsfeld,
{\it Positivity in Algebraic Geometry. I.
-- Classical Setting: Line Bundles and Linear Series.}
Ergebnisse der Mathematik und ihrer Grenzgebiete. 3.
Folge. A Series of Modern Surveys in Mathematics, 48.
Springer-Verlag, Berlin, 2004.

\bibitem[Min01]{Min01}
T. Minagawa,
{\it Deformations of weak Fano 3-folds with only terminal singularities.}
Osaka J. Math. {\bf 38} (2001), 533--540.

\bibitem[Nak04]{Nak04}
N. Nakayama, {\it Zariski-decomposition and abundance.}
MSJ Memoirs, {\bf 14}. Mathematical Society of Japan, Tokyo, 2004.

\bibitem[Nam97]{Nam97}
Y. Namikawa,
{\it Smoothing Fano $3$-folds.}
J. Algebraic Geom. {\bf 6} (1997), 307--324.

\bibitem[RS06]{RS06}
G.V. Ravindra and V. Srinivas,
{\it The Grothendieck-Lefschetz theorem for normal projective varieties.}
J. Algebraic Geom.  15  (2006), 563--590.

\bibitem[Siu98]{Siu98}
Y.-T. Siu,
{\it Invariance of plurigenera.}
Invent. Math. {\bf 134} (1998), no. 3, 661--673.

\bibitem[Siu02]{Siu02}
Y.-T. Siu,
{\it Extension of twisted pluricanonical sections
with plurisubharmonic weight and invariance of
semipositively twisted plurigenera for manifolds not necessarily of general type.}
{\it Complex Geometry} (G\"ottingen, 2000), 223--277, Springer, Berlin, 2002.

\bibitem[Str93]{Str93}
B. Sturmfels, {\it Algorithms in Invariant Theory, Texts and Monographs in Symbolic
Computation.} Wien, Springer-Verlag, 1993.

\bibitem[Str95]{Str95}
B. Sturmfels, {\it Gr\"obner Bases and Convex Polyhedra.} American Mathematical
Society, University Lecture Series, Vol. 8, Providence, RI, 1995.

\bibitem[Tak07]{Tak07}
S. Takayama, {\it Pluricanonical systems on algebraic varieties of general type.}
Invent. Math. 165 (2006), 551--587.

\bibitem[Tot09]{Tot09}
B. Totaro, {\it Jumping of the nef cone for Fano varieties.}
Preprint 2009.

\bibitem[Ver76]{Ver}
J.-L. Verdier,
{\it Stratifications de Whitney et th\'eor\`eme de Bertini--Sard.}
Invent. Math. {\bf 36} (1976), 295--312.

\bibitem[Wi\'s91]{Wis}
J. Wi\'sniewski, {\it On deformation of nef values.}
Duke Math. J. {\bf 64} (1991), 325--332.

\bibitem[Wi\'s08]{Wis2}
J. Wi\'sniewski,
{\it Rigidity of Mori cone for Fano manifolds.}
Preprint.



\end{thebibliography}
\end{document}